\newcommand{\cC}{{C}}
  \def\sw#1{{\sb{(#1)}}} 
  \def\sco#1{{\sb{(#1)}}}
  \def\sut#1{{\sp{\langle #1\rangle}}}
  \def\<{{\langle}} 
  \def\>{{\rangle}}
  \def\eps{\varepsilon}
  \def\note#1{{}} 
 \def\can{{\rm \textsf{can}}}
  \def\note#1{} 
  \def\M{{\bf M}}
  \def\ev{{\rm ev}}
  \def\cC{{\mathcal C}} 
  \def\cD{{\mathcal D}}
  \def\lhom#1#2#3{{}{{\rm Hom}\sb{#1}(#2,#3)}} 
  \def\rhom#1#2#3{{{\rm Hom}\sb{#1}(#2,#3)}}
  \def\rend#1#2{{{\rm End}\sb{#1}(#2)}} 
    \def\Map{\mathrm{Map}}
  \def\Lend#1#2{{{\rm End}\sp{#1}(#2)}} 
  \def\Rend#1#2{{{\rm End}\sp{#1}(#2)}}
  \def\beq{\begin{equation}} 
  \def\eeq{\end{equation}} 
  \def\DC{{\Delta_\cC}} 
  \def \eC{{\eps_\cC}} 
  \def\DD{{\Delta_\cD}} 
  \def \eD{{\eps_\cD}}
  \def\im{{\rm Im}}
  \def\ot{{\otimes}} 
  \def\Hom{\mbox{\rm Hom}\,} 
  \def\Endd{\mbox{\rm End}\,}
  \newcommand{\Lra}{\Leftrightarrow} 
  \newcommand{\Ra}{\Rightarrow}
  \def\he{\hat{e}}
  \def\hc{\hat{c}}
    \def\hT{\widehat{T}}
    \def\hC{\widehat{C}}
    \def\hS{\widehat{S}}
    \def\hP{\widehat{P}}
    \def\hV{\widehat{V}}
       \def\hTheta{\widehat{\Theta}}
  \def \hlambda{\widehat{\lambda}}
    \def\ev{\mathrm{ev}}
   \def \hev{\widehat{\ev}}
    \def \hx{\widehat{x}}
        \def \he{\widehat{e}}
\def \hy{\widehat{y}}
        \def \hz{\widehat{z}}
    \def\balpha {\bar{\alpha}}
     \def\fS{{\mathfrak S}}
  \def\fR{{\mathfrak R}}
  \def\hrho{\hat{\varrho}}
  \def\hlambda{\hat{\lambda}}
 \def\hF{\widehat{F}}
 \def\otA{\ot}
  \newcounter{zlist} 
  \newenvironment{zlist}{\begin{list}{(\arabic{zlist})}{ 
  \usecounter{zlist}\leftmargin2.5em\labelwidth2em\labelsep0.5em 
  \topsep0.6ex%\itemsep0.3ex plus0.2ex minus0.3ex 
  \parsep0.3ex plus0.2ex minus0.1ex}}{\end{list}}
  \newcounter{blist} 
  \newenvironment{blist}{\begin{list}{(\alph{blist})}{ 
  \usecounter{blist}\leftmargin2.5em\labelwidth2em\labelsep0.5em 
  \topsep0.6ex %\itemsep0.3ex plus0.2ex minus0.3ex 
  \parsep0.3ex plus0.2ex minus0.1ex}}{\end{list}} 
  \newcounter{rlist} 
  \newenvironment{rlist}{\begin{list}{(\roman{rlist})}{ 
  \usecounter{rlist}\leftmargin2.5em\labelwidth2em\labelsep0.5em 
  \topsep0.6ex %\itemsep0.3ex plus0.2ex minus0.3ex 
  \parsep0.3ex plus0.2ex minus0.1ex}}{\end{list}}
\def\stac#1{\raise-.2cm\hbox{$\stackrel{\displaystyle\otimes}{\scriptscriptstyle{#1}}$}}
\def\cten#1{\raise-.2cm\hbox{$\stackrel{\displaystyle\widehat{\otimes}}
{\scriptscriptstyle{#1}}$}}
  \def\Label#1{\label{#1}\ifmmode\llap{[#1] }\else 
  \marginpar{\smash{\hbox{\tiny [#1]}}}\fi} 
  \def\Label{\label}
  \newtheorem{proposition}{Proposition}[section]
  \newtheorem{lemma}[proposition]{Lemma} 
  \newtheorem{corollary}[proposition]{Corollary} 
  \newtheorem{theorem}[proposition]{Theorem} 
  \theoremstyle{definition} 
  \newtheorem{definition}[proposition]{Definition}
  \newtheorem{example}[proposition]{Example} 
    \newtheorem{notation}[proposition]{Notation}
  \theoremstyle{remark} 
  \newtheorem{remark}[proposition]{Remark} 
  \newtheorem{remarks}[proposition]{Remarks}
  \newcounter{c} 
  \renewcommand{\[}{\setcounter{c}{1}$$} 
  \newcommand{\etyk}[1]{\vspace{-7.4mm}$$\begin{equation}\Label{#1} 
  \addtocounter{c}{1}} 
  \renewcommand{\]}{\ifnum \value{c}=1 $$\else \end{equation}\fi} 
\newcommand{\cov}{{\rm cov}}
\newcommand{\ocov}{\ol{\rm cov}}
\newcommand{\oX}{\ol{X}}
\newcommand{\oT}{\ol{T}}
\def\ot{\otimes}
\newcommand{\Cc}{\mathcal{C}}
\newcommand{\Dd}{\mathcal{D}}
\newcommand{\Ee}{\mathcal{E}}
\newcommand{\Ff}{\mathcal{F}}
\newcommand{\Gg}{\mathcal{G}}
\newcommand{\Hh}{\mathcal{H}}
\def\rightact{\hbox{$\leftharpoonup$}}
\def\leftact{\hbox{$\rightharpoonup$}}
\def\*C{{}^*\hspace*{-1pt}{\Cc}}
\def\text#1{{\rm {\rm #1}}}
\def\ol{\overline}
\begin{document} 

 \title{Bimodule herds} 
 \author{Tomasz Brzezi\'nski}
 \address{ Department of Mathematics, Swansea University, 
  Singleton Park, \newline\indent  Swansea SA2 8PP, U.K.} 
  \email{T.Brzezinski@swansea.ac.uk}   
  \author{Joost Vercruysse}
   \address{ Department of Mathematics, Swansea University, 
  Singleton Park, \newline\indent  Swansea SA2 8PP, U.K.
  \newline  and \newline\indent
Faculty of Engineering, Vrije Universiteit Brussel (VUB), \newline\indent B-1050 Brussels, Belgium}
\email{jvercruy@vub.ac.be}

    \date{June 2008} 
%  \subjclass[2000]{58B34} 
  \begin{abstract} 
  The notion of a {\em bimodule herd} is introduced and studied. A bimodule herd consists of a $B$-$A$ bimodule, its formal dual, called a {\em pen}, and a map, called a {\em shepherd}, which satisfies unitality and coassociativity conditions. It is shown that every bimodule herd gives rise to a pair of corings and coactions. If, in addition, a bimodule herd is {\em tame} i.e.\ it is faithfully flat and a progenerator, or if it is a progenerator and the underlying ring extensions are split, then these corings are associated to entwining structures; the bimodule herd is a Galois comodule of these corings. The notion of a {\em bicomodule coherd} is introduced as a formal dualisation of the definition of a bimodule herd. Every bicomodule coherd defines a pair of (non-unital) rings. It is shown that a tame $B$-$A$ bimodule herd defines a bicomodule coherd, and sufficient conditions for the derived rings to be isomorphic to $A$ and $B$ are discussed. The composition of bimodule herds via the tensor product is outlined. The notion of a bimodule herd is illustrated by the example of Galois co-objects of a commutative, faithfully flat Hopf algebra.
 \end{abstract} 
  \maketitle 

\section{Introduction}
In classical geometry a torsor or a principal homogenous space is a $G$-set $X$ on which the group $G$ acts transitively and freely. Equivalently, torsors can be defined as sets, termed {\em herds} (also called torsors), $X$ with  a structure mapping $X\times X\times X\to X$ satisfying some axioms; see \cite[page 170]{Pru:the},  \cite[page 202, footnote]{Bae:ein}.
In this formulation, the group $G$ is derived rather than given from the onset. This reconstruction of a group $G$ from the axioms of herds is standard and well-known. Perhaps less known is that, to a principal homogenous space one can also associate a groupoid, known as the {\em Ehresmann} or {\em gauge groupoid}; see \cite[Example~1.1.5]{Mac:gen}. If $G$ acts on $X$ from the right, the gauge groupoid acts from the left.

Both these points of view on herds and torsors together with the reconstructions of groups and groupoids are present in non-commutative geometry. On one hand non-commutative principal homogeneous spaces are represented by (faithfully flat) Hopf-Galois extensions or, more generally, coalgebra-Galois extensions. On the other hand the Hopf-algebra-free notion of a {\em quantum torsor} was introduced by Grunspan in  \cite{Gru:tor}. That a faithfully flat quantum torsor  is the same as a faithfully flat Galois object was observed in \cite{Sch:tor}. Independently, the notion of a {\em quantum heap} was proposed by \v Skoda \cite{Sko:hea}, and it has been shown that the category of copointed quantum heaps (i.e.\ quantum heaps with a specified character) is isomorphic to the category of Hopf algebras; this gives the way of reconstructing a Hopf algebra from a quantum heap. The gauge groupoid associated to a Hopf-Galois extension or the {\em Ehresmann-Schauenburg bialgebroid} was constructed and led to the development of bi-Galois theory in \cite{Sch:big}.  The Ehresmann coring for coalgebra-Galois extensions is described in \cite[pp.\ 392-3]{Brz:str}. The need to describe  Hopf-Galois extensions led to introduction of {\em $B$-torsors}  in  \cite{Sch:tor} (cf.\ \cite[Section~2.8]{Sch:Hop}), while the fully symmetric Hopf-bi-Galois theory necessitated studies of  {\em $A$-$B$ torsors}  in \cite[Chapter~5]{Hobst:PhD}. The most recent step in the approach to describing Galois-type extensions in terms of torsors was made in \cite{BohBrz:pre}, where (faithfully flat) bi-Galois objects for coring extensions were described in terms of (faithfully flat) {\em pre-torsors}.

In all these algebraic approaches to torsors, a non-commutative torsor or a non-commutative principal bundle or a Galois-type extension is assumed to be an algebra with additional structure. Yet, {\em Galois comodules} for corings \cite{ElKGom:com} have been recently shown to be an effective, general and unifying framework for the  Hopf- and coalgebra-Galois theory; see \cite{Brz:Gal}, \cite{Wis:Gal}, \cite{BohVer:Mor}. The aim of the present paper is to introduce and study {\em bimodule herds}, i.e.\ torsor-like objects that are not assumed to be algebras, and to show their close relationship to Galois comodules. By using the terminology which refers to the older notion of herds (or flocks) on sets\footnote{The term {\em herd} is the English translation of  the German {\em Schar} of Pr\"ufer \cite{Pru:the} and Baer \cite{Bae:ein}, advocated by Johnstone in \cite{Joh:her}.}, we want to stress that objects we study are no longer algebras (and hence are more general than previously studied torsors). At the same time we avoid a term {\em torsor} which might have been used in too many different contexts. On the other hand, as we will mention later and show in the last section of this paper, our notion in an abstract sense unifies the classical notion of herds or torsors with the non-commutative torsors.

We begin in Section~\ref{se.torsors} by defining what a bimodule herd is. The definition of a bimodule herd involves a bimodule and its dual. To keep the situation completely symmetric rather than considering one-sided duals with apriori no clear reason which side should be preferred, we consider a {\em formal dual} as given by evaluation and coevaluation maps in Definition~\ref{def.dual}. This is very reminiscent of Morita contexts and we discuss this relationship, by relating surjectivity of (co)evaluation maps with progenerator properties. Next we define a (tame) bimodule herd as a bimodule with a formal dual, called a {\em pen},  and a unital and coassociative structure map, called a {\em shepherd},  in Definition~\ref{def.torsor}. It is shown that in this very general setup one can associate two corings to a bimodule herd; see Corollary~\ref{cor.coring}. These corings can be understood as  ``gauge corings"{} associated to a bimodule herd. The first main result of Section~\ref{se.torsors}, Theorem~\ref{thm.entw}, reveals that in the tame case, each of these corings comes from an {\em entwining structure}. Thus, although a bimodule herd is defined by purely module-theoretic means and its definition makes no use of coalgebraic notions, tame bimodule herds are a source of corings and entwining structures. In fact Theorem~\ref{thm.torsor.galois} and Remark~\ref{rem.torsor.galois} show that tame bimodule herds can  be identified with finite Galois comodules of corings associated to entwining structures. The approach to Galois theory through bimodule herds is fully left-right symmetric and hence lays foundations for the theory of {\em bi-Galois comodules}.

In Section~\ref{se:co-torsors} we formally dualise the notion of a bimodule herd, and define bicomodule coherds. These are bicomodules of two corings with a counital and associative structure map. Although bicomodule coherds might seem at first as a mere dualisation of bimodule herds, the main reason for their introduction is revealed in Theorem~\ref{thm.cotor}, where it is shown how a coherd can be associated to a tame bimodule herd. 

Section~\ref{se:composition} is devoted to the description of ways in which two bimodule herds can be composed. It turns out that the composition via the tensor product is possible whenever the associated corings can form a smash coproduct. Various facets of bimodule herds discussed in this paper are illustrated in Section~\ref{se:co-object} by {\em Galois co-objects} for (commutative) Hopf algebras. As these objects are not algebras, even  in this simple case, the use of bimodule herds (rather than previously studied torsor and pre-torsor algebras) becomes inevitable. The composition of Galois co-objects is shown to coincide with the composition of corresponding bimodule herds. 
 
The paper is supplemented with an appendix, in which we describe a categorical formulation of bimodule herds. This is in-line with recent resurgence of interest in categorical aspects of module and comodule theory, and also indicates a categorical framework which unifies bimodule herds with standard geometric (or set-theoretic) herds.

\subsection*{Notation}
Throughout the paper, $\M_R$ denotes the category of right $R$-modules and right $R$-linear maps where $R$ is a unital associative ring. Similarly, we use notations ${_R\M}$ for the category of left $R$-modules and $\M^C$ for the category of right comodules of an $R$-coring $C$. For an $R$-coring $C$, $\Delta_C$ denotes the coproduct and $\eps_C$ denotes the counit. We refer to \cite{AndFul:rin} and \cite{BrzWis:cor} for comprehensive introductions. If $X$ is an object in a category, then  $X$ is also used to denote the identity morphism on $X$. 
Simple tensors often represent a finite sum of simple tensors.

In Sections~\ref{se.torsors}-\ref{se:composition}, $R$ and $S$ are unital associative rings, and 
$$
\alpha: R\to A, \qquad \beta: S\to B,
$$
are maps of associative unital rings. All $A$-, respectively $B$-modules are understood as $R$-, respectively $S$-modules via $\alpha$, respectively $\beta$.

\section{Bimodule herds}\label{se.torsors}
In this section the definition and fundamental properties of bimodule herds, including their relationship with Galois comodules, are given.

\subsubsection*{Formal duals} In the definition of a bimodule herd one needs to use a bimodule and its dual. We formalise this by introducing the notion of a {\em formal dual}. This might be well-known to ring and module theorists; the definition and basic properties of formal duals are included for completeness and for fixing the notation.
\begin{definition}\label{def.dual}
Let $T$ be a $B$-$A$ bimodule.  An $A$-$B$ bimodule $\hT$ is said to be a {\em formal dual} of $T$ if there exist an $A$-bimodule map 
$$
\ev : \hT \ot_S T\to A,
$$
and a $B$-bimodule map
$$
\hev: T\ot_R \hT \to B,
$$
rendering commutative the following diagrams:
\begin{equation}\label{diag.A}
\xymatrix{
T\ot_R \hT\ot_S T \ar[rr]^{\hev \ot_S T} \ar[d]_{T\ot_R\ev} && B\ot_ST \ar[d]\\
T\ot_R A \ar[rr] && T\, ,}
\end{equation}
\begin{equation}\label{diag.B}
\xymatrix{
\hT\ot_S T\ot_R \hT \ar[rr]^{\hT \ot_S \hev} \ar[d]_{\ev\ot_R\hT} &&\hT\ot_S B \ar[d]\\
 A\ot_R\hT  \ar[rr] && \hT\, .}
\end{equation}
Here the unlabeled arrows correspond to $A$- and $B$-multiplications on $T$ and $\hT$. 
\end{definition}

If $T$ and $\hT$ are bimodules forming  a Morita context, then $\hT$ is a formal dual of $T$. Also, if $B=\rend AT$, then  $T^* = \rhom A T A$ is a formal dual of $T$ (with $\ev$ the evaluation map, and $\hev$ the coevaluation map). 

\begin{lemma}\label{lemma.formal.dual} Let $T$ be a $B$-$A$ bimodule and $\hT$ an $A$-$B$ bimodule.
\begin{zlist}
\item Suppose that $\hT$ is a formal dual of $T$. Write $T^*$ for $\rhom ATA$ and ${}^*T$ for $\lhom BTB$.
\begin{blist}
\item The map $\hev$ is surjective if and only if $T$ is a finitely generated and projective right $A$-module, a faithful left $B$-module and the map
$$
\lambda : \hT \to T^*, \qquad \hx\mapsto [x\mapsto \ev(\hx \ot_Sx)],
$$
is an isomorphism of $A$-$S$ bimodules. If this happens, then $\hT$ is a generator as a right $B$-module, and the map
$$
T\ot_A \hT \to B, \qquad x\ot_A\hx \mapsto \hev(x\ot_R \hx),
$$
is an isomorphism of $B$-bimodules.

\item The map $\ev$ is surjective if and only if $T$ is a finitely generated and projective left $B$-module, a faithful right $A$-module and the map
$$
\hlambda : \hT \to {}^*T, \qquad \hx\mapsto [x\mapsto \hev(x \ot_R\hx)],
$$
is an isomorphism of $R$-$B$ bimodules. If this happens, then $\hT$ is a generator as a left $A$-module, and the map
$$
\hT\ot_B T \to A, \qquad \hx\ot_B x \mapsto \ev(\hx\ot_S x),
$$
is an isomorphism of $A$-bimodules.
\end{blist}
\item 
The functors $-\ot_BT:\M_B\to\M_A$ and $-\ot_A\hT:\M_A\to\M_B$ determine a pair of inverse equivalences (i.e.\ $T$ is a progenerator as right $A$-module and $B=\Endd_A(T)$) if and only if $\hT$ is a formal dual of $T$ such that both maps $\ev$ and $\hev$ are surjective.
\end{zlist}
\end{lemma}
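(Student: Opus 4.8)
The statement is a Morita-theoretic characterisation, and its real content is the equivalence between the functorial condition on the left and the existence of the two bimodule isomorphisms
$$
\tau\colon T\ot_A\hT\xrightarrow{\ \cong\ }B,\qquad
\sigma\colon\hT\ot_B T\xrightarrow{\ \cong\ }A
$$
(of $B$-bimodules, respectively $A$-bimodules), together with their Morita compatibility. The plan is therefore to isolate these two isomorphisms; the passage between ``$-\ot_B T$ and $-\ot_A\hT$ are inverse equivalences'' and the parenthetical reformulation (``$T$ a progenerator as right $A$-module and $B=\Endd_A(T)$'') I will take from the classical Morita theorem. One direction is almost entirely supplied by part~(1).

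For the implication from right to left, suppose $\hT$ is a formal dual with $\ev$ and $\hev$ both surjective. Here I would simply quote part~(1): surjectivity of $\hev$ gives, by~(1)(a), the $B$-bimodule isomorphism $\tau$, and surjectivity of $\ev$ gives, by~(1)(b), the $A$-bimodule isomorphism $\sigma$. Associativity of the tensor product then yields natural isomorphisms
$$
(-\ot_B T)\ot_A\hT\;\cong\;-\ot_B(T\ot_A\hT)\;\cong\;-\ot_B B\;=\;\mathrm{Id}_{\M_B},
$$
and, symmetrically, $(-\ot_A\hT)\ot_B T\cong\mathrm{Id}_{\M_A}$, so the two functors are inverse equivalences. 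The progenerator description is then recovered from the remaining conclusions of part~(1), which identify $\hT$ with $T^{*}$ and show $T$ to be a finitely generated projective generator with $\Endd_A(T)\cong T\ot_A T^{*}\cong B$.

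For the converse I would start from inverse equivalences and first replace the given equivalence by an adjoint equivalence $F=-\ot_B T\dashv G=-\ot_A\hT$, with unit $\eta$ and counit $\epsilon$ both natural isomorphisms. Evaluating at the regular bimodules, $\eta_B\colon B\to(B\ot_B T)\ot_A\hT=T\ot_A\hT$ and $\epsilon_A\colon\hT\ot_B T=(A\ot_A\hT)\ot_B T\to A$ are isomorphisms of $B$- and $A$-bimodules respectively (right linearity is being-a-morphism-in-the-category, left linearity follows from naturality with respect to the left multiplications, which are right module maps); set $\tau=\eta_B^{-1}$ and $\sigma=\epsilon_A$. Since $\alpha\colon R\to A$ and $\beta\colon S\to B$ are ring maps there are canonical surjections, and I define
$$
\hev\colon T\ot_R\hT\twoheadrightarrow T\ot_A\hT\xrightarrow{\ \tau\ }B,\qquad
\ev\colon\hT\ot_S T\twoheadrightarrow\hT\ot_B T\xrightarrow{\ \sigma\ }A,
$$
which are surjective (composites of a surjection with an isomorphism) and respect the outer bimodule actions.

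The step requiring genuine work, and the one I expect to be the main obstacle, is to verify that $\ev$ and $\hev$ obey the formal-dual axioms \eqref{diag.A} and \eqref{diag.B}. Spelled out on elements these read
$$
\hev(t\ot\hx)\,t'=t\,\ev(\hx\ot t'),\qquad
\hx\,\hev(t\ot\hy)=\ev(\hx\ot t)\,\hy,
$$
i.e.\ the associativity conditions of a Morita context. To dispatch the first, I would use that $\tau$ realises $T\ot_A\hT$ as a free rank-one $B$-bimodule: writing $u=\tau^{-1}(1)=\sum_i t_i\ot_A\hx_i$, every element of $T\ot_A\hT$ equals $bu=ub$, so every element of $T\ot_A\hT\ot_B T$ has the form $u\ot_B s$ for some $s\in T$. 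On such an element the left-hand contraction gives $\tau(u)\,s=s$, while the right-hand contraction gives $\sum_i t_i\,\sigma(\hx_i\ot_B s)$, and these agree because the triangle identity $\epsilon F\cdot F\eta=\mathrm{id}_F$ evaluated at $B$ says exactly $\sum_i t_i\,\sigma(\hx_i\ot_B s)=s$. The second axiom follows by the mirror-image argument using $\sigma^{-1}(1)\in\hT\ot_B T$ and the other triangle identity. This verifies the Morita compatibility and completes both implications.
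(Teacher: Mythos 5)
Your proposal covers only part (2) of the lemma. The statement also comprises parts (1)(a) and (1)(b), and these carry most of the mathematical weight: both directions of the equivalence between surjectivity of $\hev$ (resp.\ $\ev$) and the triple of conditions ($T$ finitely generated projective, faithful, $\lambda$ resp.\ $\hlambda$ an isomorphism), together with the supplementary conclusions that $\hT$ is a generator and that $T\ot_A\hT\cong B$, $\hT\ot_BT\cong A$. You invoke exactly these conclusions twice --- once to produce the isomorphisms $\tau$ and $\sigma$ in the ``right to left'' direction of (2), and once to recover the progenerator description --- so they cannot be quoted as external facts; they are part of what must be proved, and quoting them makes your argument circular as a proof of the full lemma. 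The paper's proof of (1)(a), which is the bulk of its argument, chooses $e_i\ot_R\he_i$ with $\hev(\sum_ie_i\ot_R\he_i)=1_B$, shows $\{e_i,\lambda(\he_i)\}$ is a dual basis using \eqref{diag.A}, exhibits explicit inverses for $\lambda$ and for $\ell:B\to\rend AT$ using \eqref{diag.B}, deduces $T\ot_A\hT\cong T\ot_AT^*\cong\rend AT\cong B$ and the generator property of $\hT$, and then runs a separate dual-basis argument for the converse implication. None of this appears in your proposal, so the gap is genuine.

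For part (2) itself your route is essentially the paper's: both directions reduce to the strict Morita context $(B,A,T,\hT)$, with $\ev$ and $\hev$ obtained by composing the Morita maps with the canonical surjections $\hT\ot_ST\to\hT\ot_BT$ and $T\ot_R\hT\to T\ot_A\hT$. Your verification of the formal-dual axioms \eqref{diag.A} and \eqref{diag.B} via the triangle identities is more explicit than the paper's one-line claim and is sound, up to one small repair: for \eqref{diag.B} the convenient generating elements are those of the form $\hy\ot_Bu$ with $u=\tau^{-1}(1_B)$ (the image of the unit $\eta_{\hT}$, an isomorphism), on which the left contraction gives $\hy\,\tau(u)=\hy$ and the right contraction gives $\hy$ by the second triangle identity $G\epsilon\circ\eta G=\mathrm{id}$; the elements $\sigma^{-1}(1_A)\ot_A\hy$ you propose do not pair directly with either triangle identity of $F\dashv G$.
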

\begin{proof}
(1)(a) Assume that the map $\hev$ is surjective and let $e_i\in T$, $\he_i\in \hT$ be such that $\hev\left(\sum_i e_i\ot_R \he_i\right) =1_B$. Then, for all $x\in T$,
$$
\sum_i e_i \lambda(\he_i)(x) = \sum_i e_i \ev(\he_i\ot_S x) = \sum_i \hev(e_i\ot_R \he_i) x =x,
$$
where the penultimate equality follows by \eqref{diag.A}. This means that $\{e_i\in T,\; \lambda(\he_i)\in T^*\}$ is a finite dual basis for the right $A$-module $T$. Using the above calculation, right $A$-linearity of $\ev$ and \eqref{diag.B} one easily verifies that the map
$$
\lambda^{-1} : T^* \to \hT, \qquad f\mapsto \sum_i f(e_i)\he_i,
$$
is the inverse of $\lambda$.  

Let $\ell : B\to \rend A T$ be defined by $b\mapsto [x\mapsto bx]$. Using diagrams \eqref{diag.A} and \eqref{diag.B} and the definition of $e_i$, $\he_i$ one can verify that
$$
\ell^{-1}:\rend AT \to B,\qquad s\mapsto \sum_i\hev(s(e_i)\ot_R \he_i),
$$
is the inverse of $\ell$. Thus, in particular, $\ell$ is injective, i.e.\ $T$ is a faithful left $B$-module. Combining $\ell$ with $\lambda$ and the fact that $T$ is a finitely generated and projective right $A$-module, the map $T\ot_A \hT \to B$, $x\ot_A \hx \mapsto \hev(x\ot_R \hx)$ is recovered as a chain of isomorphisms
$$
T\ot_A\hT \cong T\ot_A T^*\cong \rend AT \cong B.
$$
Finally, take any right $B$-module map $f: M\to N$ such that $\rhom B \hT f =0$. Then,
in the view of just proven isomorphism,
$$
\rhom A T {\rhom B \hT f} =0 \Lra  \rhom B {T\ot_A \hT} f  =0 \Lra \rhom B B f  =0 \Lra f =0 .
$$
Therefore,  $\hT$ is a generator of right $B$-modules. 

In the converse direction, assume that $T$ is a finitely generated and projective right $A$-module, $\lambda$ is an isomorphism and that the map $\ell : B\to \rend A T$ is a monomorphism (i.e.\ $T$ is a faithful left $B$-module). Let $\{e_i\in T, \; e^*_i\in T^*\}$ be a dual basis for right $A$-module $T$. The commutativity of diagram \eqref{diag.A} implies that the following diagram 
$$
\xymatrix{T\ot_R \hT \ar[rrrr] \ar[d]_\hev &&&& T\ot_A \hT \ar[d]^{T\ot_A\lambda} \\
B\ar[rr]^\ell && \rend A T && T\ot_A T^* \ar[ll]}
$$
 is commutative. The unmarked arrow in the top row is the canonical surjection, while the unmarked arrow in the bottom row is the standard coevaluation map. Apply the clockwise composition to $\sum_i e_i \ot_R \lambda^{-1}(e_i^*)$ to obtain an endomorphism of $T$,
 $$
 x\mapsto \sum_ i e_i\lambda\left( \lambda^{-1}\left(e_i^*\right)\right)(x) = \sum_ i e_ie_i^*(x) =x.
 $$
 Since $\ell$ is a monomorphism, the preimage of this map is the unit element of $B$, i.e.\ $1_B =\hev \left( \sum_i e_i \ot_R \lambda^{-1}\left(e_i^*\right)\right)$. Since $\hev$ is a $B$-bimodule map, the above equality implies that $\hev$ is a surjective map. 
 
 The assertions (1)(b) are proven in a symmetric way.

(2) Suppose first that $\hev$ and $\ev$ are surjective. Then we know by part (1), that $\hev$ and $\ev$ induce well-defined bijective maps $T\ot_A\hT\to B$ and $\hT\ot_BT\to A$. One can easily check that these induced maps form a Morita context between $A$ and $B$, which is strict by construction and hence the categories $\M_A$ and $\M_B$ are equivalent.

Conversely, if the functors $-\ot_BT$ and $-\ot_A\hT$ induce an equivalence between the categories $\M_B$ and $\M_A$, then this equivalence is induced by a Morita context $(B,A,T,\hT,\mu,\tau)$. By putting $\ev:\hT\ot_ST\to \hT\ot_BT\to A$, where the first map is the cannonical projection and the second map is the Morita map, and similarly $\hev:T\ot_R\hT\to T\ot_A\hT\to B$, we find that $\hT$ is a formal dual of $T$ such that $\ev$ and $\hev$ are surjective.
\end{proof}

\begin{corollary}\label{cor.faithflat}
Let $T$ be a $B$-$A$ bimodule with a formal dual $\hT$. 
If both $\ev$ and $\hev$ are surjective, then 
\begin{zlist}
\item $T$ is faithfully flat as left $S$-module if and only if $B$ is faithfully flat as left $S$-module.
\item $T$ is faithfully flat as right $R$-module if and only if $A$ is faithfully flat as right $R$-module.
\end{zlist}
\end{corollary}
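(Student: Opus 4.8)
The plan is to extract from Lemma~\ref{lemma.formal.dual}(2) the categorical input and then reduce both statements to the stability of faithful exactness under composition. Since both $\ev$ and $\hev$ are surjective, that lemma gives mutually inverse equivalences $-\ot_BT\colon\M_B\to\M_A$ and $-\ot_A\hT\colon\M_A\to\M_B$, together with bimodule isomorphisms $T\ot_A\hT\cong B$ and $\hT\ot_BT\cong A$; by the symmetric (left-handed) version of part~(1) the functors $T\ot_A-\colon{}_A\M\to{}_B\M$ and $\hT\ot_B-\colon{}_B\M\to{}_A\M$ are likewise inverse equivalences. Being equivalences of abelian categories, all four of these functors are exact and reflect exactness, hence \emph{faithfully exact}; the same is true of the forgetful functors to $\Ab$, which are faithful and both preserve and reflect exactness. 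I would record at the outset that, because these forgetful functors are faithfully exact, faithful flatness of a module coincides with faithful exactness of its associated tensor functor, regardless of whether the latter is viewed as valued in $\Ab$ or in the relevant module category.

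For~(1), I would start from the canonical isomorphism $T\cong B\ot_BT$ of $S$-$A$-bimodules, which yields for every right $S$-module $N$ a natural isomorphism $N\ot_ST\cong(N\ot_SB)\ot_BT$. Thus the functor $-\ot_ST$ factors as $-\ot_SB\colon\M_S\to\M_B$ followed by the equivalence $-\ot_BT$ and the forgetful functor $\M_A\to\Ab$. Since the last two functors are faithfully exact, $-\ot_ST$ is faithfully exact if and only if $-\ot_SB\colon\M_S\to\M_B$ is; and since the forgetful functor $\M_B\to\Ab$ is faithfully exact, this in turn holds if and only if $B$ is faithfully flat as a left $S$-module. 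That gives~(1).

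Part~(2) I would treat symmetrically, now using $T\cong T\ot_AA$ as $B$-$R$-bimodules. For each left $R$-module $N'$ this produces a natural isomorphism $T\ot_RN'\cong T\ot_A(A\ot_RN')$, so that $T\ot_R-$ factors through $A\ot_R-\colon{}_R\M\to{}_A\M$ followed by the equivalence $T\ot_A-$ and a forgetful functor. The identical bookkeeping then shows $T\ot_R-$ is faithfully exact precisely when $A\ot_R-$ is, i.e.\ precisely when $A$ is faithfully flat as a right $R$-module.

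I do not expect a serious obstacle here, since the argument is formal once Lemma~\ref{lemma.formal.dual}(2) is in hand. The only points needing care are the bookkeeping of the one-sided module structures, so that the associativity isomorphisms are genuinely $S$-, respectively $R$-linear and natural in $N$ (resp.\ $N'$), and the (standard but essential) observation that an equivalence of abelian categories reflects exactness as well as preserving it. The real content of the corollary is therefore supplied entirely by the Morita equivalence of the preceding lemma; everything else is the composition of faithfully exact functors.
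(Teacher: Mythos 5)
Your proof is correct and follows essentially the same route as the paper's: both rest on Lemma~\ref{lemma.formal.dual}(2) supplying the Morita equivalence and then transport faithful flatness along $T\cong B\ot_BT$ (respectively $T\cong T\ot_AA$). The only cosmetic difference is that the paper handles the converse direction separately, via faithful flatness of ${}_A\hT$ and the isomorphism ${}_SB\cong {}_ST\ot_A\hT$, whereas you obtain both directions at once from the observation that a faithfully exact functor (here the equivalence $-\ot_BT$, resp.\ $T\ot_A-$) both preserves and reflects faithful exactness.
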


\begin{proof}
(1) Suppose that $B$ is faithfully flat as a left $S$-module. Since $\ev$ and $\hev$ are surjective, $-\ot_BT:\M_B\to \M_A$ is an equivalence of categories (see Lemma \ref{lemma.formal.dual}), hence $T$ is faithfully flat as a left $B$-module. Therefore, ${_ST}\cong {_SB\ot_BT}$ is faithfully flat as well.

Conversely, the surjectivity of $\ev$ and $\hev$ implies that $-\ot_A\hT:\M_A\to\M_B$ is an equivalence of categories, hence $\hT$ is faithfully flat as 
a left $A$-module, so ${_SB}\cong {_ST\ot_A\hT}$ is faithfully flat as well.

Part (2) is proven in a symmetric way.
\end{proof}

\subsubsection*{$B$-$A$ herds and associated corings}
The main object of studies of this paper is given in the following

\begin{definition} \label{def.torsor} Let $T$ be a $B$-$A$ bimodule with a formal dual $\hT$. $T$ is called a {\em bimodule herd} or simply a {\em $B$-$A$ herd}
 provided that there exists an $S$-$R$ bimodule map $\gamma: T\to T\ot_R \hT\ot_S T$ rendering commutative the following diagrams
 \begin{equation}\label{eq.coev}
\xymatrix{
T \ar[rr]^{\gamma} \ar[d]_{\cong} && T\ot_R \hT\ot_S T \ar[d]^{\hev \ot_S T}\\
S\ot_S T \ar[rr]^{\beta \ot_S T} && B\ot_S T\, ,}
 \end{equation} 
 \begin{equation}\label{eq.ev}
\xymatrix{
T \ar[rr]^{\gamma} \ar[d]_{\cong} && T\ot_R \hT\ot_S T \ar[d]^{T \ot_R \ev}\\
T\ot_R R \ar[rr]^{T\ot_R \alpha} && T\ot_R A\, ,}
 \end{equation} 
 \begin{equation}\label{eq.ass}
\xymatrix{
T \ar[rrr]^-{\gamma} \ar[d]_{\gamma} &&& T\ot_R \hT\ot_S T \ar[d]^{\gamma\ot_R\hT \ot_S T}\\
T\ot_R \hT\ot_S T \ar[rrr]^-{T\ot_R\ot_S \hT\ot_S\gamma} &&& T\ot_R \hT\ot_S T\ot_R \hT\ot_S T\, .}
 \end{equation} 
 The map $\gamma$ is called the {\em shepherd}, and the formal dual $\hT$ is referred to as the {\em pen}.
 
The bimodule herd $(T,\gamma)$ is said to be {\em tame} provided $T$ satisfies conditions of Corollary~\ref{cor.faithflat}, i.e.\ the maps $\ev$ and $\hev$ are surjective and $T$ is faithfully flat as an $R$- and $S$-module.
\end{definition}

\begin{notation} \label{not.gamma}
Let $\hT$ be a formal dual of a $B$-$A$ bimodule $T$, and let $\gamma: T\to T\ot_R \hT \ot_S T$ be an $S$-$R$ bimodule map. The application of $\gamma$ to an element $x\in T$ is denoted by 
$$
\gamma (x) = x\sut 1\ot_R x\sut 2\ot_S x\sut 3,
$$
(summation implicit). Given  $\gamma$, define
$$
\gamma_A : \xymatrix{  T \ar[rr]^-\gamma && T\ot_R \hT\ot_S T \ar[rr] && T\ot_A \hT\ot_S T  },
$$
and
$$
\gamma_B : \xymatrix{  T \ar[rr]^-\gamma && T\ot_R \hT\ot_S T \ar[rr] && T\ot_R \hT\ot_B T  },
$$
where the second maps are the canonical surjections.
\end{notation}
\begin{lemma}\label{lemma.lin}
Let $\hT$ be a formal dual of a $B$-$A$ bimodule $T$, and let $\gamma: T\to T\ot_R \hT \ot_S T$ be an $S$-$R$ bimodule map. If $\gamma$ satisfies property \eqref{eq.coev}, then 
 $\gamma_A$ is a right $A$-module map. If $\gamma$ makes \eqref{eq.ev} commute,  then $\gamma_B$ is a left $B$-module map.
\end{lemma}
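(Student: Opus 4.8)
The plan is to prove the first assertion in the form $\gamma_A(xa)=\gamma_A(x)\,a$ for all $x\in T$ and $a\in A$, writing $\gamma(x)=x\sut1\ot_R x\sut2\ot_S x\sut3$ as in Notation~\ref{not.gamma}; the second assertion then follows by the $A$\,/\,$B$, left\,/\,right symmetry of the situation (interchanging $\ev$ with $\hev$, \eqref{diag.A} with \eqref{diag.B}, and \eqref{eq.coev} with \eqref{eq.ev}). Since $\gamma$ is an $S$-$R$ bimodule map and the canonical surjection onto $T\ot_A\hT\ot_S T$ leaves the last tensorand untouched, $\gamma_A$ is automatically right $R$-linear (and left $S$-linear); the whole content of the lemma is thus the upgrade of this right $R$-linearity to right $A$-linearity on the last leg, once the first two legs have been coequalised over $A$.

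The device I would build first is the right $A$-linear retraction $\kappa\colon T\ot_A\hT\ot_S T\to T$, $t\ot_A\hat t\ot_S t'\mapsto\hev(t\ot_R\hat t)\,t'$. By \eqref{diag.A} this equals $t\,\ev(\hat t\ot_S t')$, an expression that is balanced over $A$ in the first two legs (left $A$-linearity of $\ev$) and over $S$ in the last two (by the very domain of $\ev$), so $\kappa$ is well defined; and the right $A$-action rides unchanged on $t'$, so $\kappa$ is right $A$-linear. Property \eqref{eq.coev} is exactly $\hev(x\sut1\ot_R x\sut2)\ot_S x\sut3=1_B\ot_S x$ in $B\ot_S T$, whence $\kappa\bigl(\gamma_A(x)\bigr)=\hev(x\sut1\ot_R x\sut2)\,x\sut3=x$, that is $\kappa\circ\gamma_A=\mathrm{id}_T$. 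Consequently $\gamma_A$ is a split monomorphism, and since $\kappa$ is right $A$-linear one checks at once that the desired identity $\gamma_A(xa)=\gamma_A(x)\,a$ is \emph{equivalent} to the single statement that $\im\gamma_A$ is a right $A$-submodule of $T\ot_A\hT\ot_S T$ (equivalently, that the idempotent $\gamma_A\circ\kappa$ is right $A$-linear): indeed, if $\gamma_A(x)a=\gamma_A(y)$ then applying $\kappa$ forces $y=xa$.

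It therefore remains to show $\gamma_A(x)\,a\in\im\gamma_A$, and here I expect the real difficulty. Feeding $xa$ into \eqref{eq.coev} gives $\hev\bigl((xa)\sut1\ot_R(xa)\sut2\bigr)\ot_S(xa)\sut3=1_B\ot_S xa=\bigl(\hev(x\sut1\ot_R x\sut2)\ot_S x\sut3\bigr)a$ in $B\ot_S T$, so the defect $\gamma(xa)-\gamma(x)a$ lies in the kernel of $\hev\ot_S T$. What is needed, however, is that its image under the \emph{other} canonical surjection $q\colon T\ot_R\hT\ot_S T\to T\ot_A\hT\ot_S T$ vanishes; and the kernels of $\hev\ot_S T$ and of $q$ need not be comparable in general (they coincide when $T\ot_A\hT\cong B$, e.g.\ in the tame case). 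The plan for the last step is thus to use the full formal-dual package \eqref{diag.A}--\eqref{diag.B}, through the identity $\hev(t\ot_R\hat t)\,t'=t\,\ev(\hat t\ot_S t')$, to reorganise the first two legs of $\gamma_A(x)\,a$ until the right $A$-action can be absorbed into the argument of $\gamma_A$; making this reabsorption precise --- rather than the bookkeeping around $\kappa$ --- is where I anticipate the essential work to lie.
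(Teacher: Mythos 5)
Your reduction via the retraction $\kappa\colon T\ot_A\hT\ot_S T\to T$ is correct as far as it goes: $\kappa$ is well defined and right $A$-linear, \eqref{eq.coev} gives $\kappa\circ\gamma_A=\id_T$, and therefore right $A$-linearity of $\gamma_A$ is indeed equivalent to $\gamma_A(x)a\in\im\gamma_A$. But this reformulation does not reduce the difficulty at all --- verifying $\gamma_A(x)a\in\im\gamma_A$ still requires producing the preimage, which can only be $xa$, so you are back to proving $\gamma_A(x)a=\gamma_A(xa)$ directly. Since you explicitly stop there (``making this reabsorption precise \dots is where I anticipate the essential work to lie''), the proof has a genuine gap: the one step that constitutes the content of the lemma is announced but not carried out. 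Your diagnosis that the defect $\gamma(xa)-\gamma(x)a$ lying in $\ker(\hev\ot_S T)$ is insufficient is accurate, which makes the omission all the more consequential.

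The missing computation is a short chain of rewritings, which is exactly the paper's proof. Starting from $\gamma_A(x)a=x\sut 1\ot_A x\sut 2\ot_S x\sut 3a$, insert $1_B$ in the middle using \eqref{eq.coev} applied to the element $x\sut 3a$ (legitimate across $\ot_S$ because $\gamma$ is left $S$-linear):
$$
x\sut 1\ot_A x\sut 2\,\hev\!\left((x\sut 3a)\sut 1\ot_R (x\sut 3a)\sut 2\right)\ot_S (x\sut 3a)\sut 3 .
$$
Then convert $\hev$ to $\ev$ on the middle legs using \eqref{diag.B}, slide the resulting element of $A$ across $\ot_A$ onto the first leg, convert back from $\ev$ to $\hev$ using \eqref{diag.A}, and finally apply \eqref{eq.coev} to $x$ itself to absorb $\hev(x\sut 1\ot_R x\sut 2)$ as $1_B$ acting on $\gamma_A(x\sut 3 a)$; the result is $\gamma_A(xa)$. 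Note that this argument never needs the splitting $\kappa$ --- the ``full formal-dual package'' you invoke at the end is used precisely through the two conversions \eqref{diag.A} and \eqref{diag.B}, and once that is done your preparatory scaffolding becomes redundant.
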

\begin{proof}
The $A$-linearity of $\gamma_A$ is proven by the explicit calculations that use  diagram \eqref{eq.coev} in the second and the last equalities, diagram \eqref{diag.B} in the third equality, and diagram \eqref{diag.A} in the fifth equality. For all $a\in A$ and $x\in T$,
\begin{eqnarray*}
\gamma_A(x)a &=& x\sut 1\ot_R x\sut 2\ot_S x\sut 3a\\
&=& x\sut 1\ot_A x\sut 2\hev\left(\left(x\sut 3a\right)\sut 1\ot_R \left(x\sut 3a\right)\sut 2\right)\ot_S \left(x\sut 3a\right)\sut 3\\
&=&  x\sut 1\ot_A \ev\left(x\sut 2 \ot _S \left(x\sut 3a\right)\sut 1\right)\left(x\sut 3a\right)\sut 2\ot_S \left(x\sut 3a\right)\sut 3\\
&=&  x\sut 1\ev\left(x\sut 2 \ot _S \left(x\sut 3a\right)\sut 1\right)\ot_A \left(x\sut 3a\right)\sut 2\ot_S \left(x\sut 3a\right)\sut 3\\
&=&  \hev\left(x\sut 1 \ot_R x\sut 2\right) \left(x\sut 3a\right)\sut 1\ot_A \left(x\sut 3a\right)\sut 2\ot_S \left(x\sut 3a\right)\sut 3 = \gamma_A(xa).
\end{eqnarray*}
The second statement is proven by symmetric arguments. 
\end{proof}

\begin{proposition}\label{prop.dual}
Let $\hT$ be a formal dual of a $B$-$A$ bimodule $T$, and let $\gamma: T\to T\ot_R \hT \ot_S T$ be an $S$-$R$ bimodule map. 
\begin{zlist}
\item Assume that the map $\gamma$ makes diagram \eqref{eq.coev} commute. For all right $A$-modules $N$, the map
$$
\Theta_N : \rhom A T N\ot_S T\to N\ot_A \hT \ot_ST, \quad f\ot_S x\mapsto \left(f\ot_A\hT\ot_S T\right)\left(\gamma_A\left(x\right)\right) ,
$$
is an isomorphism of right $A$-modules, natural in $N$. In particular, writing $T^* = \rhom ATA$,  
$$T^*\ot_S T\cong \hT\ot_S T, $$ 
as $A$-bimodules. Furthermore, the following diagram
$$
\xymatrix{ T^*\ot_S T \ar[dr]\ar[rr]^{\Theta_A}&&  \hT\ot_S T \ar[dl]^\ev \\
& A& ,}
$$
in which the unmarked arrow is the evaluation map, is commutative. Finally, if $T$ is completely faithful as a left $S$-module, then $\hT \cong T^*$.

\item Assume that the map $\gamma$ makes diagram \eqref{eq.ev} commute. For all left $B$-modules $N$, the map
$$
\hTheta_N : T\ot_R \lhom B T N \to T\ot_R \hT \ot_B N, \quad x\ot_R f\mapsto \left(T\ot_R\hT\ot_B f\right)\left(\gamma_B\left(x\right)\right) ,
$$
is an isomorphism of left $B$-modules, natural in $N$. In particular, writing ${}^*T = \lhom BTB$,  
$$T\ot_R{}^*T\cong T\ot_R \hT ,$$ 
as $B$-bimodules. Furthermore,  the following diagram
$$
\xymatrix{ T\ot_R {}^*T \ar[dr]\ar[rr]^{\hTheta_B}&&  T\ot_R \hT \ar[dl]^\hev \\
& B& ,}
$$
in which the unmarked arrow is the evaluation map, is commutative. Finally, if $T$ is  completely faithful as a right $R$-module, then $\hT \cong {}^*T$.
\end{zlist}
\end{proposition}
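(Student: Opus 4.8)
The plan is to exhibit an explicit two-sided inverse of $\Theta_N$ built from the evaluation map $\ev$, and to verify the triangle identities by rewriting every occurrence of $\ev$ in terms of $\hev$ through the duality diagrams \eqref{diag.A}--\eqref{diag.B} and then collapsing with the coevaluation identity \eqref{eq.coev}. Throughout I use the Sweedler notation $\gamma_A(x)=x\sut1\ot_A x\sut2\ot_S x\sut3$ of Notation~\ref{not.gamma}, so that $\Theta_N(f\ot_S x)=f(x\sut1)\ot_A x\sut2\ot_S x\sut3$. First I would note that $\Theta_N$ is well defined and right $A$-linear: since $\gamma$ makes \eqref{eq.coev} commute, $\gamma_A$ is right $A$-linear by Lemma~\ref{lemma.lin}, and as $f\ot_A\hT\ot_S T$ acts through the last tensorand this gives $\Theta_N(f\ot_S xa)=\Theta_N(f\ot_S x)\,a$; naturality in $N$ is immediate, since $\Theta_{N'}((h\circ f)\ot_S x)=(h\ot_A\hT\ot_S T)\Theta_N(f\ot_S x)$ for every right $A$-linear $h\colon N\to N'$.

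Next I would define the candidate inverse
$$
\Phi_N\colon N\ot_A\hT\ot_S T\longrightarrow \rhom A T N\ot_S T,\qquad n\ot_A\hat t\ot_S x\longmapsto \big[\,y\mapsto n\,\ev(\hat t\ot_S y)\,\big]\ot_S x,
$$
which is well defined because $\ev$ is an $A$-bimodule map (yielding right $A$-linearity of $y\mapsto n\,\ev(\hat t\ot_S y)$ and compatibility with the $\ot_A$) and is $S$-balanced (yielding compatibility with the $\ot_S$). The heart of the argument is the pair of identities $\Phi_N\circ\Theta_N=\mathrm{id}$ and $\Theta_N\circ\Phi_N=\mathrm{id}$. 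For the first, $\Phi_N\Theta_N(f\ot_S x)$ equals $\big[y\mapsto f(x\sut1)\ev(x\sut2\ot_S y)\big]\ot_S x\sut3$; right $A$-linearity of $f$ together with \eqref{diag.A} rewrites the bracket as $y\mapsto f\big(\hev(x\sut1\ot_R x\sut2)\,y\big)$, i.e.\ as $f\cdot\hev(x\sut1\ot_R x\sut2)$ under the right $B$-action on $\rhom A T N$. For the second, a symmetric computation using \eqref{diag.B} turns $\Theta_N\Phi_N(n\ot_A\hat t\ot_S x)$ into $n\ot_A \hat t\,\hev(x\sut1\ot_R x\sut2)\ot_S x\sut3$.

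The step I expect to be the main obstacle is exactly the passage that uses \eqref{eq.coev}. In both computations a factor $\hev(x\sut1\ot_R x\sut2)\in B$ sits next to $x\sut3$ across the $S$-balanced tensor product, and a \emph{general} element of $B$ cannot be moved across $\ot_S$. The device is to invoke \eqref{eq.coev} \emph{before} collapsing: it gives $\hev(x\sut1\ot_R x\sut2)\ot_S x\sut3=1_B\ot_S x$ in $B\ot_S T$, so after tensoring this identity on the left (by $\rhom A T N\ot_S-$, respectively $N\ot_A\hT\ot_S-$) and applying the action map that multiplies the $B$-factor into the neighbouring tensorand, one obtains $f\cdot 1_B\ot_S x=f\ot_S x$, respectively $n\ot_A\hat t\,1_B\ot_S x=n\ot_A\hat t\ot_S x$. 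This establishes that $\Theta_N$ is a natural isomorphism of right $A$-modules.

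It remains to read off the stated consequences. Taking $N=A$ identifies $A\ot_A\hT\ot_S T$ with $\hT\ot_S T$ and gives the $A$-bimodule isomorphism $\Theta_A\colon T^*\ot_S T\to\hT\ot_S T$ (left $A$-linearity being clear from the formula). For the triangle I would compute $\ev(\Theta_A(f\ot_S x))=f(x\sut1)\ev(x\sut2\ot_S x\sut3)=f\big(x\sut1\ev(x\sut2\ot_S x\sut3)\big)$ using left $A$-linearity of $\ev$ and right $A$-linearity of $f$; here one additionally invokes the counit property \eqref{eq.ev}, which collapses $x\sut1\ev(x\sut2\ot_S x\sut3)$ to $x$ and yields $f(x)$. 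Finally, for $N=A$ the inverse $\Phi_A$ is precisely $\lambda\ot_S T$, where $\lambda\colon\hT\to T^*$, $\hat t\mapsto[x\mapsto\ev(\hat t\ot_S x)]$, is the $A$-$S$ bimodule map of Lemma~\ref{lemma.formal.dual}; hence $\lambda\ot_S T$ is an isomorphism, and if $T$ is completely faithful as a left $S$-module the functor $-\ot_S T$ reflects isomorphisms, forcing $\lambda$ itself to be an isomorphism, so $\hT\cong T^*$. Part~(2) then follows by the mirror-image argument, interchanging the roles of $A$ and $B$, of $R$ and $S$, of $\ev$ and $\hev$, of \eqref{diag.A} and \eqref{diag.B}, and of \eqref{eq.coev} and \eqref{eq.ev}.
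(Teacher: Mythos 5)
Your construction is essentially the paper's own proof: your $\Phi_N$ is exactly the paper's $\Theta_N^{-1}(n\ot_A\hx\ot_S x)=n\lambda(\hx)\ot_S x$, and both composite verifications proceed by the same conversion between $\ev$ and $\hev$ via \eqref{diag.A}, \eqref{diag.B} followed by the collapse \eqref{eq.coev}; your observation that \eqref{eq.coev} must be applied \emph{before} trying to move the $B$-factor across $\ot_S$ is exactly the point, and the identification $\Phi_A=\lambda\ot_S T$ plus reflection of isomorphisms by $-\ot_S T$ is also the paper's argument.

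There is, however, one genuine misstep: in the triangle computation you collapse $x\sut1\ev(x\sut2\ot_S x\sut3)$ to $x$ by ``invoking the counit property \eqref{eq.ev}.'' But part (1) of the proposition assumes only that $\gamma$ makes \eqref{eq.coev} commute; diagram \eqref{eq.ev} is \emph{not} available here (it is the hypothesis of part (2)), so as written this step uses an assumption you do not have. The identity you need is nevertheless true under the stated hypothesis: by \eqref{diag.A} one has $x\sut1\ev(x\sut2\ot_S x\sut3)=\hev(x\sut1\ot_R x\sut2)\,x\sut3$, and then \eqref{eq.coev} gives $\hev(x\sut1\ot_R x\sut2)\,x\sut3=1_B\,x=x$ --- precisely the same two-step device you already used to prove $\Phi_N\circ\Theta_N=\mathrm{id}$. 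With that substitution the triangle computation, and hence the whole proof, is correct; the mirror statement in part (2) must correspondingly use \eqref{diag.B} together with \eqref{eq.ev} only.
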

\begin{proof}
(1) By Lemma~\ref{lemma.lin}, $\Theta_N$ is a right $A$-module map. The inverse of $\Theta_N$ is given by
$$
\Theta_N^{-1}: N\ot_A \hT\ot_S T\to \rhom A T N \ot_S T, \qquad n\ot_A \hx\ot_S x \mapsto n\lambda(\hx)\ot_S x,
$$
where
$\lambda : \hT \to T^*$, $\hx\mapsto [x\mapsto \ev(\hx \ot_Sx)]$ is the map described in Lemma~\ref{lemma.formal.dual}. Indeed, first, for all $n\in N$, $\hx \in \hT$ and $x\in T$,
\begin{eqnarray*}
\Theta_N\circ\Theta_N^{-1}( n\ot_A\hx\ot_S x) &=& n\lambda(\hx)(x\sut 1)\ot_A x\sut 2\ot_S x\sut 3 \\
&=& n\ot_A \ev(\hx\ot_S x\sut 1)x\sut 2\ot_S x\sut 3 \\
&=& n\ot_A \hx\hev( x\sut 1\ot_R x\sut 2)\ot_S x\sut 3 =  n\ot_A\hx\ot_S x,
\end{eqnarray*}
where the third equality follows by \eqref{diag.B} and the final equality by \eqref{eq.coev}. Second, for all $f\in \rhom A T N$ and $x\in T$,
\begin{eqnarray*}
\Theta_N^{-1}\circ \Theta_N (f\ot_S x) &=& f(x\sut 1)\lambda(x\sut 2) \ot_S x\sut 3 \\
&=& f(x\sut 1) \ev(x\sut 2\ot_S -)\ot_S x\sut 3 =  f\left(x\sut 1 \ev\left( x\sut 2\ot_S -\right)\right)\ot_S x\sut 3\\
&=& f\left(\hev\left( x\sut 1 \ot_R x\sut 2\right)-\right)\ot_S x\sut 3 = f\ot_S x,
\end{eqnarray*}
where the third equality follows by the right $A$-linearity of $f$, the fourth equality is a consequence of \eqref{diag.A}, and the final equality follows by \eqref{eq.coev}.

The forms of $\Theta_N$ and $\Theta_N^{-1}$ imply immediately that these maps are natural in $N$. The commutativity of the triangle diagram follows by the following direct calculation, for all $f\in \rhom A T A$ and $x\in T$,
\begin{eqnarray*}
\ev\left(\Theta_A\left(f\ot_S x\right)\right) &=& \ev \left( f\left(x\sut 1\right)x\sut 2\ot_S x\sut 3\right) = f\left(x\sut 1\ev\left(x\sut 2\ot_S x\sut 3\right)\right)\\
&=&  f\left(\hev\left(x\sut 1\ot_R x\sut 2\right)x\sut 3\right) = f(x),
\end{eqnarray*}
where the second equality follows by the right $A$-linearity of $f$ and the left $A$-linearity of $\ev$, and the last equality is a consequence of \eqref{eq.coev}.

Finally, note that $\Theta_A^{-1} = \lambda\ot_S T$. The tensor functor $-\ot_S T$ of a completely faithful module reflects exact sequences (see \cite[page~233]{AndFul:rin}), hence it also reflects isomorphisms. Thus, if $T$ is a completely faithful left $S$-module, $\lambda$ is the required isomorphism. 

The statement (2) is proven by symmetric arguments.
\end{proof}

Since, in the definition of a bimodule herd, the pen $\hT$ appears only in the forms $\hT\ot_S T$ and $T\ot_R \hT$, Proposition~\ref{prop.dual} implies that {\em a posteriori}  the definition of a bimodule herd does not depend on the choice of a formal dual of $T$.

\begin{corollary}\label{cor.coring}
Let $(T,\gamma)$ be a $B$-$A$ herd.
\begin{zlist}
\item The $A$-bimodule $\cC = \hT \ot_S T\cong T^*\ot_S T$ is an $A$-coring with coproduct
$$
\DC: \hx\ot_S x\mapsto \hx\ot_S\gamma_A(x),
$$
and the counit $\eC=\ev$. $T$ is a right $\cC$-comodule with the coaction $\gamma_A$.

\item The $B$-bimodule $\cD = T \ot_R \hT\cong T\ot_R {}^*T$ is a $B$-coring with coproduct
$$
\DD: x\ot_R \hx\mapsto \gamma_B(x)\ot_R \hx,
$$
and the counit $\eD= \hev$. $T$ is a left $\cD$-comodule with the coaction $\gamma_B$.
\end{zlist}
\end{corollary}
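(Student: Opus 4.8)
The plan is to verify the coring axioms for $\cC=\hT\ot_S T$ and the comodule axioms for $(T,\gamma_A)$ directly; part~(2) then follows by the symmetry interchanging $A$ with $B$, $R$ with $S$, and $\ev$ with $\hev$. Throughout I write $\gamma(x)=x\sut 1\ot_R x\sut 2\ot_S x\sut 3$ as in Notation~\ref{not.gamma}, so that $\DC(\hx\ot_S x)=\hx\ot_S x\sut 1\ot_A x\sut 2\ot_S x\sut 3$, regarded as an element of $\cC\ot_A\cC=\hT\ot_S T\ot_A\hT\ot_S T$.

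First I would record that $\eC=\ev$ is an $A$-bimodule map by Definition~\ref{def.dual}, and that $\DC$ is a well-defined $A$-bimodule map: the $S$-$R$ bilinearity of $\gamma$ gives compatibility with the $S$-balancing in the leading factor, left $A$-linearity is carried by $\hx$, and right $A$-linearity is precisely the right $A$-linearity of $\gamma_A$ established in Lemma~\ref{lemma.lin}. Coassociativity is then a projection of the shepherd's coassociativity: applying the canonical surjections $T\ot_R\hT\to T\ot_A\hT$ at the two $T$-$\hT$ junctions of diagram~\eqref{eq.ass} (and prepending $\hx\ot_S-$) identifies its two legs with $(\DC\ot_A\cC)\circ\DC$ and $(\cC\ot_A\DC)\circ\DC$ applied to $\hx\ot_S x$; the same projection without the leading $\hx$ yields the coassociativity $(\gamma_A\ot_A\cC)\circ\gamma_A=(T\ot_A\DC)\circ\gamma_A$ of the coaction.

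For the counits there is a mild asymmetry, and the left counit is where I expect the only real obstacle. The right counit and the comodule counit are immediate from \eqref{eq.ev}, since $(\cC\ot_A\eC)\circ\DC(\hx\ot_S x)=\hx\ot_S x\sut 1\ev(x\sut 2\ot_S x\sut 3)=\hx\ot_S x$ and $(T\ot_A\eC)\circ\gamma_A(x)=x\sut 1\ev(x\sut 2\ot_S x\sut 3)=x$, the scalar $\ev(x\sut 2\ot_S x\sut 3)\in A$ being absorbed into the genuine right $A$-module $T$. For the left counit one computes $(\eC\ot_A\cC)\circ\DC(\hx\ot_S x)=\ev(\hx\ot_S x\sut 1)\,x\sut 2\ot_S x\sut 3$, and here a naive simplification would require moving a $B$-valued scalar across the merely $S$-balanced product $\ot_S$, which fails. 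The fix is to rewrite $\ev(\hx\ot_S x\sut 1)\,x\sut 2=\hx\,\hev(x\sut 1\ot_R x\sut 2)$ by \eqref{diag.B}, and then to invoke the module-level content of \eqref{eq.coev}, namely the identity $\hev(x\sut 1\ot_R x\sut 2)\ot_S x\sut 3=1_B\ot_S x$ in $B\ot_S T$. Applying the map $B\ot_S T\to\hT\ot_S T$, $b\ot_S y\mapsto\hx b\ot_S y$ (which is well defined because it uses only $S$-balancing, as $\beta(S)\subseteq B$) to this identity gives $\hx\,\hev(x\sut 1\ot_R x\sut 2)\ot_S x\sut 3=\hx\ot_S x$, completing the left counit.

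This proves part~(1). Part~(2) is the mirror image: for $\cD=T\ot_R\hT$ with $\eD=\hev$ and coaction $\gamma_B$, coassociativity again descends from \eqref{eq.ass}, while the left counit $(\eD\ot_B\cD)\circ\DD$ and the comodule counit are immediate from \eqref{eq.coev}. The remaining right counit $(\cD\ot_B\eD)\circ\DD(x\ot_R\hx)=x\sut 1\ot_R x\sut 2\,\hev(x\sut 3\ot_R\hx)$ is treated exactly as the delicate step above, now using \eqref{diag.B} to replace $x\sut 2\,\hev(x\sut 3\ot_R\hx)$ by $\ev(x\sut 2\ot_S x\sut 3)\hx$ and then the $T\ot_R A$-level form of \eqref{eq.ev} to conclude that this equals $x\ot_R\hx$.
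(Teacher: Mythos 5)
Your proof is correct and follows essentially the same route as the paper's own (the paper's ``less categorical'' argument): $A$-bilinearity of $\DC$ via Lemma~\ref{lemma.lin}, coassociativity from \eqref{eq.ass}, one counit identity directly from \eqref{eq.ev}, and the other by converting $\ev$ to $\hev$ through \eqref{diag.B} and then applying \eqref{eq.coev} --- exactly the delicate step you isolate. The paper additionally notes a one-line alternative, transporting the comonad structure of $\rhom ATN\ot_S T$ along the natural isomorphism $\Theta$ of Proposition~\ref{prop.dual}, but the direct verification you give is the one it spells out.
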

\begin{proof} (1) The maps $\Theta_N$ in Proposition~\ref{prop.dual} establish an isomorphism of functors $\Theta: \rhom A T - \ot_S T\to - \ot_A \hT \ot_ST$. The domain of $\Theta$ is a comonad on the category of right $A$-modules, hence so is the codomain of $\Theta$. This implies that $\hT\ot_S T$ is an $A$-coring with the described comultiplication and counit.

For a less categorical proof, one can use the following direct arguments. By the definition of the map $\ev$ and Lemma~\ref{lemma.lin} both the coproduct and counit are $A$-bimodule maps. The coassociativity of $\DC$ follows immediately by diagram \eqref{eq.ass}. The equality $(\eC\ot_A\cC)\circ \DC =\cC$ is an immediate cosequence of diagram \eqref{eq.ev}. The other counitality property, $(\cC\ot_A\eC)\circ \DC =\cC$, is established by converting $\ev$ to $\hev$ with the help of the diagram \eqref{diag.B}, and then by using \eqref{eq.coev}. 

By Lemma~\ref{lemma.lin}, $\gamma_A$ is a right $A$-module map. It is coassociative by \eqref{eq.ass} and is counital by \eqref{eq.ev}.

The assertion (2) is proven by symmetric arguments.
\end{proof}

\begin{example}\label{ex.comatrix}
Take a finitely generated projective right $A$-module $T$, and set $B=\rend A T$ and $R=A$. Let  $\hT = T^*$. Then  $T$ is a bimodule herd with
$$
\gamma: T \to T\ot_A \hT \ot_S T, \quad x\mapsto \sum_i e_i\ot_A f^i\ot_S x,
$$
where $e_i\in T$, $f^i\in \hT$ is (any) finite dual basis for $T$. The coring $\cC$ is simply the (finite) comatrix coring \cite{ElKGom:com}.
\end{example}

\begin{example}\label{ex.firm}
Let $\hS$ be a ring, possibly without a unit. We say that a right $\hS$-module $M$ is {\em firm}  if and only if the multiplication map induces an isomorphism $M\ot_{\hS}\hS\to M$. A ring is called firm if it is firm as a left, or equivalently right, $\hS$-module. If $\hS$ has a unit, then firm modules are exactly the unital modules. The category of all firm right modules of $\hS$ and $\hS$-linear maps between them is denoted by $\M_{\hS}$.

A right $A$-module $T$ is said to be {\em $\hS$-firmly projective} \cite{V:equiv} if it is an $\hS$-$A$ bimodule that is firm as a left $\hS$-module, and if the functor $-\ot_{\hS}T:\M_{\hS}\to\M_A$ has a right adjoint of the form $-\ot_A\hT$, where $\hT$ is an $A$-$\hS$ bimodule that is firm as a right $\hS$-module. Denote the unit of the adjunction by $\eta$ and the counit by $\epsilon$. Then $\eta_{\hS}:\hS\to T\ot_A\hT$ and $\epsilon_A:\hT\ot_{\hS} T\to A$.
If $\hS$ has a unit, then an $\hS$-firmly projective right $A$-module is precisely a finitely generated and projective right $A$-module.

Let $T$ be an $\hS$-firmly projective right $A$-module and use notation as above. Let $S$ be the Dorroh-extension of $\hS$, which is a ring with unit. One can easily observe that $M\ot_{\hS} N\cong M\ot_S N$ for $M\in\M_{\hS}$ and $N\in{_{\hS}\M}$. Furthermore, $\hT$ is a formal dual of $T$, having $\ev=\epsilon_A$, $R=A$, $B=\Endd_A(T)$ and $\hev:T\ot_A\hT\to B$, $\hev(x\ot_A\hx)(y)=x\epsilon_A(\hx\ot_Sy)$. Finally, $T$ is a bimodule herd, where the shepherd 
$\gamma=\eta_T: T \to T\ot_S \hT\ot_AT$, is the unit of the adjunction on $T$. The associated $A$-coring $\Cc$ is the comatrix coring associated to the firm bimodule $T$ as defined in \cite{GTV:firm}. The associated $B$-coring $\Dd$ coincides with the construction of a coring out of a firm ring that is an ideal in a unital ring (see \cite[Theorem 1.6]{BohVer:firm}).
\end{example}

\begin{example}\label{ex.Galois}
Let $C$ be an $R$-coring, and let $\psi: C\ot_R A\to A\ot_R C$ be an $R$ bimodule map entwining $C$ with $A$. Set $\cC := A\ot_R C$ to be the $A$-coring associated to this entwining structure. Assume that $T$ is a finite Galois (right) comodule of $\cC$. This means that $T$ is a right $\cC$-comodule that is finitely generated and projective as a right $A$-module and that the canonical map
$$
\can : T^*\ot_S T \to \cC = A\ot_R C, \qquad f\ot_S x \mapsto f(x\sw 0)\ot_R x\sw 1,
$$
where $S = \Rend \cC T$, is bijective (an isomorphism of $A$-corings). Here $x\mapsto x\sw 0\ot_R x\sw 1$ (summation implicit) denotes the coaction of $C$ on $T$ (the $\cC$-coaction is then $x\mapsto  x\sw 0 \ot_A 1_A\ot_R x\sw 1$). Set $B = \rend AT$, $\hT = T^*$, $\hev: T\ot_R T^*\to T\ot_A T^* \cong B$, and $\ev: T^*\ot_S T\to A$ the standard evaluation. Consider the {\em translation map}
$$
\tau: C\to T^*\ot_S T, \qquad c\mapsto \can^{-1}(1_A\ot_R c).
$$
Then $T$ is a bimodule herd  with the shepherd
$$
\gamma: T\to T\ot_R T^*\ot_S T, \qquad x\mapsto x\sw 0 \ot_R \tau(x\sw 1).
$$
\end{example}
\begin{proof}
Since $\gamma$ is a composition of left  $S$-module maps, it is a left  $S$-module map. For all $a\in A$ and $c\in C$, write 
$$
\psi (c\ot_R a) = \sum_\psi a_\psi \ot_R c^\psi.
$$
The right $A$-linearity of $\can^{-1}$ implies that, for all $a\in A$ and $c\in C$, \begin{equation}\label{eq.a-lin}
\tau(c)a = \sum_\psi a_\psi \tau(c^\psi). 
\end{equation}
In particular, for all $r\in R$,
$
\tau(c)\alpha(r) = \alpha(r)\tau(c),
$
i.e.\ the image of $\tau$ is in the centraliser of $R$ in $T^*\ot_S T$. Since the coaction of $A\ot_R C$ on $T$ is right $A$-linear, and the right $A$-multiplication in $A\ot_R C$ is given through $\psi$, the equality mentioned below yields, for all $x\in T$ and $r\in R$,
\begin{eqnarray*}
\gamma(xr) &=& (xr)\sw 0 \ot_R \tau\left( \left( xr\right)\sw 0\right) = \sum_\psi x\sw 0 r_\psi \ot_R \tau\left( x\sw 1 ^\psi\right) \\
&=& x\sw 0 r\ot_R \tau(x\sw 1) = x\sw 0\ot_R \tau(x\sw 1)r = \gamma(x)r.
\end{eqnarray*}
This proves that $\gamma$ is a right $R$-module map.

Let $\{e_i\in T, \; e^*_i\in T^*\}$ be a dual basis. Identifying $B$ with $T\ot_AT^*$ we can identify $1_B$ with   $\sum_i e_i\ot_A e^*_i$. Take any $x\in T$ and apply the identity map $(T\ot_A \can^{-1})\circ (T\ot_A \can)$ to $\sum_i e_i\ot_A e^*_i\ot_S x$ to conclude that
$$
x\sw 0\ot_A \tau(x\sw 1) = \sum_i e_i\ot_A e^*_i\ot_S x.
$$
This means that the map $\gamma$ makes the diagram \eqref{eq.coev} commute. Next, take any $c\in C$, and evaluate the identity map $\can\circ\can^{-1}$ on $1_A\ot_R c$ to obtain 
$
\ev\circ\tau = \alpha\circ \eps_C.
$
This equality then yields, for all $x\in T$,
$$
x\sw 0 \ot_R \ev\left(\tau\left( x\sw 1\right)\right) = x\sw 0 \ot_R \alpha\left(\eps_C\left(x\sw 1\right)\right) = x\ot_R 1_A,
$$
i.e.\ the diagram \eqref{eq.ev} is commutative. The commutativity of diagram \eqref{eq.ass} follows by the $C$-colinearity of $\tau$.
\end{proof}

\begin{notation} Given a $B$-$A$ herd  $(T,\gamma)$ with a pen $\hT$, define an $R$-bimodule $C$ as the equaliser
\begin{equation}\label{def.C}
\xymatrix{ C\ar[r] & \hT \ot_S T\ar@<.6ex>[rrrr]^-{(\ev\ot_R\hT\ot _ST)\circ (\hT\ot_S\gamma)}
   \ar@<-.6ex>[rrrr]_-{\alpha\ot_R\hT\ot_ST}  &&&&
    A\ot_R\hT\ot_ST\, .} 
 \end{equation}
Symmetrically, define an $S$-bimodule $D$ as the equaliser
\begin{equation}\label{def.D}
\xymatrix{ D\ar[r] & T \ot_R \hT\ar@<.6ex>[rrrr]^-{(T\ot_R\hT\ot _S\hev)\circ (\gamma\ot_R\hT)}
   \ar@<-.6ex>[rrrr]_-{T\ot_R\hT\ot_S\beta}  &&&&
    T\ot_R\hT\ot_SB\, .} 
 \end{equation}
 \end{notation}
 
 \begin{proposition}\label{prop.iso}
 Let $(T,\gamma)$ be a $B$-$A$ herd. Define $C$ by the equaliser \eqref{def.C} and $D$ by the equaliser \eqref{def.D}.
 \begin{zlist}
 \item If the equaliser \eqref{def.C} is a $T_R$-pure equaliser, then
 $$
 \cC = \hT\ot_S T \cong A\ot_R C,
 $$
 as $A$-$R$ bimodules.
 
  \item If the equaliser \eqref{def.D} is a ${}_ST$-pure equaliser, then
 $$
 \cD = T\ot_R \hT \cong D\ot_S B,
 $$
 as $S$-$B$ bimodules.
 \end{zlist}
 \end{proposition}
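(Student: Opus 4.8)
The plan is to prove part (1) and obtain part (2) by the symmetric argument. Abbreviate the two parallel maps in the equaliser \eqref{def.C} by $\iota\colon\cC\to A\ot_R\cC$, $\xi\mapsto 1_A\ot_R\xi$ (this is $\alpha\ot_R\cC$), and by $\rho:=(\ev\ot_R\cC)\circ(\hT\ot_S\gamma)$, so that $\rho(\hx\ot_S x)=\ev(\hx\ot_S x\sut1)\ot_R x\sut2\ot_S x\sut3$ and $C=\mathrm{Eq}(\rho,\iota)$, where $\cC=\hT\ot_S T$ is the $A$-coring of Corollary~\ref{cor.coring}. The candidate isomorphism is the multiplication map $\mu\colon A\ot_R C\to\cC$, $a\ot_R\xi\mapsto a\xi$, which is manifestly a map of $A$-$R$ bimodules; I must show it is bijective.

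First I would dispose of the coring-theoretic core, which needs no purity. Let $m\colon A\ot_R\cC\to\cC$ be the left action $a\ot_R\xi\mapsto a\xi$. The counit axiom \eqref{eq.ev} (equivalently, the counit property of $\cC$) gives $m\circ\rho=\mathrm{id}_\cC=m\circ\iota$, so both $\rho$ and $\iota$ are split monomorphisms with common retraction $m$. Next, using coassociativity \eqref{eq.ass} of $\gamma$ together with \eqref{eq.ev} and the left $A$-linearity of $\ev$, I would verify the ``coassociativity of $\rho$'', namely $(A\ot_R\rho)\circ\rho=(A\ot_R\iota)\circ\rho$; concretely this reduces, after applying the $R$-balanced map $\ev(\hx\ot_S-)\ot_R A$ to the counit identity $x\sut1\ot_R\ev(x\sut2\ot_S x\sut3)=x\ot_R 1_A$, to a single use of \eqref{eq.ass}. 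Consequently $\rho$ factors through $E:=\mathrm{Eq}(A\ot_R\rho,A\ot_R\iota)\subseteq A\ot_R\cC$, giving $\bar\rho\colon\cC\to E$, while $m$ restricts to $m_E\colon E\to\cC$. A short check shows $m_E$ and $\bar\rho$ are mutually inverse: $m_E\circ\bar\rho=m\circ\rho=\mathrm{id}_\cC$, whereas $\bar\rho\circ m_E=\mathrm{id}_E$ follows because for $\eta\in E$ the defining equality $(A\ot_R\rho)(\eta)=(A\ot_R\iota)(\eta)$, pushed forward along the multiplication $A\ot_R A\ot_R\cC\to A\ot_R\cC$ and using the left $A$-linearity of $\rho$, says exactly $\rho(m(\eta))=\eta$. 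Thus $\cC\cong E$ as $A$-$R$ bimodules, unconditionally.

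It remains to identify $E$ with $A\ot_R C$. The inclusion $C\hookrightarrow\cC$ induces $A\ot_R C\to A\ot_R\cC$ whose image lands in $E$ (since $\rho$ and $\iota$ agree on $C$), so there is a canonical comparison map $A\ot_R C\to E$; it is an isomorphism precisely when the functor $A\ot_R-$ preserves the equaliser \eqref{def.C}. \emph{This is the one step that uses the hypothesis, and it is the main obstacle}: the assumed $T_R$-purity of \eqref{def.C} is exactly the condition ensuring that this equaliser is preserved by the relevant base-change functor $A\ot_R-$ over $R$, whence $A\ot_R C\cong E$. Composing with the isomorphism $\cC\cong E$ of the previous paragraph — and tracing the maps, so that $\mu$ corresponds to $m_E$ and hence the composite $\cC\xrightarrow{\bar\rho}E\cong A\ot_R C$ is the inverse of $\mu$ — yields the desired isomorphism $\cC=\hT\ot_S T\cong A\ot_R C$ of $A$-$R$ bimodules.

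Finally, part (2) is proved by the left-right (that is, $A$-$B$) symmetric version of the same argument: one replaces $\rho,\iota,\ev$ and \eqref{eq.ev} by their counterparts built from $\hev$ and \eqref{eq.coev}, works with the $B$-coring $\cD=T\ot_R\hT$ of Corollary~\ref{cor.coring}, and invokes the ${}_ST$-purity of \eqref{def.D} in the role played above by $T_R$-purity.
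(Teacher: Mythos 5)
Your reduction to the intermediate equaliser $E=\mathrm{Eq}(A\ot_R\rho,A\ot_R\iota)$ is a genuinely different organisation from the paper's proof, and the unconditional part is essentially correct: the identities $m\circ\rho=\mathrm{id}_{\cC}=m\circ\iota$, the ``coassociativity'' $(A\ot_R\rho)\circ\rho=(A\ot_R\iota)\circ\rho$, and the resulting isomorphism $\cC\cong E$ all hold. (One citation is off: $m\circ\rho=\mathrm{id}_{\cC}$ is the counit identity $(\eps_\cC\ot_A\cC)\circ\Delta_\cC=\cC$, which follows from \eqref{diag.B} together with \eqref{eq.coev}, not from \eqref{eq.ev}; \eqref{eq.ev} gives the other counit identity.) The paper instead constructs the isomorphism directly: it uses $T_R$-purity to show $\gamma(T)\subseteq T\ot_R C$, defines $\theta=(\ev\ot_R C)\circ(\hT\ot_S\gamma)\colon \cC\to A\ot_R C$, exhibits the multiplication map as its inverse, and checks both composites by hand using \eqref{diag.B}, \eqref{eq.coev} and the defining property of $C$.

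The genuine gap is in your last step. You assert that the $T_R$-purity of \eqref{def.C} ``is exactly the condition ensuring that this equaliser is preserved by the relevant base-change functor $A\ot_R-$''. It is not: $T_R$-purity means that the functor $T\ot_R-$ preserves the equaliser, while your comparison map $A\ot_R C\to E$ requires preservation by $A\ot_R-$ (in particular, injectivity of $A\ot_R C\to A\ot_R\cC$). These are different functors, and there is no general implication from one to the other: $T$ is merely a right $A$-module, not a generator or cogenerator of $\M_R$ relative to $A$, so exactness properties of $T\ot_R-$ do not transfer to $A\ot_R-$. What $T_R$-purity actually buys — and this is how the paper uses it — is that $\gamma\colon T\to T\ot_R\hT\ot_S T$, which equalises the two maps of \eqref{def.C} tensored with $T_R$ (a computation from \eqref{eq.ass} and \eqref{eq.ev}), corestricts to a map $T\to T\ot_R C$. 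This corestriction is what makes $\theta$ above land in $A\ot_R C$ and, in your language, what shows the comparison map $A\ot_R C\to E$ is \emph{surjective} (every element of $E$ is $\rho(\xi)$ for some $\xi\in\cC$, and $\rho(\xi)$ visibly lies in the image once $\gamma$ has been corestricted). Your argument never invokes this corestriction, so the hypothesis is left doing a job it cannot do. To repair the proof you should follow the paper's route: use $T_R$-purity to corestrict $\gamma$, define $\theta$ and the multiplication map explicitly, and verify that they are mutually inverse; the same remark applies to the symmetric part (2).
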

 \begin{proof}
 (1) Set 
 $$
 \balpha = T\ot_R \alpha\ot_R\hT\ot_ST, \quad \kappa = T\ot_R \left( (\ev\ot_R\hT\ot _ST)\circ (\hT\ot_S\gamma)\right),
 $$
 and note that, for all $x\in T$,
 \begin{eqnarray*}
 \kappa\circ\gamma (x) &=& x\sut 1\ot_R \ev(x\sut 2\ot_S x\sut 3\sut 1)\ot_R x\sut 3\sut 2\ot_S x\sut 3\sut 3\\
 &=& x\sut 1\sut 1\ot_R  \ev(x\sut 1 \sut 2\ot_S x\sut 1\sut 3)\ot_R x\sut 2\ot_S x\sut 3\\
 &=& x\sut 1 \ot_R 1_A\ot_R x\sut 2\ot_S x\sut 3 = \balpha\circ\gamma(x).
 \end{eqnarray*}
 The second equality follows by \eqref{eq.ass}, and the third one is a consequence of \eqref{eq.ev}. Since the equaliser \eqref{def.C} is $T_R$-pure, and $\kappa$ and $\balpha$ are the equalised maps tensored with $T_R$, we conclude that, for all $x\in T$,
 $$
 \gamma(x) \in T\ot_R C.
 $$
 Hence we can define
 $$
 \theta: \hT\ot_S T\to A\ot_R C, \qquad \theta = (\ev \ot_R \hT\ot_S T)\circ (\hT\ot_S\gamma).
 $$
 The map $\theta$ is left $A$-linear, since $\ev$ is left $A$-linear, and it is right $R$-linear since $\gamma $ is right $R$-linear. Furthermore, the map $\theta$ is bijective with the inverse
 $$
 \theta^{-1} : A\ot_R C\to \hT\ot_S T, \qquad a \ot_R \sum_i \hx_i\ot_S x_i\mapsto \sum_i a\hx_i\ot_S x_i.
 $$
 Indeed, for all $\hx\in \hT$ and $x\in T$,
 \begin{eqnarray*}
 \theta^{-1}\circ\theta (\hx\ot_S x) &=& \ev(\hx\ot_S x\sut 1)x\sut 2\ot_S x\sut 3\\
 &=& \hx\hev(x\sut 1\ot_R x\sut 2)\ot_S x\sut 3 = \hx\ot_S x,
 \end{eqnarray*}
 where the second equality follows by \eqref{diag.B}, while the last equality is a consequence of \eqref{eq.coev}. Second, for all $a\in A$ and $\sum_i \hx_i\ot_S x_i\in C$,
 \begin{eqnarray*}
 \theta\circ\theta^{-1} (a \ot_R \sum_i \hx_i\ot_S x_i) &=& \sum_i \ev(a\hx_i\ot_S x_i\sut 1)\ot_R x_i\sut 2 \ot_S x_i\sut 3\\
 &=&
a\ev\left( \sum_i \hx_i\ot_S x_i\sut 1\right)\ot_R x_i\sut 2 \ot_S x_i\sut 3\\
& = & a\ot_R \sum_i \hx_i\ot_S x_i,
\end{eqnarray*}
where the $A$-linearity of $\ev$ is used in the first equality, and the last equality follows by the definition of $C$.

Statement (2) is proven by symmetric arguments.
\end{proof}

\begin{lemma}\label{lemma.split}
Let $(T,\gamma)$ be a $B$-$A$ herd.
\begin{zlist}
\item The equaliser \eqref{def.C} tensored with $T_R$ is a split equaliser. Consequently, if $T$ is a faithfully flat right $R$-module, then \eqref{def.C} is a pure equaliser in $\M_R$.
\item The equaliser \eqref{def.D} tensored with ${}_ST$ is a split equaliser. Consequently, if $T$ is a faithfully flat  left $S$-module, then \eqref{def.D} is a pure equaliser in ${_S\M}$.
\end{zlist}
\end{lemma}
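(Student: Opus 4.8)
The plan is to produce an explicit contracting homotopy for the fork obtained by applying $T\ot_R-$ to \eqref{def.C}, namely
$$T\ot_R C\xrightarrow{\ m\ }T\ot_R\hT\ot_S T\rightrightarrows T\ot_R A\ot_R\hT\ot_S T,\qquad m:=T\ot_R\iota,$$
whose two parallel arrows are the maps $\kappa=T\ot_R\big((\ev\ot_R\hT\ot_S T)\circ(\hT\ot_S\gamma)\big)$ and $\balpha=T\ot_R\alpha\ot_R\hT\ot_S T$ already used in Proposition~\ref{prop.iso}, and then to verify the three identities defining a split equaliser. For the retraction of the lower arrow I would take $t:=\mu\ot_R\hT\ot_S T$, where $\mu\colon T\ot_R A\to T$ is the right $A$-action; right unitality of $T$ gives $t\circ\balpha=\id$ immediately. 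For the section I set $\rho:=t\circ\kappa$ and compute, using the formal-dual relation \eqref{diag.A} in the form $x\,\ev(\hx\ot_S y\sut1)=\hev(x\ot_R\hx)\,y\sut1$, that $\rho(x\ot_R\hx\ot_S y)=\hev(x\ot_R\hx)\,\gamma(y)$.

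The heart of the matter is to show that $\rho$ equalises $\kappa$ and $\balpha$, for then it factors as $\rho=m\circ s$ and yields the required $s\colon T\ot_R\hT\ot_S T\to T\ot_R C$. Here I would invoke coassociativity \eqref{eq.ass} to transport the comultiplication from the third leg of $\gamma(y)$ to the first, and then counitality \eqref{eq.ev} to collapse $y\sut1\ot_R\ev(y\sut2\ot_S y\sut3\sut1)\ot_R y\sut3\sut2\ot_S y\sut3\sut3=y\sut1\ot_R 1_A\ot_R y\sut2\ot_S y\sut3$; left-multiplying the first leg by $\hev(x\ot_R\hx)$ then gives $\kappa\rho=\balpha\rho$ on the nose. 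The last identity $s\circ m=\id$ follows because $\kappa$ and $\balpha$ already agree on $\im m$ — this is exactly the defining property $u\iota=v\iota$ of $C$ transported by $T\ot_R-$ — together with $t\circ\balpha=\id$. These three equations are precisely the data of a split equaliser.

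The step I expect to be the genuine obstacle is not the bookkeeping of the legs but the well-definedness of $s$, i.e.\ that the homotopy really lands in $T\ot_R C$ (equivalently, that $\gamma$ corestricts to $T\ot_R C$) without circularly assuming the very purity we are after. The point to emphasise is that a split equaliser is an \emph{absolute} equaliser, preserved by every functor, so establishing the homotopy forces $T\ot_R C$ to be $\mathrm{eq}(\kappa,\balpha)$ rather than presupposing it. The ingredient that makes the homotopy available is the common retraction $n\colon A\ot_R\hT\ot_S T\to\hT\ot_S T$ given by the left $A$-action, which satisfies $n\circ u=n\circ v=\id$ by \eqref{diag.B} and \eqref{eq.coev}; recognising the tensored fork as split is then the same as the assertion that \eqref{def.C} is $T_R$-pure.

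Finally, for the ``consequently'' clause I would argue that, $T$ being faithfully flat as a right $R$-module, $T\ot_R-$ is exact and reflects exactness, so that the split (hence absolute) equaliser just produced descends: a standard faithfully flat descent of purity upgrades $T_R$-purity to purity of \eqref{def.C} in $\M_R$. Part (2) is proved \emph{mutatis mutandis}, contracting with ${}_ST$ on the other side and interchanging the roles of $A$ and $B$, of $\ev$ and $\hev$, and of \eqref{eq.ev}/\eqref{diag.A} with \eqref{eq.coev}/\eqref{diag.B}.
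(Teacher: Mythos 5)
Your two computations are exactly the ones the paper uses: the retraction $t=\pi_C$ of $\balpha$ given by the right $A$-action on the first tensor factor, and the identity $\kappa\circ t\circ\kappa=\balpha\circ t\circ\kappa$ obtained from \eqref{eq.ass} and \eqref{eq.ev} (your $\kappa\rho=\balpha\rho$ with $\rho=t\circ\kappa$, after rewriting $\rho$ via \eqref{diag.A}). Up to that point the proposal agrees with the paper's proof.

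The gap is in the step you yourself flag: the construction of the section $s$ with codomain $T\ot_R C$. To factor $\rho=m\circ s$ you need $\rho$ (equivalently $\gamma$) to corestrict to $T\ot_R C$, i.e.\ you need $T\ot_R C$ to be the equaliser of $\kappa$ and $\balpha$ --- which is precisely the $T_R$-purity of \eqref{def.C} that the lemma is meant to establish. Your proposed escape does not close this circle: the common retraction $n$ with $n\circ\zeta_C=n\circ\xi_C=\id$ does exist (your verification via \eqref{diag.B} and \eqref{eq.coev} is correct), but it only says that both equalised maps are split monomorphisms with a common retraction; this does not imply that their equaliser is split or absolute, and the sentence asserting that recognising the tensored fork as split ``is the same as'' the $T_R$-purity of \eqref{def.C} restates the problem rather than solving it. A smaller issue of the same nature: deducing $s\circ m=\id$ from $m\circ s\circ m=m$ needs $m=T\ot_R\iota$ to be injective, which is also not known before flatness enters.

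The paper avoids all of this by not insisting that the split-equaliser object be $T\ot_R C$ at the outset. The two identities you have already proved, $\pi_C\circ\balpha=\id$ and $\kappa\circ\pi_C\circ\kappa=\balpha\circ\pi_C\circ\kappa$, say that $(\balpha,\kappa)$ is a \emph{contractible pair}; such a pair automatically has an absolute equaliser, namely the splitting of the idempotent $\pi_C\circ\kappa$, with no reference to $C$ whatsoever. Only then, assuming $T_R$ flat, does one identify $T\ot_R C$ with that equaliser (flatness makes $T\ot_R-$ preserve the equaliser \eqref{def.C}), conclude that it is absolute, and use faithful flatness to reflect the equalisers $T\ot_R C\ot_R V=\mathrm{eq}(T\ot_R\zeta_C\ot_R V,\,T\ot_R\xi_C\ot_R V)$ down to $C\ot_R V=\mathrm{eq}(\zeta_C\ot_R V,\,\xi_C\ot_R V)$. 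If you reorganise your argument in this order --- contractibility first, identification of the equaliser object only after flatness is assumed --- your computations do complete the proof; part (2) then follows, as you say, by the symmetric argument.
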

\begin{proof}
(1) Denote the equalised maps in \eqref{def.C} by $\zeta_C$ and $\xi_C$ and set as before
 $$
 \balpha = T\ot_R \zeta_C = T\ot_R \alpha\ot_R\hT\ot_ST, \quad \kappa =T\ot_R \xi_C =  T\ot_R \left( (\ev\ot_R\hT\ot _ST)\circ (\hT\ot_S\gamma)\right).
 $$
Define
 $$
 \pi_C: T\ot_R A\ot_R \hT\ot_S T \to T\ot_R \hT\ot_S T, \qquad x\ot_R a\ot_R \hx \ot_S y\mapsto xa\ot_R \hx \ot_S y.
 $$
 Obviously, $\pi_C\circ \balpha = T\ot_R \hT\ot_S T$. Furthermore, for all $x,y\in T$ and $\hx\in \hT$,
 \begin{eqnarray*}
 \kappa\circ\pi_C\circ\kappa (y\ot_R \hx\ot_S x) &=& y\ev (\hx\ot_S x\sut 1)\ot_R \ev(x\sut 2\ot_S x\sut 3\sut 1)\ot_R x\sut 3\sut 2\ot_S x\sut 3\sut 3\\
 &=&y\ev (\hx\ot_S x\sut 1\sut 1)\ot_R \ev(x\sut 1\sut 2\ot_S x\sut 1\sut 3)\ot_R x\sut 2\ot_S x\sut 3\\
 &=& y\ev (\hx\ot_S x\sut 1)\ot_R 1_A\ot_R x\sut 2\ot_S x\sut 3\\
 &=& \balpha\circ\pi_C\circ\kappa (y\ot_R \hx\ot_S x),
 \end{eqnarray*}
 where the diagram \eqref{eq.ass} is used to derive the second equality. The third equality follows by \eqref{eq.ev}. This proves that  $T\ot_R \zeta_C$ and $T\ot_R\xi_C$ is a contractible pair, hence \eqref{def.C} tensored with $T_R$ is a split equaliser. 
 
 Assume now that $T$ is a faithfully flat right $R$-module. Since $T_R$ is flat, $T\ot_R C$ is the equaliser of $T\ot_R \zeta_C$ and $T\ot_R\xi_C$. The latter is a split, hence absolute, equaliser of right $R$-module maps, thus, for all left $R$-modules $V$,  $T\ot_R C\ot_R V$ is the equaliser of $T\ot_R \zeta_C\ot_R V$ and $T\ot_R\xi_C\ot_R V$. Since faithfully flat modules reflect equalisers, we conclude that $C\ot_R V$ is the equaliser of $ \zeta_C\ot_R V$ and $\xi_C\ot_R V$. This means that \eqref{def.C} is a pure equaliser of right $R$-module maps. 
  
 (2) This is proven by symmetric arguments. In particular, the splitting morphism is
$$
 \pi_D: T\ot_R \hT\ot_SB\ot_S T \to T\ot_R \hT\ot_S T, \qquad x\ot_R  \hx \ot_S b\ot_S y\mapsto x\ot_R \hx \ot_S by.
 $$ 
\end{proof}

Recall that, for any (unital associative) rings $K$ and $L$,  a ring map $K\to L$ is called a {\em split extension} if it is a $K$-bimodule section.

\begin{lemma}\label{lemma.split.1}
Let $(T,\gamma)$ be a $B$-$A$ herd.
\begin{zlist}
\item If $\alpha$ is a split extension, then  the equaliser \eqref{def.C}  is a split (hence pure) equaliser of $R$-bimodules. 
\item If  $\beta$ is a split extension, then  the equaliser \eqref{def.D}  is a split (hence pure) equaliser of $S$-bimodules. 
\end{zlist}
\end{lemma}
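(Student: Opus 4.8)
The plan is to prove that \eqref{def.C} is a \emph{split} (contractible) equaliser of $R$-bimodules; purity is then automatic, since split equalisers are absolute. Keep the notation $\zeta_C=\alpha\ot_R\hT\ot_ST$ and $\xi_C=(\ev\ot_R\hT\ot_ST)\circ(\hT\ot_S\gamma)$ for the two equalised maps of \eqref{def.C}, as in the proof of Lemma~\ref{lemma.split}. I would use the following standard criterion: to split the pair $\zeta_C,\xi_C\colon\hT\ot_ST\rightrightarrows A\ot_R\hT\ot_ST$ it suffices to produce an $R$-bimodule map (a contraction) $t\colon A\ot_R\hT\ot_ST\to\hT\ot_ST$ with
$$
t\circ\zeta_C=\mathrm{id},\qquad \zeta_C\circ t\circ\xi_C=\xi_C\circ t\circ\xi_C .
$$
Indeed these force $P:=t\circ\xi_C$ to be idempotent with image exactly $C$ (for $w\in C$ one has $P(w)=t\zeta_C(w)=w$, and $\zeta_CP=\xi_CP$ shows $\operatorname{im}P\subseteq C$), so $e\colon C\to\hT\ot_ST$ is split by the corestriction of $P$, and $t$ supplies the rest of the split-equaliser data.

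This mirrors Lemma~\ref{lemma.split}: there, after applying $T\ot_R-$, the middle $A$-factor was absorbed into the extra copy of $T$ through $\pi_C$. Here there is no extra $T$, so instead I absorb the $A$-factor into $R$ using the section $\sigma\colon A\to R$ of $\alpha$ (an $R$-bimodule map with $\sigma\alpha=\mathrm{id}_R$, which exists precisely because $\alpha$ is a split extension). Concretely I set
$$
t=\tau_\sigma\colon A\ot_R\hT\ot_ST\xrightarrow{\ \sigma\ot_R\hT\ot_ST\ }R\ot_R\hT\ot_ST\cong\hT\ot_ST,\qquad a\ot_R\hx\ot_Sx\longmapsto\alpha\sigma(a)\,\hx\ot_Sx .
$$
Since $\sigma$ is an $R$-\emph{bimodule} map, $\tau_\sigma$ is a map of $R$-bimodules (this is where the hypothesis enters, via the left $R$-linearity of $\sigma$). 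The first identity $\tau_\sigma\circ\zeta_C=\mathrm{id}$ is immediate from $\sigma\alpha=\mathrm{id}_R$, as $\tau_\sigma(1_A\ot_R\hx\ot_Sx)=\alpha\sigma(1_A)\hx\ot_Sx=\hx\ot_Sx$.

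The substance is the second identity $\zeta_C\tau_\sigma\xi_C=\xi_C\tau_\sigma\xi_C$. I would compute $\xi_C\tau_\sigma\xi_C(\hx\ot_Sx)$ by pulling the scalar $\alpha\sigma(\ev(\hx\ot_Sx\sut1))$ out of the second evaluation (left $A$-linearity of $\ev$) and then applying coassociativity \eqref{eq.ass}, exactly as in the computation of $\kappa\circ\pi_C\circ\kappa$ in Lemma~\ref{lemma.split}, arriving at
$$
\sum\alpha\sigma\big(\ev(\hx\ot_Sx\sut1\sut1)\big)\,\ev\big(x\sut1\sut2\ot_Sx\sut1\sut3\big)\ot_R x\sut2\ot_Sx\sut3 .
$$
The final step is to collapse the inner $\gamma(x\sut1)$ by the counit property \eqref{eq.ev}, i.e.\ $\sum x\sut1\sut1\ot_R\ev(x\sut1\sut2\ot_Sx\sut1\sut3)=x\sut1\ot_R1_A$ in $T\ot_RA$. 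This \emph{cannot} be done naively on elements, because $\sigma$ is not $A$-linear and traps $x\sut1\sut1$ inside $\alpha\sigma(\ev(\hx\ot_S-))$ — and this is the main obstacle. The remedy is to display the expression above as $\Phi$ evaluated on $\sum x\sut1\sut1\ot_R\ev(x\sut1\sut2\ot_Sx\sut1\sut3)\ot_Rx\sut2\ot_Sx\sut3$, where
$$
\Phi\colon T\ot_RA\ot_R\hT\ot_ST\to A\ot_R\hT\ot_ST,\qquad z\ot_Ra\ot_R\hx'\ot_Sx'\longmapsto\alpha\sigma\big(\ev(\hx\ot_Sz)\big)\,a\ot_R\hx'\ot_Sx' .
$$
The point is that $\Phi$ is well defined over the $T\ot_RA$ tensor product precisely because $\ev(\hx\ot_S-)$ and $\sigma$ are right $R$-linear, giving $\alpha\sigma(\ev(\hx\ot_Szr))=\alpha\sigma(\ev(\hx\ot_Sz))\alpha(r)$; this is the second, essential, use of the split-extension hypothesis. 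Feeding the counit identity into $\Phi$ replaces the $T\ot_RA$-factor by $x\sut1\ot_R1_A$ and yields $\sum\alpha\sigma(\ev(\hx\ot_Sx\sut1))\ot_Rx\sut2\ot_Sx\sut3$, which equals $\zeta_C\tau_\sigma\xi_C(\hx\ot_Sx)$ after moving the scalar $\alpha\sigma(\ev(\hx\ot_Sx\sut1))\in\alpha(R)$ across $\ot_R$.

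Both contraction identities then give that \eqref{def.C} is a split equaliser of $R$-bimodules, hence a pure one, proving (1). Statement (2) follows by the symmetric argument, replacing $\alpha$, its section $\sigma$, $\ev$, and $\zeta_C,\xi_C$ by $\beta$, a section of $\beta$, $\hev$, and the two equalised maps of \eqref{def.D}, and using \eqref{eq.coev} in place of \eqref{eq.ev}. The crux, as indicated, is the counit-collapse step: the failure of $\sigma$ to be $A$-linear is circumvented by re-packaging the calculation through the $R$-balanced map $\Phi$ and invoking the right $R$-linearity of $\sigma$, which is the exact analogue of the role played by the $T$-action (through $\pi_C$) in Lemma~\ref{lemma.split}.
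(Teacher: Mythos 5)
Your proposal is correct and follows essentially the same route as the paper: the paper's proof uses exactly the same splitting morphism $\pi_C(a\ot_R\hx\ot_S x)=\pi_\alpha(a)\hx\ot_S x$ (your $\tau_\sigma$, with $\pi_\alpha=\sigma$) and verifies the same contractible-pair identities by the computations of Lemma~\ref{lemma.split}. The only difference is that you spell out the well-definedness of the auxiliary map $\Phi$ and the counit-collapse step, which the paper leaves implicit under ``calculations similar to those in the proof of Lemma~\ref{lemma.split}''.
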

\begin{proof}
This lemma is proven by  calculations similar to that in the proof of Lemma~\ref{lemma.split}. If $\pi_\alpha: A\to R$ is an $R$-bimodule map such that $\pi_\alpha\circ\alpha = R$, then the splitting morphism for the equaliser \eqref{def.C} is 
$
\pi_C:  A\ot_R \hT\ot_S T \to  \hT\ot_S T$, $a\ot_R \hx \ot_S x\mapsto \pi_\alpha(a)\hx \ot_S x$. Symmetrically, if  $\pi_\beta: B\to S$ is an $S$-bimodule map such that $\pi_\beta\circ\beta = S$, then the splitting morphism for the equaliser \eqref{def.C} is 
$
\pi_D: T\ot_R \hT\ot_S B \to  T\ot_R\hT$, $ x \ot_R \hx\ot_S b\mapsto x \ot_R \hx\pi_\beta(b)$. 
\end{proof}

\begin{theorem}\label{thm.entw}
Let $(T,\gamma)$ be a $B$-$A$ herd, and let $C$ be defined by the equaliser \eqref{def.C} and $D$ by the equaliser \eqref{def.D}.
\begin{zlist}
\item Assume that
\begin{rlist}
\item  $T$ is a faithfully flat right $R$-module and $A$ is a faithfully flat right (or left) $R$-module, or
\item  $\alpha$ is a split extension.
\end{rlist}
Then:
\begin{blist}
\item $C$ is an $R$-coring with coproduct
$$
\Delta_C: C\to C\ot_R C, \qquad \sum_i \hx_i\ot_S x_i\mapsto \hx_i\ot_S \gamma(x_i),
$$
and counit $\eps_C = \ev\mid_ C$.
\item $C$ is entwined with $A$ (over $R$) by the map
$
\psi: C\ot_R A\to A\ot_R C$,
$$
\sum_i \hx_i\ot_S x_i\ot_R a\mapsto \sum_i \ev\left(\hx_i\ot_S (x_ia)\sut 1\right)\ot_R (x_ia)\sut 2\ot_S (x_ia)\sut 3.
$$
\item $T$ is a right $(A,C,\psi)_R$-entwined module with the coaction $\gamma$.
\end{blist}
\item Assume that
\begin{rlist}
\item $T$ is a faithfully flat left $S$-module and $B$ is a faithfully flat left (or right) $S$-module, or
\item  $\beta$ is a split extension.
\end{rlist}
Then:
\begin{blist}
\item $D$ is an $S$-coring with coproduct
$$
\Delta_D: D\to D\ot_R D, \qquad \sum_i x_i\ot_R\hx_i\mapsto  \gamma(x_i)\ot_R \hx_i,
$$
and counit $\eps_D = \hev\mid_ D$.
\item $B$ is entwined with $D$ (over $S$) by the map
$
\varphi: B\ot_S D\to D\ot_SB$,
$$
 \sum_i b\ot_Sx_i\ot_R \hx_i\mapsto \sum_i  (bx_i)\sut 1\ot_R (bx_i)\sut 2\ot_S \hev\left((bx_i)\sut 3 \ot_R \hx_i \right).
$$
\item $T$ is a left $(B,D,\varphi)_S$-entwined module with the coaction $\gamma$.
\end{blist}
\end{zlist}
\end{theorem}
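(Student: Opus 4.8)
The plan is to deduce everything from the identification $\cC\cong A\ot_R C$ of Proposition~\ref{prop.iso}(1), the $A$-coring structure on $\cC=\hT\ot_S T$ furnished by Corollary~\ref{cor.coring}(1), and the standard correspondence between corings of the form $A\ot_R C$ and entwining structures (see \cite{BrzWis:cor}). I prove part~(1); part~(2) is the left--right mirror image, obtained from \eqref{def.D}, Proposition~\ref{prop.iso}(2), Corollary~\ref{cor.coring}(2) and Lemmas~\ref{lemma.split}(2), \ref{lemma.split.1}(2).

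\emph{Purity and the bimodule isomorphism.} Under hypothesis~(i) the faithful flatness of $T$ as a right $R$-module gives, via Lemma~\ref{lemma.split}(1), that \eqref{def.C} is a pure equaliser in $\M_R$; under hypothesis~(ii) the retraction $\pi_\alpha$ of $\alpha$ gives the same conclusion by Lemma~\ref{lemma.split.1}(1). In both cases the equaliser is $T_R$-pure, so Proposition~\ref{prop.iso}(1) provides the $A$-$R$ bimodule isomorphism $\theta=(\ev\ot_R\hT\ot_S T)\circ(\hT\ot_S\gamma)\colon\cC\to A\ot_R C$, with inverse $a\ot_R\sum_i\hx_i\ot_S x_i\mapsto\sum_i a\hx_i\ot_S x_i$. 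By the very definition \eqref{def.C} of $C$ one has $\theta(c)=1_A\ot_R c$ for every $c\in C$.

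\emph{Descending the coring and reading off $\psi$.} Transporting the comultiplication $\DC$ and counit $\eC=\ev$ of Corollary~\ref{cor.coring}(1) across $\theta$, one checks that $\Delta_C$ and $\eps_C$ as written in the statement are well defined. For $\Delta_C$ the issue is that $\sum_i\hx_i\ot_S\gamma(x_i)$ a priori lies only in $\cC\ot_R\cC=\hT\ot_S T\ot_R\hT\ot_S T$; its right tensorand lies in $C$ by the inclusion $\gamma(x)\in T\ot_R C$ established in the proof of Proposition~\ref{prop.iso}(1), and its left tensorand lies in $C$ by the mirror computation, which rewrites the iterated shepherd by coassociativity \eqref{eq.ass} and collapses the inner evaluation by \eqref{eq.ev}; purity in $\M_R$ is exactly what lets one recognise the result inside $C\ot_R C$. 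The counit $\eps_C\colon C\to R$ is characterised by $\alpha\circ\eps_C=\ev\!\mid_C$, its existence amounting to the factorisation of $\ev\!\mid_C$ through $\alpha$; this is immediate from $\pi_\alpha$ under hypothesis~(ii) and follows by faithfully flat descent along $\alpha$ from the faithful flatness of $A$ over $R$ under hypothesis~(i). Coassociativity and counitality of $\Delta_C$ are then inherited from those of $\DC$, since $-\ot_R C$ and $C\ot_R-$ reflect the relevant identities by purity. The entwining $\psi$ is nothing but the right $A$-action of the coring $\cC$ read through $\theta$: for $c\in C$ and $a\in A$ one has $\psi(c\ot_R a)=\theta(ca)$, which unwinds to the displayed formula, while $\theta(c)=1_A\ot_R c$ gives the unit condition $\psi(c\ot_R 1_A)=1_A\ot_R c$. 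By the correspondence of \cite{BrzWis:cor}, an $A$-coring structure on $A\ot_R C$ whose left action is canonical and whose counit and comultiplication are induced from the $R$-coring $C$ as above is the same datum as an entwining structure $(A,C,\psi)_R$ with $\psi$ recovered in this way; the four entwining axioms then translate into associativity and unitality of the right $A$-action together with \eqref{eq.coev}, \eqref{eq.ev} and \eqref{eq.ass}. Finally, Corollary~\ref{cor.coring}(1) makes $T$ a right $\cC$-comodule via $\gamma_A$; under $\cC\cong A\ot_R C$ this coaction becomes $\gamma$, now landing in $T\ot_R C$, and a right $A\ot_R C$-comodule is precisely a right $(A,C,\psi)_R$-entwined module, which is~(c).

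\emph{Main obstacle.} The crux is the descent of the third paragraph, namely that $\Delta_C$ factors through $C\ot_R C$ and $\eps_C$ through $\alpha\colon R\to A$. This is precisely where the two alternative hypotheses are used, and where the $T_R$-purity that already yields the bimodule isomorphism $\cC\cong A\ot_R C$ does not suffice on its own: one needs purity in $\M_R$ to locate the comultiplication, and either the splitting of $\alpha$ or the faithful flatness of $A$ over $R$ to corestrict the counit.
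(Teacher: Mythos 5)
Your argument follows the paper's own proof essentially step for step: purity of the equaliser \eqref{def.C} via Lemmas~\ref{lemma.split} and \ref{lemma.split.1}, the isomorphism $\theta$ of Proposition~\ref{prop.iso} used both to transport the coring structure of Corollary~\ref{cor.coring} down to $C$ and to read off $\psi$ as the transported right $A$-action, and faithfully flat descent (or the retraction $\pi_\alpha$) to corestrict the counit to $R$. The only slip is cosmetic: the computation showing that the left tensorand of $\Delta_C(c)$ lands in $C$ closes with the defining equaliser property of $c\in C$ (together with coassociativity \eqref{eq.ass}), not with \eqref{eq.ev}.
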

\begin{proof}
(1)(a) Under either of the hypotheses,  the equaliser \eqref{def.C} is $T_R$-pure, thus, as explained in the proof of Proposition~\ref{prop.iso},  $\gamma(T)\subseteq T\ot_R C$. Consequently, $\Delta_C (C)\subseteq \hT\ot_S T\ot_R C$. Furthermore, writing as before $\zeta_C$ and $\xi_C$ for the maps equalised in \eqref{def.C},
\begin{eqnarray*}
\left(\xi_C\ot_R C\right)\circ \Delta_C &=& (\ev\ot_R \hT\ot_S T\ot_R C)\circ (\hT\ot_S\gamma\ot_R C)\circ (\hT\ot_S\gamma)\mid_C\\
&=&  (\ev\ot_R \hT\ot_S T\ot_R C)\circ (\hT\ot_ST\ot_R\hT\ot_S\gamma)\circ (\hT\ot_S\gamma)\mid_C\\
&=&  (A\ot_R \hT\ot_S\gamma)\circ (\ev\ot_R C)\circ (\hT\ot_S\gamma)\mid_C\\
&=& (A\ot_R \hT\ot_S\gamma)\circ (\alpha\ot_R C) = (\zeta_C\ot_R C)\circ \Delta_C,
\end{eqnarray*}
where the second equality follows by diagram \eqref{eq.ass}, and the fourth equality is a consequence of the definition of $C$. In view of Lemma~\ref{lemma.split} (in the case of hypothesis (i)) or Lemma~\ref{lemma.split.1} (in the case of hypothesis (ii)), the equaliser of right $R$-module maps $\zeta_C$ and $\xi_C$, i.e.\ the equaliser defining $C$, is a pure equaliser, hence $\Delta_C(C)\subseteq C\ot_R C$. Therefore, $\Delta_C$ is a well defined $R$-bimodule map  $C\to C\ot_R C$. It is coassociative by diagram \eqref{eq.ass}. 

For any $c \in C$, 
$$
(\ev\ot_R \hT\ot_S T)\circ (\hT\ot_S\gamma)(c) = 1_A\ot_R c.
$$
Applying $A\ot_R \ev$ to this equality and using \eqref{eq.ev} we immediately obtain
$$
1_A\ot_R \ev(c) = \ev(c)\ot_R 1_A.
$$
If $A$ is a faithfully flat right or left $R$-module (hypothesis (i)), the above equality implies that, for all $c\in C$, $\ev(c) \in R$. On the other hand, if there is an $R$-bimodule map $\pi_\alpha: A\to R$ such that $\pi_\alpha\circ\alpha =R$, then applying it to both sides of the above equality one concludes that $\ev(c) = (\alpha\circ \pi_\alpha\circ\ev)(c)$, i.e. $\ev(c)\in R$ as needed. That $\eps_C = \ev\mid_C$ is a counit for $\Delta_C$ follows by the definition of $C$ and  diagram \eqref{eq.ev}. 

(1)(b) and (1)(c). By either of the hypotheses, $C$ is defined by a $T_R$-pure equaliser. Thus, by Proposition~\ref{prop.iso}~(1), $A\ot_R C\cong \hT\ot_S T$ as $A$-$R$-bimodules. Using the explicit form of this isomorphism in the proof of Proposition~\ref{prop.iso}~(1), one easily finds that the induced right $A$-module structure on $A\ot_R C$ is, for all $a\in A$, $c\in C$,
$$
(1_A\ot_R c)a := \theta\left(\theta^{-1}\left(1_A\ot_R c\right) a\right) = \psi( c\ot_R a).
$$
Furthermore, the induced (i.e.\ compatible with the isomorphism $\theta$) $A$-coring structure on $A\ot_R C$ comes out as $A\ot_R \Delta_C$ and $A\ot_R\eps_C$. This implies that $C$ is entwined with $A$ by $\psi$ (cf.\ \cite[Proposition~2.1]{Brz:str}). Since $T$ is a right $\hT\ot_S T\cong A\ot_R C$-comodule with the coaction $\gamma_A$, it is a right entwined module. The induced  $C$-coaction $(T\ot_A\theta)\circ\gamma_A$ comes out as $\gamma$.

The assertions (2) are proven by symmetric arguments.
\end{proof}

\begin{remark}\label{rem.entw} 
The observations of Theorem~\ref{thm.entw} under the hypotheses (ii) are a bimodule version of the construction of a Hopf algebra from a (copointed) quantum heap in \cite{Sko:hea}. More specifically, let $H$ be a quantum heap (over a commutative ring $k$) with the structure map $\gamma: H\to H\ot_k H\ot_k H$, and let $\pi_\alpha:H\to k$ be an algebra character. Then $H$ is a $k$-$k$ bimodule herd, and let $C$ be the associated $k$-coring (coalgebra). Then the map $\pi_\alpha \ot_k H\mid_C : C\to H$ is an isomorphism of coalgebras with the inverse $(\pi_\alpha \ot_k  H\ot_k  H)\circ \gamma$.
\end{remark}

\subsubsection*{Herds and Galois comodules} The following theorem, which is the main result of this section, establishes tame $B$-$A$-herds as a way of describing finite Galois comodules.

\begin{theorem}\label{thm.torsor.galois}
Let $T$ be a  $B$-$A$ bimodule 
that is a progenerator as a right $A$-module, $B=\rend AT$, and assume that
\begin{rlist}
\item  $A$ is a faithfully flat right $R$-module and $B$ is a faithfully flat left $S$-module, or
\item  $\alpha$  and $\beta$ are split extensions.
\end{rlist}
Then the following statements are equivalent:
\begin{blist}
\item $T$ is a (tame) bimodule herd.
\item There exists a right entwining $\psi: C\ot_R A \to A\ot_R C$ over $R$ such that $T$ is a right Galois comodule over $\cC = A\ot_R C$ with $S=\Rend \cC T$.
\item There exists a left entwining $\varphi: B\ot_S D\to D\ot_S B$ over $S$ such that $T$ is a left Galois comodule over $\cD = D\ot_S B$ with $R= \Lend \cD T$.
\end{blist}
\end{theorem}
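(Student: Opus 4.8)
The plan is to prove the three-way equivalence $(a)\Leftrightarrow(b)\Leftrightarrow(c)$ by assembling results already established in the excerpt, with the module-theoretic data of Lemma~\ref{lemma.formal.dual} and Example~\ref{ex.Galois} doing most of the heavy lifting. First I would observe that since $T$ is a progenerator as a right $A$-module with $B=\rend AT$, Lemma~\ref{lemma.formal.dual}(2) applies: taking $\hT=T^*=\rhom ATA$ with the standard $\ev$ and $\hev$ makes $\hT$ a formal dual of $T$ with both $\ev$ and $\hev$ surjective. Thus any bimodule herd structure on $T$ is automatically \emph{tame} under hypothesis~(i) (by Corollary~\ref{cor.faithflat}, faithful flatness of $A_R$ and ${}_SB$ transfers to $T$), so the parenthetical ``tame'' in $(a)$ is free; under hypothesis~(ii) the word ``tame'' should be read as just referring to the herd structure together with the surjectivity of $\ev,\hev$, which again holds by the progenerator assumption.

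\textbf{The implication $(b)\Rightarrow(a)$} is essentially Example~\ref{ex.Galois}: given the entwining $\psi$ and the Galois property of $T$ over $\cC=A\ot_R C$, the shepherd $\gamma(x)=x\sw0\ot_R\tau(x\sw1)$ built from the translation map $\tau=\can^{-1}(1_A\ot_R-)$ endows $T$ with a herd structure, and I would simply cite that computation, checking that the identifications $S=\Rend\cC T$, $B=\rend AT$, $\hT=T^*$ match the hypotheses here. The symmetric statement gives $(c)\Rightarrow(a)$.

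\textbf{For $(a)\Rightarrow(b)$}, I would start from Corollary~\ref{cor.coring}: the herd produces the $A$-coring $\cC=\hT\ot_S T\cong T^*\ot_S T$ with coaction $\gamma_A$, making $T$ a right $\cC$-comodule that is finitely generated projective over $A$. The key is to recognise this $\cC$ as coming from an entwining structure: Theorem~\ref{thm.entw}(1) supplies precisely this, constructing the $R$-coring $C$ (via the equaliser~\eqref{def.C}), the entwining $\psi$, and the isomorphism $\cC\cong A\ot_R C$ of $A$-corings — valid exactly under hypotheses~(i) or~(ii), which are what guarantee $T_R$-purity of the equaliser through Lemma~\ref{lemma.split} or Lemma~\ref{lemma.split.1}. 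It then remains to verify that $T$ is a \emph{Galois} comodule, i.e.\ that $\can:T^*\ot_S T\to A\ot_R C$ is bijective and that $S=\Rend\cC T$. Bijectivity should follow because $\can$ is, up to the isomorphisms $T^*\ot_S T\cong\cC\cong A\ot_R C$ of Proposition~\ref{prop.dual}(1) and Proposition~\ref{prop.iso}(1), the identity; the identification $S=\Rend\cC T$ is where I expect the real work, since one must show that the $\cC$-colinear endomorphisms of $T$ are exactly the left $S$-multiplications. The symmetric argument yields $(a)\Rightarrow(c)$.

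\textbf{The main obstacle} I anticipate is precisely the endomorphism-ring identification $S\cong\Rend\cC T$ (and dually $R\cong\Lend\cD T$): the definition of a herd only builds $S$ in via the ground ring $\beta:S\to B$, whereas Galois-comodule language requires $S$ to be recovered intrinsically as the ring of comodule endomorphisms. Pinning this down should use that $\gamma_A$ is the coaction together with faithful flatness of $T$ as an $S$-module (from tameness), so that a $\cC$-colinear map is determined by its interaction with $\gamma$ and hence by left $S$-action; here the purity/faithful-flatness hypotheses are indispensable, and under hypothesis~(ii) one instead leans on the splitting of $\beta$ to recover $S$ inside $B$. Once this identification is secured, the remaining checks are the routine bookkeeping of matching $\psi$, $\cC$, and the coaction across the isomorphisms already constructed.
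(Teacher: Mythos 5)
Your proposal is correct and follows essentially the same route as the paper: $(b)\Rightarrow(a)$ by citing Example~\ref{ex.Galois}, and $(a)\Rightarrow(b)$ by invoking Theorem~\ref{thm.entw} and identifying $\can$ with the isomorphism of Proposition~\ref{prop.iso}. The one step you flag as ``the real work,'' namely $S=\Rend\cC T$, is handled in the paper exactly as you anticipate: from $s\gamma(x)=\gamma(sx)$ one applies $\hev\ot_S T$ and diagram~\eqref{eq.coev} to get $s\ot_S x=1_B\ot_S sx$ in $B\ot_S T$, and then concludes via faithful flatness of ${}_ST$ (which follows from ${}_SB$ faithfully flat plus $T$ being a progenerator of left $B$-modules) under hypothesis~(i), or by applying $\pi_\beta\ot_S T$ under hypothesis~(ii).
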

\begin{proof}
(a) $\Ra$ (b) Theorem~\ref{thm.entw} implies that there is an entwining as stated and that $T$ is a right entwined module (i.e.\  a right comodule of the coring $\cC = A\ot_R C$) with coaction $\gamma$ (note that by Corollary~\ref{cor.faithflat}, $T_R$ is faithfully flat under condition (i)). By construction, $T^*\ot_S T \cong A\ot_R C$, with the isomorphism described in the proof of Proposition~\ref{prop.iso} which, with the choice of the coaction on $T$, coincides with the map $\can$. Thus it only remains to identify $S$ with the endomorphism ring $\Rend \cC T$. Since $B = \rend A T$, $\Rend \cC T$ is a subalgebra of $B$ consisting of all $s\in B$ such that, for all $x\in T$,
$$
s\gamma(x) = \gamma (sx).
$$
Obviously, $S\subseteq \Rend \cC T$. Apply the map $\hev\ot_S T$ to this equality and use diagram \eqref{eq.coev} to find that 
\begin{equation}\label{SisEndCT}
s\ot_S x =  1_B \ot_Ssx.
\end{equation} 

If hypothesis (i) holds, then $B$ is faithfully flat as a left $S$-module  and -- by the fact that $B$ is an endomorphism ring of a progenerator -- $T$ is a progenerator of left $B$-modules, $T$ is also faithfully flat  as a left $S$-module. Thus  the equality $s\ot_S x =  1_B \ot_Ssx$, for all $x\in T$, implies that $s\in S$, hence $S=\Rend \cC T$.

On the other hand, suppose that there is an $S$-bimodule map $\pi_\beta: B\to S$, such that $\pi_\beta\circ \beta= S$. Combining $\pi_\beta$ with the inclusion $\Rend \Cc T\subseteq \rend A T=B$, we obtain a map $\pi:\Rend \Cc T\to S$. Clearly, $\pi$ is a retraction for the inclusion $S\subseteq \Rend \cC T$. If we apply $\pi_\beta\ot_ST$ to \eqref{SisEndCT}, then we find $\pi(s)x= sx$, which means exactly that $S= \Rend \cC T$.

(b) $\Ra$ (a) Follows by Example~\ref{ex.Galois}. 

The equivalence of (a) and (c) is proven by symmetric arguments.
\end{proof}
 
 \begin{remark}\label{rem.torsor.galois}
 With the assumptions of Theorem~\ref{thm.torsor.galois}, there is a bijective correspondence between the following sets:
 \begin{blist}
 \item the set of shepherds $\gamma: T\to T\ot_R T^*\ot_S T$;
 \item the set of right entwining structures $\psi: C\ot_R A \to A\ot_R C$ over $R$ such that $T$ is a right Galois comodule over $\cC = A\ot_R C$ with $S=\Rend \cC T$;
\item the set of left entwining structures $\varphi: B\ot_S D\to D\ot_S B$ over $S$ such that $T$ is a left Galois comodule over $\cD = D\ot_S B$ with $R= \Lend \cD T$.
\end{blist}

Starting with $\gamma$ one constructs the $R$-coring $C\subseteq T^*\ot_S T$ and entwining $\psi$ as in Theorem~\ref{thm.entw}. The translation map $\tau : C\to T^*\ot_S T$ (cf.\ Example~\ref{ex.Galois}) is simply the obvious inclusion, and since the $C$-coaction on $T$ is given by $\gamma$, the procedure of obtaining a shepherd from $\tau$ described in Example~\ref{ex.Galois} reproduces $\gamma$.

Starting with an entwining map $\psi$ and the translation map $\tau: C\to T^*\ot_S T$, one defines $\gamma$ as in Example~\ref{ex.Galois}. Using the fact that $\tau(c) = \can^{-1}(1_A\ot_R c)$ one easily finds that the image of $\tau$ is in the $R$-coring $\bar{C}$ defined by equaliser \eqref{def.C}. The corestriction of $\tau$ establishes then an isomorphism of $C$ with $\bar{C}$. Explicitly, the inverse of $\tau$ is $\bar{C}\ni \sum_i f^i\ot_S x^i\mapsto \sum_i f^i(x^i\sw 0)x^i\sw 1$. By Theorem~\ref{thm.entw} there is an entwining map $\bar{\psi} : \bar{C}\ot_R A\to \bar{C}\ot_R A$. Using the $A$-linearity of $\tau$, \eqref{eq.a-lin}, one finds that the composition
$$
\xymatrix{ C\ot_R A \ar[rr]^{\tau\ot_R A} && \bar{C}\ot_R A \ar[rr]^{\bar{\psi}} && A\ot_R\bar{C} \ar[rr]^{A\ot_R \tau^{-1}} && A\ot_R C,}
$$
equals $\psi$. 
\end{remark}

\begin{remark}
In some interesting situations, condition (ii) of Theorem \ref{thm.torsor.galois} implies already condition (i). This can be seen as follows. Let $T$ be a $B$-$A$ bimodule that is a progenerator as right $A$-module. Then, by applying the Hom-tensor relations,  we obtain the following natural isomorphisms
$${_S\Hom}(T,-)\simeq  {_S\Hom}(B\ot_BT,-) \simeq {_S\Hom}(B,{\Hom_A}(T,-))$$
and 
$${_S\Hom}(B,-)\simeq {_S\Hom}(T\ot_A \hT,-)\simeq {_S\Hom}(T,\Hom_B(\hT,-))\ .$$
Therefore, ${_SB}$ is projective if and only if ${_ST}$ is projective. 
Under this projectivity condition, ${_SB}$ is faithfully flat if and only if $\beta$ is a split monomorphism of left $S$-modules; see \cite[2.11.29]{Row:rin}. 

Similarly one proves that ${A_R}$ is projective if and only if $T_R$ is projective. Under this condition, $A_R$ is faithfully flat if and only if $\alpha$ is a split monomorphism of right $R$-modules.

In particular, if ${_SB}$ and ${A_R}$ are projective and, $\alpha$ and $\beta$ are split extensions (condition (ii) of Theorem \ref{thm.torsor.galois}), then ${_SB}$ and ${A_R}$ are faithfully flat (condition (i)).
\end{remark}

\section{Herds versus coherds}\label{se:co-torsors}
\setcounter{equation}0
By formally dualising the definition of bimodule herds, the notion of a {\em bicomodule coherd} is introduced. It is shown that a tame bimodule herd is also a bicomodule coherd of corresponding corings.

\subsubsection*{Bicomodule coherds}

\begin{definition}
Let $C$ be an $R$-coring and $D$ an $S$-coring. Consider a bicomodule $(X,\rho^{D,X},\rho^{X,C})\in{^D\M^C}$. A bicomodule $(\oX,\rho^{C,\oX},\rho^{\oX,D})\in{^C\M^D}$ is called a \emph{companion} of $X$ if there exist a $C$-bicomodule map
$$\cov:C\to \oX\ot_SX,$$
and a $D$-bicomodule map
$$\ocov:D\to X\ot_R\oX, $$
such that the following diagrams commute,
\begin{equation}\label{diag.X}
\xymatrix{
X \ar[rr]^-{\rho^{X,C}} \ar[d]_{\rho^{D,X}} && X\ot_RC \ar[d]^{X\ot_R\cov}\\
D\ot_SX \ar[rr]_-{\ocov\ot_SX} && X\ot_R\oX\ot_SX \ ,
}
\end{equation}
\begin{equation}\label{diag.oX}
\xymatrix{
\oX \ar[rr]^-{\rho^{C,\oX}} \ar[d]_{\rho^{\oX,D}} && C\ot_R\oX \ar[d]^{\cov\ot_R\oX}\\
\oX\ot_SD \ar[rr]_-{\oX\ot_S\ocov} && \oX\ot_SX\ot_R\oX \ .
}
\end{equation}
Furthermore,  a $D$-$C$ bicomodule $X$ with a companion $\oX$ is called a {\em bicomodule coherd} if there exists an $S$-$R$ bimodule map 
$$\chi:X\ot_R\oX\ot_SX\to X,$$
rendering commutative the following diagrams,
\begin{equation}\label{diag.C}
\xymatrix{
X\ot_RC \ar[d]_{X\ot_R\varepsilon_C} \ar[rr]^-{X\ot_R\cov} && X\ot_R\oX\ot_SX \ar[d]^{\chi} \\
X\ot_RR \ar[rr]_-{\cong} && X \ ,
}
\end{equation}
\begin{equation}\label{diag.D}
\xymatrix{
D\ot_SX \ar[d]_{\varepsilon_D\ot_SX} \ar[rr]^-{\ocov\ot_SX} && X\ot_R\oX\ot_SX \ar[d]^{\chi} \\
S\ot_SX \ar[rr]_-{\cong} && X \ ,
}
\end{equation}
\begin{equation}\label{diag.coass}
\xymatrix{
X\ot_R\oX\ot_SX\ot_R\oX\ot_SX \ar[d]_{\chi\ot_R\oX\ot_SX} \ar[rr]^-{X\ot_R\oX\ot_S\chi}  && X\ot_R\oX\ot_SX \ar[d]^{\chi} \\
X\ot_R\oX\ot_SX \ar[rr]_-{\chi} && X \ .
}
\end{equation}
\end{definition}

The theory of herds as developed in Section \ref{se.torsors} can now be formally dualised. In particular, given  a bicomodule coherd $X$, the $C$-bicomodule  $\oX\ot_SX$  is a non-unital ring (over $R$) with  multiplication 
$$
\mu_X:= \oX \ot_S \chi : \oX\ot_S X \ot_R \oX \ot_S X \to \oX \ot_S X .
$$
The map $\cov : C \to \oX\ot_SX$  is a {\em $C$-unit} for $\oX\ot_SX$, i.e.\ the following diagram
is commutative
$$
\xymatrix{ C\ot_R\oX\ot_SX \ar[d]_{\cov \ot_R \oX\ot_SX} && \oX\ot_SX \ar[d]^{\oX\ot_SX} \ar[ll]_-{\rho^{C,\oX}\ot_S X} \ar[rr]^-{\oX\ot_S\rho^{X,C}}
&& \oX\ot_SX\ot_R C \ar[d]^{\oX\ot_SX\ot_R\cov} \\
\oX\ot_SX\ot_R \oX\ot_SX \ar[rr]_-{\mu_X} && \oX\ot_SX && \oX\ot_SX\ot_R \oX\ot_SX \ar[ll]^-{\mu_X} \ .}
$$

Symmetrically, $X\ot_R\oX$ is a ring with product $\chi \ot_R \oX$ and with
a $D$-unit $\ocov$.
Furthermore, one can define an $R$-bimodule $A'$ as the following coequaliser
\[
\xymatrix{
C\ot_R\oX\ot_SX \ar@<.5ex>[rrrr]^-{(\oX\ot_S\chi)\circ(\cov\ot_R\oX\ot_SX)} \ar@<-.5ex>[rrrr]_-{\varepsilon_C\ot_R\oX\ot_SX} &&&& \oX\ot_SX \ar[rr]^{\pi_A} && A' \ .
}
\]
Since the tensor functor preserves coequalisers, the map $\mu_X$ descents to 
the associative product $\mu_{A'} : A'\ot_R A'\to A'$, by the formula
$$
\mu_{A'} \circ (\pi_A \ot_R \pi_A) = \pi_A \circ \mu_X.
$$
Suppose $C$ is faithfully flat as a left $R$-module, then by a (dual) descent argument, we can construct a unit for the $R$-ring $A'$ as follows.
Consider the following split coequaliser of $R$-bimodules
$$
\xymatrix{
C\ot_RC \ot_RC \ar@<2ex>[rrrr]^-{C\ot_R\eps_C\ot_R C} \ar@<-.5ex>[rrrr]^-{\eps_C\ot_R C\ot_R C} &&&& C\ot_R C\ar@<1.5ex>[llll]^-{C\ot_R\Delta_C} \ar@<.5ex>[rr]^-{\eps_C\ot_R C} &&C\ar@<.5ex>[ll]^-{\Delta_C} \ .
}
$$
Since tensoring with a faithfully flat module reflects coequalisers, one obtains
the following coequaliser or $R$-bimodules
$$
\xymatrix{
C\ot_RC  \ar@<.5ex>[rrrr]^-{C\ot_R\eps_C} \ar@<-.5ex>[rrrr]_-{\eps_C\ot_R C} &&&& C \ar[rr]^-{\eps_C} &&R \ .
}
$$
By the universal property of coequalisers there exists a unique $R$-bimodule map
$\alpha': R\to A'$ such that
$$
\pi_A\circ\cov = \alpha'\circ\eps_C.
$$
One easily checks that, for all $a\in A'$, $\mu_{A'}(a\ot_R \alpha'(1_R)) = \mu_{A'}(\alpha'(1_R)\ot_R a) = a$, i.e.\ that $A'$ is a (unital) $R$-ring with the unit map $\alpha'$.

In a symmetric way, if $D$ is a faithfully flat left or right $S$-module  one obtains the (unital) $S$-ring $B'$ as the coequaliser
\[
\xymatrix{
X\ot_R\oX\ot_SD \ar@<.5ex>[rrrr]^-{(\chi\ot_R\oX)\circ(X\ot_R\oX\ot_S\ocov)} \ar@<-.5ex>[rrrr]_-{X\ot_R\oX\ot_S\varepsilon_D} &&&& X\ot_R\oX \ar[rr]^{\pi_B} && B' \ .
}
\]
The unit map in $B'$ is the unique morphism $\beta': S\to B'$ such that $\pi_B\circ\ocov = \beta'\circ\varepsilon_D$.

\subsubsection*{Construction of coherds}

Given an $A$-coring $\cC$ and a right $\cC$-comodule $T$, set $S = \Rend \cC T$. By the {\em strong structure theorem} for $T$ is meant that the functor $-\ot_ST$ is an equivalence of the categories $\M_S$ and $\M^\cC$. 

\begin{lemma}\label{lemma.coinvariants}
Let $\Cc$ be an $A$-coring, $T$ a right $\Cc$-comodule for which the strong structure theorem holds. Then for all $N\in\M_A$ and $M\in{_A\M^\Cc}$,
the canonical morphism
$$N\ot_A\Hom^\Cc(T,M)\to \Hom^\Cc(T,N\ot_AM)$$
is an isomorphism.
\end{lemma}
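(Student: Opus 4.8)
The plan is to deduce the isomorphism directly from the equivalence furnished by the strong structure theorem, rather than from any finiteness or projectivity property of $T$. Writing $S=\Rend\Cc T$, the hypothesis says that $-\ot_S T\colon\M_S\to\M^\Cc$ is an equivalence of categories; its quasi-inverse is the right adjoint $\Hom^\Cc(T,-)\colon\M^\Cc\to\M_S$, so that both the unit and the counit of the adjunction $-\ot_S T\dashv\Hom^\Cc(T,-)$ are natural isomorphisms. I would use the counit to present an arbitrary $M\in{_A\M^\Cc}$ as an object induced from $\M_S$, and then transport the functor $N\ot_A(-)$ across the equivalence.

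In detail, I would first set $V:=\Hom^\Cc(T,M)$ and observe that the counit $\varepsilon_M\colon V\ot_S T\to M$, $f\ot_S t\mapsto f(t)$, is an isomorphism of right $\Cc$-comodules. Since the left $A$-multiplications on $M$ are right $\Cc$-comodule endomorphisms, naturality of $\varepsilon$ in $M\in\M^\Cc$ shows that $\varepsilon_M$ is left $A$-linear; thus $V$ carries its natural $A$-$S$-bimodule structure and $\varepsilon_M$ is an isomorphism of $A$-$\Cc$-bicomodules. Tensoring on the left with $N\in\M_A$ and using associativity of the tensor product then yields an isomorphism of right $\Cc$-comodules $N\ot_A M\cong N\ot_A(V\ot_S T)\cong(N\ot_A V)\ot_S T$. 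Applying $\Hom^\Cc(T,-)$ and the unit isomorphism $W\xrightarrow{\sim}\Hom^\Cc(T,W\ot_S T)$ (valid for every $W\in\M_S$, here with $W=N\ot_A V$) gives the chain
\[
\Hom^\Cc(T,N\ot_A M)\cong\Hom^\Cc\bigl(T,(N\ot_A V)\ot_S T\bigr)\cong N\ot_A V=N\ot_A\Hom^\Cc(T,M).
\]

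It then remains to check that this composite is the canonical morphism $n\ot_A f\mapsto[t\mapsto n\ot_A f(t)]$. Tracing $n\ot_A f$ through the three maps --- the unit sends it to $[t\mapsto(n\ot_A f)\ot_S t]$, associativity rewrites this as $[t\mapsto n\ot_A(f\ot_S t)]$, and the counit (tensored with $N$) sends $n\ot_A(f\ot_S t)$ to $n\ot_A f(t)$ --- recovers exactly $[t\mapsto n\ot_A f(t)]$, as required. The individual steps are short; the only genuine obstacle is the bookkeeping of the several module and comodule structures, in particular verifying the left $A$-linearity of the counit via naturality and confirming that the associativity and unit isomorphisms are compatible with the $A$-actions, so that the composite really is the canonical map. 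A more computational alternative would be to verify the canonical morphism for $N=A$, extend to free modules using that $\Hom^\Cc(T,-)$ (being half of an equivalence) preserves direct sums, and then pass to a free presentation of $N$ using right exactness of both sides; this avoids tracing structures but requires in addition the exactness of $-\ot_A M\colon\M_A\to\M^\Cc$ and of $\Hom^\Cc(T,-)$.
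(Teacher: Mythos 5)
Your proof is correct and follows essentially the same route as the paper: both arguments exploit the equivalence $-\ot_S T \dashv \Hom^\Cc(T,-)$ twice, via the counit isomorphisms for $M$ and for $N\ot_A M$ together with the $A$-linearity of the counit and the associativity $N\ot_A(V\ot_S T)\cong(N\ot_A V)\ot_S T$. The only cosmetic difference is that the paper concludes by tensoring the canonical map with $T$ and invoking faithful flatness of ${}_S T$ to reflect the isomorphism, whereas you transport back through the quasi-inverse $\Hom^\Cc(T,-)$ and verify explicitly that the composite is the canonical morphism; these are two phrasings of the same argument.
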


\begin{proof}
This follows by a double application of the equivalence of categories between $\M_S$ and $\M^\Cc$ through the functors $-\ot_ST$ and $\Hom^\Cc(T,-)$,
$$N\ot_A\Hom^\Cc(T,M)\ot_ST\cong N\ot_AM\cong \Hom^\Cc(T,N\ot_AM)\ot_ST.$$
Since $T$ is faithfully flat as a left $S$-module, the claim follows immediately.
\end{proof}

Let $(T,\gamma)$ be a tame $B$-$A$ herd.
Then we can consider the $R$-coring $C$, which is entwined with the $R$-ring $A$ by $\psi$ and the $S$-coring $D$ which is entwined with the $S$-ring $B$ by $\phi$ as in Theorem \ref{thm.entw}. Denote as before $\Cc=A\ot_RC$ and $\Dd=D\ot_SB$ for the associated $A$-coring and $B$-coring. Recall from \cite[32.8 (2)]{BrzWis:cor} that $C\ot_RA$ is a right $\Cc$-module (i.e.\ a right entwined module): the right $A$-module structure is given by $C\ot_R\mu_A$, where $\mu_A$ is the multiplication on $A$, and the right $C$-coaction is given by $(C\ot_R\psi)\circ(\Delta_C\ot_RA)$. 
For an element $\hx\ot_Sx\ot_Ra\in C\ot_RA$ (representing a finite sum of simple tensors), the right $C$-coaction reads explicitly as 
\begin{equation}\label{eq.CAcoaction}
\varrho^{C\ot_RA}(\hx\ot_Sx\ot_Ra)=\hx\ot_Sx\sut 1\ot_R\ev(x\sut 2\ot_S (x\sut 3a)\sut 1 )\ot_R(x\sut 3a)\sut 2\ot_S (x\sut 3a)\sut 3.
\end{equation}
Symmetrically, $B\ot_SD$ is a left $\Dd$-comodule.

\begin{theorem}\label{thm.barT}
Let $(T,\gamma)$ be a tame $B$-$A$ herd. Consider the $R$-$S$ bimodule $\oT=(C\ot_R\hT)\cap (\hT\ot_S D)$. Then 
\begin{zlist}
\item $h_1: \oT\to \Hom^\Cc(T,C\ot_RA),\ 
h_1( \bar{x})( y) = (\hT\ot_S T\ot_R\ev)(\bar{x} \ot_S y)$, 
is an isomorphism of $R$-$S$ bimodules;
\item $h_2: \oT\to {^\Dd\Hom}(T,B\ot_SD),\ 
h_2( \bar{x})( y) = (\hev\ot_S T\ot_R\hT)(y\ot_R\bar{x} )$
 is an isomorphism of $R$-$S$ bimodules;
\item $h:\oT\to \hT,\ 
h=\ev\ot_A \hT\mid_{\oT} = \hT \ot_B \hev\mid_{\oT}$,
 is an $R$-$S$ bimodule map; 
\item $\oT$ is a $C$-$D$ bicomodule.
\end{zlist}
\end{theorem}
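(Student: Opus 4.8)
The plan is to use tameness to land in the setting where the \emph{strong structure theorem} is available, and then to read off all four assertions from it together with the explicit formal--dual identities \eqref{diag.A}--\eqref{diag.B} and the herd axioms \eqref{eq.coev}--\eqref{eq.ass}. By Lemma~\ref{lemma.formal.dual}(2) and Corollary~\ref{cor.faithflat} a tame herd $T$ is a progenerator right $A$-module with $B=\rend AT$ and faithfully flat as a left $S$-module, and by Theorem~\ref{thm.torsor.galois} it is a Galois $\Cc$-comodule with $S=\Rend\Cc T$. Hence $-\ot_ST\colon\M_S\to\M^\Cc$ is an equivalence and Lemma~\ref{lemma.coinvariants} applies; symmetrically the structure theorem holds for the left $\Dd$-comodule $T$ with $R=\Lend\Dd T$.

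For (1) I would first note that $h_1$ is well defined: since $\bar x\in C\ot_R\hT$ the value $h_1(\bar x)(y)$ lies in $C\ot_RA$, and right $A$-linearity is immediate from the $A$-bilinearity of $\ev$. The real content is that $h_1(\bar x)$ is right $\Cc$-colinear, and this is exactly where the second defining condition of $\oT$, namely $\bar x\in\hT\ot_SD$ (the equaliser \eqref{def.D}), is used, matched against the explicit entwined coaction \eqref{eq.CAcoaction}. For bijectivity one uses that $T_A$ is finitely generated projective with $\hT\cong T^*$ (Lemma~\ref{lemma.formal.dual}(1)(a)), so that $h_1$ extends to an isomorphism $C\ot_R\hT\xrightarrow{\sim}\rhom{A}{T}{C\ot_RA}$ of \emph{all} right $A$-linear maps; it then remains to see that under this isomorphism the $\Cc$-colinear maps are precisely those coming from the subspace $\hT\ot_SD$, i.e.\ from $\oT$. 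Equivalently, one produces the inverse from a dual basis $\{e_i\in T,\ \he_i\in\hT\}$ with $\hev\left(\sum_ie_i\ot_R\he_i\right)=1_B$, sending $f$ to $\sum_i$ of the element obtained from $f(e_i)\in C\ot_RA$ by letting its $A$-component act on $\he_i$; that $h_1$ composed with this is the identity is a short dual-basis computation, while the reverse composition, together with the claim that this element indeed lies in $\hT\ot_SD$, is the step that consumes the colinearity of $f$. Assertion (2) is entirely symmetric, interchanging $C,\ev,A$ with $D,\hev,B$.

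Assertion (3) is easy and in fact holds before restricting to $\oT$: the commuting square \eqref{diag.B} says precisely that $\ev(\hx\ot_Sx)\hat z=\hx\,\hev(x\ot_R\hat z)$ for every $\hx\ot_Sx\ot_R\hat z\in\hT\ot_ST\ot_R\hT$, so the two prescriptions for $h$ already agree there; restricting to $\oT$ and using $\eps_C=\ev\mid_C$, $\eps_D=\hev\mid_D$, which take values in $R$ and $S$ by Theorem~\ref{thm.entw}, yields the stated map, and $R$-$S$-bilinearity follows from the bimodule structure of $\oT$ and the $A$-$B$-bilinearity of $\ev$ and $\hev$. For (4) I would define the two coactions as the restrictions $\rho^{C,\oT}=(\Delta_C\ot_R\hT)\mid_{\oT}$ and $\rho^{\oT,D}=(\hT\ot_S\Delta_D)\mid_{\oT}$. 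Using the Sweedler forms $\Delta_C(\hx\ot_Sx)=\hx\ot_S\gamma(x)$ and $\Delta_D(x\ot_R\hx)=\gamma(x)\ot_R\hx$ from Theorem~\ref{thm.entw}, the point is that these land in $C\ot_R\oT$ and $\oT\ot_SD$; concretely one must see that the central tensorands $x\sut{2}\ot_Sx\sut{3}$ and $x\sut{1}\ot_Rx\sut{2}$ remain in $C$ and $D$, which is the same purity/faithful-flatness argument (Lemma~\ref{lemma.split}) used to prove $\Delta_C(C)\subseteq C\ot_RC$ and $\Delta_D(D)\subseteq D\ot_SD$ in Theorem~\ref{thm.entw}. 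Counitality and coassociativity are inherited from $C$ and $D$, and the $C$-$D$-bicomodule compatibility reduces, after writing out both composites, to the coassociativity \eqref{eq.ass} of the shepherd $\gamma$ applied to the central copy of $T$. Alternatively, once (1) and (2) are in hand, these coactions can be obtained functorially by transporting the left $C$-comodule structure of $C\ot_RA$ and the right $D$-comodule structure of $B\ot_SD$ through $h_1,h_2$ and Lemma~\ref{lemma.coinvariants}.

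The hard part will be the colinearity halves of (1) and (2): reconciling the entwined coaction \eqref{eq.CAcoaction} on $C\ot_RA$ with the equaliser \eqref{def.D} defining $\oT$, and verifying that the dual-basis inverse genuinely lands in $\oT$. Everything else is either a direct diagram chase using \eqref{diag.A}--\eqref{diag.B} or a transcription of the purity arguments already carried out for Theorems~\ref{thm.entw} and~\ref{thm.torsor.galois}.
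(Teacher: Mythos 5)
Your proposal is correct and, for parts (1)--(3), essentially reproduces the paper's argument: the same explicit check that $h_1(\bar x)$ is $\Cc$-colinear by matching the coaction \eqref{eq.CAcoaction} against the condition $\bar x\in\hT\ot_S D$, and the same dual-basis inverse $k_1(\varphi)=(\varphi\ot_A\hT)\left(\sum_i e_i\ot_A\he_i\right)$ built from $\hev\left(\sum_i e_i\ot_R\he_i\right)=1_B$ (the paper, incidentally, does not even spell out the point you rightly flag, namely that $k_1(\varphi)$ actually lands in $\oT$ -- that is where the colinearity of $\varphi$ is consumed). The one genuine divergence is in part (4). Your primary route -- checking that the middle tensorands stay in $C$ and $D$ and then invoking purity as in Lemma~\ref{lemma.split} -- only places the image of the would-be coaction in $(C\ot_R C\ot_R\hT)\cap(\hT\ot_S D\ot_S D)$; to conclude that it lies in $C\ot_R\oT$ you must additionally commute $C\ot_R-$ (resp.\ $-\ot_S D$) past the intersection defining $\oT$, and that requires a flatness hypothesis on $C$ or $\hT$ over the base ring which tameness does not obviously supply (compare the extra flatness assumption the paper has to impose in Theorem~\ref{reconstr}(3)). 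The paper sidesteps exactly this issue by the route you list only as an alternative: it pushes $\hT\ot_S\gamma\ot_R\hT$ through the $\Hom^\Cc$-description of part (1), using Lemma~\ref{lemma.coinvariants} to identify $\Hom^\Cc(T,C\ot_R C\ot_R A)$ with $C\ot_R\Hom^\Cc(T,C\ot_R A)\cong C\ot_R\oT$, and then applies $C\ot_R k_1$. So promote your ``alternative'' functorial argument to the actual proof of (4) and keep the purity computation as motivation; with that adjustment the proposal matches the paper.
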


\begin{proof}
(1) Elements of  $\Hom^\Cc(T,C\ot_RA)$ are exactly right $A$-linear and right $C$-colinear morphisms $T\to C\ot_R A$. Since $\ev$ is right $A$-linear,  for any $\bar{x}\in \oT$, $h_1(\bar{x})$ is right $A$-linear as well. To check that $h_1(\bar{x})$ is right $C$-colinear, write
$\bar{x} = \hx\ot_Sx\ot_R\hy\in \oT$ (summation implicit), 
and calculate,
\begin{eqnarray*}
&&\hspace{-0.5cm}\rho^{C\ot_RA}(h_1(\hx\ot_Sx\ot_R\hy)(y))\\
&&\hspace{-0.5cm}=\hx\!\ot_S\!x\sut 1\!\ot_R\!\ev(x\sut 2\!\ot_S\! (x\sut 3\ev(\hy\!\ot_S\!y))\sut 1 )\!\ot_R\!(x\sut 3\ev(\hy\!\ot_S\!y))\sut 2\!\ot_S\! (x\sut 3\ev(\hy\!\ot_S\!y))\sut 3\\
&&\hspace{-0.5cm}=\hx\!\ot_S\!x\sut 1\!\ot_R\!\ev(x\sut 2\!\ot_S\! (\hev(x\sut 3\!\ot_R\!\hy)y)\sut 1 )\!\ot_R\!(\hev(x\sut 3\!\ot_R\!\hy)y)\sut 2\!\ot_S\! (\hev(x\sut 3\!\ot_R\!\hy)y)\sut 3\\
&&\hspace{-0.5cm}=\hx\!\ot_S\!x\!\ot_R\!\ev(\hy\!\ot_S\! y\sut 1 )\!\ot_R\!y\sut 2\!\ot_S\! y\sut 3
=h_1(\hx\!\ot_S\!x\!\ot_R\!\hy)(y\sut 1)\!\ot_R\!y\sut 2\!\ot_S\! y\sut 3\\
&&\hspace{-0.5cm}=(h_1(\hx\!\ot_S\!x\!\ot_R\!\hy)\!\ot_R\!\hT\!\ot_S\!T)(\gamma(y)),
\end{eqnarray*}
where we used \eqref{eq.CAcoaction} in the first equation,  diagram \eqref{diag.A} in the second equality and the defining property of $D$ applied on the element $\hx\ot_Sx\ot_R\hy\in\oT\subset \hT\ot_SD$ in the third equality.
Therefore, $h_1(\bar{x})$ is right $C$-colinear. Since $\hev$ is surjective, there are
$e_i\in T$ and $\he_i\in \hT\cong T^*$, such that $\hev (\sum_i e_i\ot_R \he_i) = 1_B$.
Hence it is possible to define a map
$$
k_1:\Hom^\Cc(T,C\ot_RA)\to \oT,\qquad  \varphi\mapsto \left (\varphi\ot_A\hT\right)\left(\sum_i e_i\ot_A\he_i\right).
$$
Diagram \eqref{diag.B} and the property $\hev(\sum_i e_i\ot_R\he_i) =1_B$ immediately 
imply that $k_1\circ h_1 = \oT$. 
In the other direction,
$$
h_1\circ k_1(\varphi)(x)=\sum_i\varphi(e_i)\ev(\he_i\ot_S x)=\sum_i \varphi(e_i\ev(\he_i\ot_S x))=\varphi(x), 
$$
by the right $A$-linearity of $\varphi$, diagram \eqref{diag.A} and  $\hev(\sum_i e_i\ot_R\he_i) =1_B$.

(2) This is proven by symmetric arguments.

(3) Obvious.

(4) We first prove that $\oT$ is a left $C$-comodule. By Theorem \ref{thm.entw}, 
$C$ is an $R$-coring with comultiplication $T\ot_R\gamma\mid_C$, hence $(T\ot_R\gamma)(C)\subset C\ot_RC$. Similarly, $D$ is an $S$-coring with comultiplication $\gamma\ot_RT\mid_D$ hence $(\gamma\ot_ST)(D)\subset D\ot_SD$. Therefore, it follows that $(T\ot_R\gamma\ot_ST)(\oT)\subset (C\ot_RC\ot\hT)\cap (\hT\ot_SD\ot_SD)$. Consider $C\ot_RC\ot_RA$ as a right $\Cc$-comodule with coaction $C\ot_R \varrho^{C\ot_RA}$. By a similar computation as for $h_1$, we find that the map 
\begin{eqnarray*}
&\bar{h}:\!\!\! &(T\!\ot_R\!\gamma\!\ot_S\!T)(\oT)\subset (C\!\ot_R\! C\! \ot_R\! \hT)\cap (\hT\! \ot_S\! D\! \ot_S\! D)\to \Hom^\Cc(T,C\!\ot_R\! C\!\ot_R\! A),\\
&&\bar{h}(c\ot_R c'\ot_R \hx)(x)=c\ot_R c'\ot_R \ev(\hx\ot_S x) ,
\end{eqnarray*}
is well-defined. 
Applying Lemma \ref{lemma.coinvariants}, we therefore find a well-defined map 
$$
\bar{h}\circ (T\ot_R\gamma\ot_RT):\oT\to \Hom^\Cc(T,C\ot_RC\ot_RA)\cong C\ot_R\Hom^\Cc(T,C\ot_RA).
$$
Hence $(C\ot_R k_1)\circ \bar{h}\circ (T\ot_R\gamma\ot_RT):\oT\to C\ot_R\oT$ defines a comultiplication on $\oT$. Up to an isomorphism this is just the restriction of the map $T\ot_R\gamma\ot_RT$. Coassociativity and counitiality now follow immediatelly from the diagrams \eqref{eq.ass} and \eqref{eq.ev}. By symmetric arguments one shows that $\oT$ is a right $D$-comodule. The coassociativity between left $C$- and right $D$-coaction follows from diagram \eqref{eq.ass}.
\end{proof}

\begin{theorem}\label{thm.cotor}
Let $(T,\gamma)$ be a tame $B$-$A$ herd.
Consider  corings $C$ and $D$ of Theorem \ref{thm.entw}. Then the 
$R$-$S$ bimodule $\oT=(C\ot_R\hT)\cap (\hT\ot_S D)$ of Theorem \ref{thm.barT} is a companion of $T$ and 
$(T,\chi)$ is a $D$-$C$ coherd, where $\chi:T\ot_R\oT\ot_ST\to T$ is given by
\begin{equation}
\label{chi}
\chi(x\ot_R\hx\ot_Sy\ot_R\hy\ot_Rz)=\hev(x\ot_R\hx)\, y \, \ev(\hy\ot_Rz),
\end{equation}
for all  $x, z\in T$ and $\hx\ot_Sy\ot_R\hy\in \oT\subset \hT\ot_ST\ot_R\hT$ (summation implicit).
\end{theorem}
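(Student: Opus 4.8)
The plan is to exhibit explicit companion maps $\cov,\ocov$ and to take $\chi$ as prescribed, then to reduce every axiom to the herd conditions \eqref{eq.coev}, \eqref{eq.ev}, \eqref{eq.ass} and the formal-dual relations \eqref{diag.A}, \eqref{diag.B}. I would first fix the bicomodule data. By Theorem~\ref{thm.entw}, $T$ carries a right $C$-coaction and a left $D$-coaction, both equal to $\gamma$ but regrouped as $\gamma(T)\subseteq T\ot_R C$ and $\gamma(T)\subseteq D\ot_S T$; applying $\gamma$ twice and using \eqref{eq.ass} shows these commute, so $T\in{}^D\M^C$. The companion candidate $\oT\in{}^C\M^D$ comes from Theorem~\ref{thm.barT}(4), whose proof realises the left $C$- and right $D$-coactions of $\oT$, up to the isomorphisms $h_1,h_2$, as the operation of applying $\gamma$ to the middle leg of $\hx\ot_S u\ot_R\hy\in\oT\subset\hT\ot_S T\ot_R\hT$. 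Throughout, the construction is invariant under the formal symmetry interchanging $A,\ev,R,C$ with $B,\hev,S,D$ and reversing the order of tensor factors; this symmetry exchanges the $C$-side and $D$-side assertions, so it suffices to treat one side of each.

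Next I would set $\cov=(\hT\ot_S\gamma)\mid_C$ and $\ocov=(\gamma\ot_R\hT)\mid_D$, that is
$$
\cov(\hx\ot_S x)=\hx\ot_S x\sut1\ot_R x\sut2\ot_S x\sut3,\qquad \ocov(x\ot_R\hx)=x\sut1\ot_R x\sut2\ot_S x\sut3\ot_R\hx .
$$
The first point to settle is that these corestrict to $\oT\ot_S T$ and to $T\ot_R\oT$. For $\cov$ I show the image lies both in $(C\ot_R\hT)\ot_S T$, because $\Delta_C(C)\subseteq C\ot_R C\subseteq(C\ot_R\hT)\ot_S T$ by Theorem~\ref{thm.entw}, and in $(\hT\ot_S D)\ot_S T$, because the left $D$-coaction corestricts $\gamma$ to $D\ot_S T$ and hence $\hT\ot_S\gamma$ maps $\hT\ot_S T$ into $\hT\ot_S D\ot_S T$. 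To intersect the two containments I use that ${}_ST$ is flat (part of tameness), so that $-\ot_S T$ commutes with the intersection defining $\oT$; this gives $\cov(C)\subseteq\oT\ot_S T$, and the flatness of $T_R$ gives the symmetric statement for $\ocov$. That $\cov$ is then a morphism in ${}^C\M^C$ and that the companion square \eqref{diag.X} commutes are, once the coactions are written as ``$\gamma$ on the appropriate leg'', nothing but coassociativity \eqref{eq.ass}: both composites in \eqref{diag.X} compute the fivefold iterate $x\sut1\ot_R x\sut2\ot_S x\sut3\ot_R x\sut4\ot_S x\sut5$ (well defined by \eqref{eq.ass}), suitably regrouped. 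Diagram \eqref{diag.oX} and the colinearity of $\ocov$ follow symmetrically.

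It remains to treat $\chi$. It is well defined and $S$-$R$-bilinear because it is the restriction to $T\ot_R\oT\ot_S T$ of the manifestly well-defined map $x\ot_R\hx\ot_S y\ot_R\hy\ot_S z\mapsto\hev(x\ot_R\hx)\,y\,\ev(\hy\ot_S z)$ on the ambient $T\ot_R\hT\ot_S T\ot_R\hT\ot_S T$, with $S$ and $R$ acting through $\beta$ and $\alpha$ in agreement with the module structure of $T$. For the counit diagram \eqref{diag.C} I would collapse the composite $\chi\circ(T\ot_R\cov)$ by means of \eqref{eq.ev} in the form $x\sut1\ev(x\sut2\ot_S x\sut3)=x$, reducing it to $\hev(x\ot_R\hx)\,w$, and then apply \eqref{diag.A} as $\hev(x\ot_R\hx)\,w=x\,\ev(\hx\ot_S w)=x\,\eps_C(\hx\ot_S w)$; diagram \eqref{diag.D} is handled identically with \eqref{eq.coev}, $\hev$ and $\eps_D=\hev\mid_D$ replacing \eqref{eq.ev}, $\ev$ and $\eps_C$. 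Finally, for the associativity diagram \eqref{diag.coass}, writing a general element of its top-left corner as $x\ot_R(\hx_1\ot_S u\ot_R\hx_2)\ot_S y\ot_R(\hy_1\ot_S v\ot_R\hy_2)\ot_S z$, both composites take the form $\hev(x\ot_R\hx_1)\,(\,\cdot\,)\,\ev(\hy_2\ot_S z)$ with the two inner terms related by the single identity
$$
u\,\ev\big(\hx_2\ot_S\hev(y\ot_R\hy_1)\,v\big)=\hev\big(u\,\ev(\hx_2\ot_S y)\ot_R\hy_1\big)\,v ,
$$
so it suffices to verify this. It already holds on the ambient module: two applications of \eqref{diag.A} together with right $A$-linearity of $\ev$ reduce both sides to $u\,\ev(\hx_2\ot_S y)\,\ev(\hy_1\ot_S v)$.

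The step I expect to be the main obstacle is the well-definedness in the second paragraph: checking that the middle application of $\gamma$ really lands in the intersection $\oT=(C\ot_R\hT)\cap(\hT\ot_S D)$, rather than in one of the two factors only, and justifying the exchange of this intersection with the tensor functors $-\ot_S T$ and $T\ot_R-$. This is precisely where faithful flatness, i.e.\ tameness, is indispensable, and it also explains why the result is stated for tame herds. A secondary difficulty is the bookkeeping of the several coactions on $\oT$, $\oT\ot_S T$ and $T\ot_R\oT$, since Theorem~\ref{thm.barT} produces the coactions on $\oT$ only indirectly via $h_1,h_2$ and Lemma~\ref{lemma.coinvariants}; once they are rewritten as ``$\gamma$ on the middle leg'' the remaining verifications are routine chases through \eqref{eq.ass}, \eqref{eq.coev}, \eqref{eq.ev}, \eqref{diag.A} and \eqref{diag.B}.
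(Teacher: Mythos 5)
Your proposal is correct and follows essentially the same route as the paper's own proof: the same companion maps $\cov=\Delta_C=(\hT\ot_S\gamma)\mid_C$ and $\ocov=\Delta_D$, the same use of flatness of ${}_ST$ (resp.\ $T_R$) to commute $-\ot_ST$ with the intersection defining $\oT$, and the same reduction of \eqref{diag.X}--\eqref{diag.coass} to \eqref{eq.ass}, \eqref{eq.coev}, \eqref{eq.ev} and the formal-dual relations \eqref{diag.A}, \eqref{diag.B}. The explicit single identity you isolate for \eqref{diag.coass} checks out and is just a slightly more detailed rendering of the paper's appeal to bilinearity of $\ev$, $\hev$ together with \eqref{diag.A} and \eqref{diag.B}.
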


\begin{proof}
We  first prove that $\oT$ is a companion for $T$. To this end, we must define a $C$-bicomodule map $\cov:C\to \oT\ot_S T$. By means of the canonical inclusion $\iota:C\to \hT\ot_ST$, from the definition of $C$ as an equaliser, we can consider $\Delta_C$ as a map $\Delta_C:C\to C\ot_R\hT\ot_ST$. Furthermore, as $\gamma(T)\subset D\ot_ST$, we know that $\Delta_C(C)=(\hT\ot_S\gamma)(C)\subset \hT\ot_SD\ot_ST$. Since $T$ is flat as a left $S$-module the functor $-\ot_ST$ preserves all limits, so in particular intersections. Therefore 
$$(C\ot_R\hT\ot_ST)\cap (\hT\ot_SD\ot_ST) = ((C\ot_R\hT)\cap(\hT\ot_SD))\ot_ST=\oT\ot_ST.
$$
 This defines a $C$-bicolinear map 
$$
\cov=\Delta_C:C\to \oT\ot_ST.
$$
By similar arguments, 
$$
\ocov=\Delta_D:D\to T\ot_R\oT 
$$
is well-defined and is clearly $D$-bicolinear.
Diagram \eqref{diag.X} is now exactly diagram \eqref{eq.ass}. Diagram \eqref{diag.oX} follows from \eqref{eq.ass} tensored on the left with $\hT\ot_R-$ and on the right with $-\ot_S\hT$.

Now take any $x\ot_R\hx\ot_Sy\in T\ot_RC$ (summation implicit). The condition of diagram \eqref{diag.C}  comes out as
$$
x\,\ev(\hx\ot_Sy)=\hev(x\ot_R \hx)\,y\sut 1\ev(y\sut 2\ot_S y\sut 3).
$$
Similarly, for all $x\ot_R\hx\ot_Sy\in D\ot_RT$ (summation implicit), diagram \eqref{diag.D} commutes since
$$
\hev(x\ot_S\hx)\,y=\hev(x\sut 1\ot_R x\sut 2)x\sut 3\ev(\hx\ot_S y).
$$
Finally, diagram \eqref{diag.coass} commutes because of the bilinearity of $\ev$ and $\hev$ and by diagrams \eqref{diag.A} and \eqref{diag.B}.
\end{proof}

\subsubsection*{Reconstruction of the herd}
Let $(T,\gamma)$ be a tame $A$-$B$ herd. By Theorem~\ref{thm.cotor}, $\ol{T}=(C\ot_R\hT)\cap(\hT\ot_SD)$ is a companion of $T$ and $(T,\chi)$ is a $D$-$C$ coherd, where $C$ and $D$ are corings of Theorem~\ref{thm.entw}, and $\chi:T\ot_R\ol{T}\ot_ST\to T$ is given by \eqref{chi}. Furthermore, we know from the first part of this section that we can construct the $R$-bimodule $A'$ as the following coequaliser
\[
\xymatrix{
C\ot_R\oT\ot_ST \ar@<.5ex>[rrrr]^-{(\oT\ot_S\chi)\circ(\cov\ot_R\oT\ot_ST)} \ar@<-.5ex>[rrrr]_-{\varepsilon_C\ot_R\oT\ot_ST} &&&& \oT\ot_ST \ar[rr]^{\pi_{A}} && A' \ .
}
\]
Recall that $A'$ is in general a non-unital $R$-ring, but if $C$ is faithfully flat as a left or right $R$-module (i.e., if $\hT$ is faithfully flat as a left $R$- or right $S$-module), then $A'$ has a unit. 

Put $\omega=(\oT\ot_S\chi)\circ(\cov\ot_R\oT\ot_ST)-\varepsilon_C\ot_R\oT\ot_ST$. Then $A'=\oT\ot_ST/\im\omega$ consists of classes  that satisfy
\begin{equation}\label{eq:omega}
[\ev(\hx\ot_Sx)\hy\ot_Sy\ot_R\hz\ot_Sz]
=[\hx\ot_Sx\ot_R\hy\ot_Sy\ev(\hz\ot_Sz)],
\end{equation}
(summation implicit). This follows by the defining property of $D$.

Similarly, there is a(non-unital) $S$-ring $B'$ given by the coequaliser
\[
\xymatrix{
T\ot_R\oT\ot_SD \ar@<.5ex>[rrrr]^-{(\chi\ot_R\oT)\circ(T\ot_R\oT\ot_S\ocov)} \ar@<-.5ex>[rrrr]_-{T\ot_R\oT\ot_S\varepsilon_D} &&&& T\ot_R\oT \ar[rr]^{\pi_B} && B' \ .
}
\]

\begin{theorem}\label{reconstr}
Let $(T,\gamma)$ be a tame $A$-$B$ herd and let $A'$, $B'$ be rings constructed above.
\begin{enumerate}
\item There are ring morphisms $\nu_A:A'\to A$ and $\nu_B:B'\to B$.
\item If the map $h:\oT\to \hT$ of Theorem \ref{thm.barT} (3) is an isomorphism, then $\nu_A$ and $\nu_B$ are isomorphisms, in particular $A'$ and $B'$ are unital rings.
\item If $\hT$ is flat as left $R$- and right $S$-module and
$\oT=\hT\ot_SD=C\ot_R\hT$, then maps $\nu_A$ and $\nu_B$ are isomorphisms of rings.
\end{enumerate}
\end{theorem}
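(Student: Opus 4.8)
The plan is to construct the maps $\nu_A$ and $\nu_B$ first, then establish the isomorphism claims (2) and (3) by exhibiting explicit inverses built from the formal-dual data.

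\medskip

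\textbf{Step 1: Construction of $\nu_A$ and $\nu_B$.} Recall $A' = \oT\ot_S T/\im\omega$ and that $A$ is an $R$-ring via $\alpha$. I would define a map $\oT\ot_S T\to A$ on representatives by $\hx\ot_S x\ot_R \hy\ot_S y\mapsto \ev(\hy\ot_S y)\,\ev(\hx\ot_S x)$, using the inclusion $\oT\subset \hT\ot_S T\ot_R\hT$, and check that it respects the relation \eqref{eq:omega}: applying $\ev$ to both sides of \eqref{eq:omega} and using the $A$-bilinearity of $\ev$ shows both sides agree in $A$, so the map descends to $\nu_A:A'\to A$. Multiplicativity of $\nu_A$ follows from the definition of $\mu_{A'}$ via $\mu_X=\oT\ot_S\chi$ together with the explicit form \eqref{chi} of $\chi$ and the associativity axioms \eqref{diag.A}, \eqref{diag.B} of the formal dual. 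Unitality (when $A'$ has a unit, i.e.\ when $C$ is faithfully flat over $R$) follows from $\nu_A\circ\alpha'=\alpha$, which one reads off from $\pi_A\circ\cov=\alpha'\circ\eps_C$ and $\eps_C=\ev\mid_C$. The map $\nu_B$ is defined and analysed by symmetric arguments.

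\medskip

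\textbf{Step 2: Inverses in case (2).} Here I would use the hypothesis that $h=\ev\ot_A\hT\mid_{\oT}:\oT\to\hT$ is an isomorphism. The candidate inverse of $\nu_A$ would be $A\to A'$ sending $a\mapsto [\,h^{-1}(\hx)\ot_S x\,]$ for any presentation $a=\ev(\hx\ot_S x)$, or more robustly built from a dual basis: using surjectivity of $\hev$ (tameness) pick $e_i\in T$, $\he_i\in\hT$ with $\hev(\sum_i e_i\ot_R\he_i)=1_B$, and set $a\mapsto [\,h^{-1}(\he_i)\ot_S e_i a\,]$. I would then verify $\nu_A$ and this map are mutually inverse using the dual-basis relations of Lemma~\ref{lemma.formal.dual}(1) and the relation \eqref{eq:omega}, which lets one move the $\ev$-factor across the class. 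Well-definedness (independence of the chosen presentation) is exactly where \eqref{eq:omega} and the isomorphism $h$ must interact, so this is the delicate point. The argument for $\nu_B$ is symmetric, using surjectivity of $\ev$.

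\medskip

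\textbf{Step 3: Deducing (3) from (2).} The cleanest route is to show that the hypotheses of (3) force $h$ to be an isomorphism, and then invoke (2). If $\oT=\hT\ot_S D=C\ot_R\hT$, then on one hand $h=\hT\ot_B\hev\mid_{\oT}$ and on the other hand the flatness assumptions make the relevant evaluation/coevaluation maps from Proposition~\ref{prop.dual} and Lemma~\ref{lemma.formal.dual} into isomorphisms; combining these identifications shows $h$ agrees with an isomorphism. Concretely, I expect $\oT=C\ot_R\hT$ together with $C$ flat to identify $h$ with the counit-type map $\ev\ot_A\hT$ whose invertibility comes from $\hT\ot_S T\cong A\ot_R C$ (Proposition~\ref{prop.iso}) restricted appropriately.

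\medskip

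The main obstacle I anticipate is the well-definedness and injectivity of the inverse in Step 2: the class structure in $A'=\oT\ot_S T/\im\omega$ is governed by the single relation \eqref{eq:omega}, and showing that the dual-basis formula for the inverse is independent of the chosen dual basis and genuinely inverts $\nu_A$ requires carefully threading \eqref{eq:omega} through the formal-dual axioms \eqref{diag.A}--\eqref{diag.B}. The multiplicativity check in Step 1 is mechanical but lengthy; the real content is confirming that the quotient by $\im\omega$ collapses $\oT\ot_S T$ onto exactly $A$ under the hypotheses, which is where tameness (faithful flatness and surjectivity of $\ev,\hev$) does the essential work.
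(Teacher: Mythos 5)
Your Steps 1 and 2 follow the paper's route in outline, but Step 3 rests on a false premise: the hypotheses of part (3) do \emph{not} force $h=\ev\ot_A\hT\mid_{\oT}:\oT\to\hT$ to be an isomorphism, so part (3) cannot be deduced from part (2). Indeed, when $\oT=C\ot_R\hT$ the map $h$ is just $\eps_C\ot_R\hT$ (since $\ev\mid_C=\eps_C$ takes values in $R$), which is invertible only when $C$ is essentially trivial. The paper's motivating example for (3) is precisely a case where $h$ fails to be injective: for a Galois co-object $C$ over a commutative Hopf algebra one has $\oT\cong C\ot C$ while $\hT\cong C$ as $k$-modules, so $h$ is a map $C\ot C\to C$. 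Parts (2) and (3) are two genuinely different sufficient conditions (compare Remark~\ref{rem.reconstr}: (2) covers the pre-torsor situation $T=A=B$, (3) covers Galois co-objects). The paper proves (3) directly: under $\oT=\hT\ot_SD$ the coequaliser defining $A'$ is computed on $\hT\ot_SD\ot_ST$, and the inverse is exhibited as $\nu'_A(a)=[\hx_a\ot_S\gamma(x_a)]$ for any presentation $a=\ev(\hx_a\ot_Sx_a)$, with independence of the presentation checked against \eqref{eq:omega} by passing through an element $\hx_1\ot_Sx_1$ satisfying $\ev(\hx_1\ot_Sx_1)=1_A$. An argument of this kind is what is missing from your Step 3.

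Two smaller points. In Step 1 your formula $\hx\ot_Sx\ot_R\hy\ot_Sy\mapsto\ev(\hy\ot_Sy)\ev(\hx\ot_Sx)$ has the factors in the wrong order: applied to the two sides of \eqref{eq:omega} it yields $\ev(\hz\ot_Sz)\ev(\hx\ot_Sx)\ev(\hy\ot_Sy)$ and $\ev(\hy\ot_Sy)\ev(\hz\ot_Sz)\ev(\hx\ot_Sx)$ respectively, which differ for noncommutative $A$, so the map does not descend to $A'$ (and would at best be anti-multiplicative); the map that works is $\ev(\hx\ot_Sx)\ev(\hy\ot_Sy)$, as in the paper. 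In Step 2 your first candidate inverse, $a\mapsto[h^{-1}(\hx)\ot_Sx]$ for a presentation $a=\ev(\hx\ot_Sx)$, is exactly the paper's map (the paper factors it through the intermediate coequaliser $Q=\hT\ot_ST/\im\varpi$ and inverts each factor separately); but the ``more robust'' dual-basis variant $a\mapsto\sum_i[h^{-1}(\he_i)\ot_Se_ia]$ is not an inverse, since $\nu_A$ sends it to $\bigl(\sum_i\ev(\he_i\ot_Se_i)\bigr)a$ and $\sum_i\ev(\he_i\ot_Se_i)$ is the trace of the identity of the projective module $T$, not $1_A$ in general (it equals $n\cdot 1_A$ when $T$ is free of rank $n$). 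The well-definedness issue you rightly flag for the correct candidate is settled in the paper by comparing two presentations of $a$ via relation \eqref{eq:Q}.
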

\begin{proof}
We only prove the satements for  rings $A$ and $A'$. The statements for $B$ and $B'$ are verified by  symmetric arguments.

(1) Consider the map $\ol{\ev}:\oT\ot_ST\to A$, given by
$$\ol{\ev}(\hx\ot_Sx\ot_R\hy\ot_Sy)=\ev(\hx\ot_Sx)\ev(\hy\ot_Sy),$$
then obviously, $\ol{\ev} \circ\omega = 0$. Hence by the universal property of coequalisers, there is a map $\nu_A:A'\to A$. Using the properties of the evaluation maps, it is easily checked that $\nu_A$ is a ring morphism.

(2) Consider the following diagram with coequalisers as rows:
\[
\xymatrix{
C\ot_R\oT\ot_ST \ar[d]^{Q\ot_Rh\ot_ST} \ar@<.5ex>[rrrr]^-{(\oT\ot_S\chi)\circ(\cov\ot_R\oT\ot_ST)} \ar@<-.5ex>[rrrr]_-{\varepsilon_C\ot_R\oT\ot_ST} &&&& \oT\ot_ST \ar[d]^{h\ot_ST} \ar[rr]^{\pi_{A}} && A' \ar[d]^{\nu_Q} \\
C\ot_R\hT\ot_ST \ar@<.5ex>[rrrr]^-{(\hT\ot_S\mu_{T,A})\circ (C\ot_R\ev)} \ar@<-.5ex>[rrrr]_-{\varepsilon_C\ot_R\hT\ot_ST} &&&& \hT\ot_ST \ar[rr]^{\pi_{Q}} && Q \ .
}
\]
Here $\mu_{T,A}:T\ot_RA\to T$ denotes the action of $A$ on $T$.
One can check that the diagram is commutative, hence the map $\nu_Q$ exists by the universal property of  coequalisers. Since $h$ is an isomorphism, $\nu_Q$ is an isomorphism as well. We claim that $A\cong Q$. Put $\varpi=(\hT\ot_S\mu_{T,A})\circ (C\ot_R\ev)-\varepsilon_C\ot_R\hT\ot_ST$. Then $Q= \hT\ot_ST/\im\varpi$ consists of classes of elements that satisfy
\begin{equation}\label{eq:Q}
[\ev(\hx\ot_S x)\hy\ot_S y]=[\hx\ot_Sx\ev(\hy\ot_Sy)].
\end{equation}
Obviously $\ev:\hT\ot_ST\to A$ satisfies $\ev \circ \varpi=0$, hence the universal property of coequalisers yields  a map $\nu:Q\to A$. Conversely, define a map $A\to Q$ as follows. By assumption, the map $\ev$ is surjective, therefore, for all $a\in A$, there exists a (not necessarily unique) element $\hx_a\ot_S x_a\in \hT\ot_S T$ (summation implicit) such that $a=\ev(\hx_a\ot_S x_a)$. Define 
$\nu':A\to Q,\ \nu'(a)=[\hx_a\ot_S x_a]$. 
Take another element $\hy_a\ot_S y_a\in\hT\ot_S T$ such that $\ev(\hy_a\ot_S y_a)=a$, and use the defining property of $Q$ \eqref{eq:Q} to compute
\begin{eqnarray*}
[\hx_a\ot_S x_a]&=&[\hx_a\ot_S x_a\cdot 1]=[\hx_a\ot_S x_a\ev(\hx_1\ot_S x_1)]\\
&=&[\ev(\hx_a\ot_S x_a)\hx_1\ot_S x_1]=[\ev(\hy_a\ot_S y_a)\hx_1\ot_S x_1]
=[\hy_a\ot_S y_a].
\end{eqnarray*}
Thus the map $\nu'$ is well-defined. 
Obviously  $\nu\circ \nu'= A$, and a similar computation to the one above shows that $\nu'\circ \nu=Q$. Hence $A\cong Q\cong A'$.

(3) Under these conditions, the defining coequaliser diagram for $A'$ reduces to
\[
\xymatrix{
C\ot_R\hT\ot_SD\ot_ST \ar@<.5ex>[rrrr]^-{(\oT\ot_S\chi)\circ(\cov\ot_R\hT\ot_SD\ot_ST)} \ar@<-.5ex>[rrrr]_-{\varepsilon_C\ot_R\hT\ot_SD\ot_ST} &&&& \hT\ot_SD\ot_ST \ar[rr]^-{\pi_{A}} && A' \ .
}
\]
Since $\gamma(T)\subset D\ot_ST$,  the flatness of $\hT$ as as right $S$-module implies that $\hT\ot_S\gamma(\hT\ot_ST)\subset \hT\ot_SD\ot_ST$.
Using a similar notation as in the proof of part (2), define a map $\nu'_A:A\to A'$ by
$\nu'_A(a)=[\hx_a\ot_S\gamma(x_a)]$. This map is well-defined since, 
\begin{eqnarray*}
[\hx_a\ot_Sx_a\sut 1\ot_Rx_a\sut 2\ot_Sx_a\sut 3]&=&[\hx_a\ot_Sx_a\sut 1\ot_Rx_a\sut 2\ot_Sx_a\sut 3\ev(\hx_1\ot_S x_1)]\\
&&\hspace{-3cm}=[\ev(\hx_a\ot_Sx_a\sut 1)x_a\sut 2\ot_Sx_a\sut 3\ot_R\hx_1\ot_S x_1\sut 1\ev(x_1\sut 2\ot_Sx_1\sut 3)]\\
&&\hspace{-3cm}=[\ev(\hx_a\ot_Sx_a\sut 1)\ev(x_a\sut 2\ot_Sx_a\sut 3)\hx_1\ot_S x_1\sut 1\ot_Rx_1\sut 2\ot_Sx_1\sut 3]\\
&& \hspace{-3cm}=[\ev(\hy_a\ot_Sy_a
)\hx_1\ot_S x_1\sut 1\ot_Rx_1\sut 2\ot_Sx_1\sut 3]=[\hy_a\ot_S\gamma(y_a)] ,
\end{eqnarray*}
where $\hy_a\ot_Sy_a\in \hT\ot_ST$ is any element such that $\ev(\hy_a\ot_Sy_a)=\ev(\hx_a\ot_Sx_a)=a$. It is easily checked that $\nu'_A$ is the  inverse of $\nu_A$.
\end{proof}

\begin{remark}\label{rem.reconstr}
Theorem~\ref{reconstr} shows that there are (at least) two situations in which the original base rings $A$ and $B$ of the tame bimodule herd $T$ can be reconstructed from the associated coherd. Both cases have  non-empty sets of examples. 

The situation of Theorem \ref{reconstr} (2) occurs when $T=A=B$ is a ring and the associated entwining maps $\psi$ and $\phi$ of Theorem \ref{thm.entw} are bijective. This is the case described in \cite[Theorem 4.9]{BohBrz:pre}.

An explicit example of the situation of Theorem~\ref{reconstr}~(3) will be discussed in Section~\ref{se:co-object}, where we consider Galois co-objects. As in this situation $R=S=k$ is a commutative ring, and $T\cong \hT$ as $k$-module, the flatness conditions on $\hT$ are already contained in the flatness of $T$.
\end{remark}

\section{Composition of Herds}\label{se:composition}
\setcounter{equation}0
The aim of this section is to describe a way in which two bimodule herds can be composed by means of the tensor product. 

Consider in addition to the ring morphisms $\alpha:R\to A$ and $\beta:S\to B$ a third ring morphism $\kappa:Z\to K$. 

\begin{lemma}\label{lemma.composition.dual}
Let $T$ be a $B$-$A$ bimodule with a formal dual $\hT$, and let $P$ be an $A$-$K$ bimodule with a formal dual $\hP$. Then $V=T\ot_AP$ is a $B$-$K$ bimodule with formal dual $\hV=\hP\ot_A\hT$.
\end{lemma}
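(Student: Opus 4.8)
The plan is to build the two structure maps required by Definition~\ref{def.dual} for the pair $(V,\hV)$ by nesting the data of the two given formal duals. I write $\ev,\hev$ for the (co)evaluations of $(T,\hT)$, and let $\ev_P:\hP\ot_R P\to K$ and $\hev_P:P\ot_Z\hP\to A$ be those of $(P,\hP)$; these are the instances of \eqref{diag.A}--\eqref{diag.B} obtained by replacing the quadruple $(B,A,S,R)$ with $(A,K,R,Z)$. I would then set
$$
\ev_V(\hp\ot_A\hx\ot_S y\ot_A p)=\ev_P(\hp\ot_R\ev(\hx\ot_S y)\,p),
\qquad
\hev_V(x\ot_A p\ot_Z\hp\ot_A\hx)=\hev(x\,\hev_P(p\ot_Z\hp)\ot_R\hx),
$$
that is: contract the inner pair first ($\hT\ot_S T$ by $\ev$, respectively $P\ot_Z\hP$ by $\hev_P$), absorb the resulting scalar of $A$ into $P$ (respectively into $T$), and contract the outer pair by $\ev_P$ (respectively $\hev$).

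Granting these maps, the verification splits into three parts. First, $V=T\ot_A P$ is a $B$-$K$ bimodule and $\hV=\hP\ot_A\hT$ a $K$-$B$ bimodule, and $\ev_V$ (resp.\ $\hev_V$) is a $K$-bimodule (resp.\ $B$-bimodule) map; this is immediate from the corresponding linearity of $\ev,\ev_P$ (resp.\ $\hev,\hev_P$). Second, I would check diagram \eqref{diag.A} for $(V,\hV)$, i.e.\ $\hev_V(v\ot_Z\hat v)\,v'=v\,\ev_V(\hat v\ot_S v')$. Writing $v=x\ot_A p$, $\hat v=\hp\ot_A\hx$, $v'=x'\ot_A p'$ and setting $a:=\ev(\hx\ot_S x')$, $q:=\hev_P(p\ot_Z\hp)$, one application of \eqref{diag.A} for $T$ turns the left side into $x\,q\,a\ot_A p'$, while one application of the $P$-analogue of \eqref{diag.A} turns the right side into $x\ot_A q\,a\,p'$; these coincide by the $A$-balancing of $T\ot_A P$. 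Third, diagram \eqref{diag.B} for $(V,\hV)$ is verified by the symmetric computation, using \eqref{diag.B} for $T$ and its $P$-analogue. Both chases are short and purely formal once the maps are in place.

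The main obstacle I anticipate is the well-definedness of $\ev_V$ and $\hev_V$ over the intermediate $A$-tensor products. Using the regrouping $\hV\ot_S V\cong\hP\ot_A(\hT\ot_S T)\ot_A P$, the assignment $\hp\ot_A\hx\ot_S y\ot_A p\mapsto\hp\ot_A\ev(\hx\ot_S y)\,p$ is unproblematic precisely because $\ev$ is an $A$-bimodule map, so it descends to a map $\hV\ot_S V\to\hP\ot_A P$. The delicate step is the final contraction: $\ev_P$ is given on $\hP\ot_R P$, so to produce $\ev_V$ it must descend along the canonical surjection $\hP\ot_R P\to\hP\ot_A P$, i.e.\ satisfy $\ev_P(\hp\,a\ot_R p)=\ev_P(\hp\ot_R a\,p)$ for all $a\in A$. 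This is exactly where the formal-dual relations must be exploited: combining \eqref{diag.A} and \eqref{diag.B} for $P$ gives this identity for every $a$ in the image of $\hev_P$, so the descent succeeds once that image exhausts $A$ (which is automatic in the tame, or more generally Morita-context, situations in which the lemma is applied). The symmetric requirement for $\hev_V$ is the analogous compatibility of $\hev$, governed by the image of $\ev$. I expect that reconciling these descent conditions with the stated hypotheses — rather than the diagram chases — is where the substance of the proof lies.
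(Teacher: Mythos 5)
Your construction coincides with the paper's: its proof consists precisely of the two composites $\ev_V=\ev_P\circ(\hP\ot_A\ev_T\ot_A P)$ and $\hev_V=\hev_T\circ(T\ot_A\hev_P\ot_A\hT)$, followed by the remark that diagrams \eqref{diag.A} and \eqref{diag.B} for $V$ follow from those for $T$ and $P$ by an ``easy computation''; your two chases are exactly that computation and are correct. The obstacle you single out is genuine and is \emph{not} addressed in the paper's proof: since $\hV\ot_S V=\hP\ot_A(\hT\ot_S T)\ot_A P$, contracting the inner pair with the $A$-bimodule map $\ev_T$ lands in $\hP\ot_A P$, whereas $\ev_P$ is only given on $\hP\ot_R P$, so one must know that $\ev_P$ is $A$-balanced (and, symmetrically, that $\hev_T$ descends to $T\ot_A\hT$). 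Your verification that balancedness holds for all $a$ in the subring generated by $\alpha(R)$ and the image of $\hev_P$ (via \eqref{diag.A} and \eqref{diag.B} for $P$) is correct, as is the dual statement for $\hev_T$ governed by the image of $\ev_T$. Note that Lemma~\ref{lemma.formal.dual} supplies an alternative sufficient condition from within the paper: part (1)(b) applied to $P$ shows that surjectivity of $\ev_P$ alone forces $\ev_P$ to factor through $\hP\ot_A P$, and part (1)(a) shows that surjectivity of $\hev_T$ forces $\hev_T$ to factor through $T\ot_A\hT$. Either way, for arbitrary formal duals with no surjectivity hypothesis the lemma is not fully justified by the paper's argument, and your proposal correctly locates the missing step; in the situations where the lemma is actually invoked (tame herds, and the Galois co-objects of Section~\ref{se:co-object}, where the evaluation maps are surjective) the descent is available and both proofs go through.
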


\begin{proof}
We define $\ev_V$ out of $\ev_P$ and $\ev_T$ as follows
\begin{eqnarray*}
\ev_V=\ev_P\circ(\hP\ot_A\ev_T\ot_AT):\hP\ot_A\hT\ot_ST\ot_AP\to K.
\end{eqnarray*}
Similarly, we define $\hev_V$ out of $\hev_P$ and $\hev_T$ by
\begin{eqnarray*}
\hev_V=\hev_T\circ(T\ot_A\hev_P\ot_A\hT):T\ot_AP\ot_Z\hP\ot_A\hT\to B.
\end{eqnarray*}
An easy computation shows that the commutativity of the diagrams \eqref{diag.A} and \eqref{diag.B} applied on $P$ and $T$ forces the same diagrams for $V$ to be comutative.
\end{proof}

Let $\Ee$ and $\Cc$ be two $A$-corings and consider an $A$-bimodule map
$$\sigma:\Cc\ot_A\Ee\to \Ee\ot_A\Cc,$$
which renders commutative the following diagrams,
\begin{equation}\label{eq.sigmaE}
\hspace{-1.2cm}
\xymatrix{
& \Cc\!\ot_A\!\Cc\!\ot_A\!\Ee \ar[dl]_-{\Cc\ot_A\sigma} && \Cc\!\ot_A\!\Ee \ar[dd]_-{\sigma} \ar[rr]^-{\Cc\ot_A\Delta_\Ee} \ar[ll]_-{\Delta_\Cc\ot_A\Ee} && \Cc\!\ot_A\!\Ee\!\ot_A\!\Ee \ar[dr]^-{\sigma\ot_A\Ee}\\
\hspace{1cm}\Cc\!\ot_A\!\Ee\!\ot_A\!\Cc \ar[dr]_-{\sigma\ot_A\Cc} &&&&&& \hspace{-1cm}\Ee\!\ot_A\!\Cc\!\ot_A\!\Ee \ar[dl]^-{\Ee\ot_A\sigma}\\
&\Ee\!\ot_A\!\Cc\!\ot_A\!\Cc \ar[rr]_-{\Ee\ot_A\Delta_\Cc}&& \Ee\!\ot_A\!\Cc\ar[rr]_-{\Delta_\Ee\ot_A\Cc} && \Ee\!\ot_A\!\Ee\!\ot_A\!\Cc 
}
\end{equation}
\begin{equation}\label{eq.varepsilonE}
\xymatrix{
\Cc\ot_A\Ee \ar[d]_-{\sigma} \ar[rr]^-{\Cc\ot_A\varepsilon_\Ee} && \Cc\ot_AA \ar[d]^-{\cong}\\
\Ee\ot_A\Cc\ar[rr]_-{\varepsilon_\Ee\ot_A\Cc} && A\ot_A\Cc ,
}\qquad
\xymatrix{
\Cc\ot_A\Ee \ar[d]_-{\sigma} \ar[rr]^-{\varepsilon_\Cc\ot_A\Ee} && A\ot_A\Ee \ar[d]^-{\cong}\\
\Ee\ot_A\Cc\ar[rr]_-{\Ee\ot_A\varepsilon_\Cc} && \Ee\ot_AA .
}
\end{equation}
These conditions hold if and only if the $A$-bimodule $\Ee\ot_A\Cc$ is $A$-coring with coproduct $(\Ee\ot_A\sigma\ot_A\Cc)\circ(\Delta_\Ee\ot_A\Delta_\Cc)$ and counit $\varepsilon_\Ee\ot_A\varepsilon_\Cc$ (see \cite{CMZ} for the case of a commutative base). In this case, the $A$-coring structure on $\Ee\ot_A\Cc$ is called the {\em smash coproduct} of $\Ee$ and $\Cc$ and denoted by $\Ee\ot_\sigma\Cc$.

\begin{theorem}\label{th:composition}
Let $(T,\gamma_T)$ be a $B$-$A$ herd  and $(P,\gamma_P)$ an $A$-$K$ herd. Denote by $\Cc=\hT\ot_ST$ the $A$-coring associated to $T$ as in Corollary~\ref{cor.coring} and $\Ee=P\ot_Z\hP$ the $A$-coring associated to $P$. 
Then $V=T\ot_AP$ is a $B$-$K$ herd with shepherd $\gamma_V$ satisfying 
\begin{eqnarray}\label{eq.torsorV}
\gamma_{T,A}\ot_AP=(T\ot_A\hev_P\ot_A\hT\ot_ST\ot_AP)\circ\gamma_V\, ,\\
T\ot_A\gamma_{P,A}=(T\ot_AP\ot_Z\hP\ot_A\ev_T\ot_AP)\circ\gamma_V\, ,\nonumber
\end{eqnarray}
if and only if there exists a map
$\sigma:\Cc\ot_A \Ee\to \Ee\ot_A \Cc$, which defines a smash coproduct $\Ee\ot_\sigma \Cc$. (Hence $\Cc\ot_A \Ee$ is an $A$-coring).
\end{theorem}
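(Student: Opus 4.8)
The plan is to set up an explicit, mutually inverse correspondence between shepherds $\gamma_V$ on $V=T\ot_AP$ satisfying \eqref{eq.torsorV} and twist maps $\sigma:\Cc\ot_A\Ee\to\Ee\ot_A\Cc$, and then to match the herd axioms for $\gamma_V$ with the smash-coproduct conditions \eqref{eq.sigmaE}, \eqref{eq.varepsilonE} for $\sigma$. The starting point is the identification of $S$-$Z$ bimodules
$$
V\ot_Z\hV\ot_SV \;=\; T\ot_AP\ot_Z\hP\ot_A\hT\ot_ST\ot_AP \;\cong\; T\ot_A(\Ee\ot_A\Cc)\ot_AP,
$$
coming from $\hV=\hP\ot_A\hT$ (Lemma~\ref{lemma.composition.dual}) and the descriptions $\Cc=\hT\ot_ST$, $\Ee=P\ot_Z\hP$ of Corollary~\ref{cor.coring}, where $\eC=\ev_T$ and $\eps_\Ee=\hev_P$. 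Under this identification a shepherd is the same datum as a map $V\to T\ot_A(\Ee\ot_A\Cc)\ot_AP$, and the two conditions in \eqref{eq.torsorV} say exactly that collapsing the $\Ee$-slot by $\hev_P=\eps_\Ee$ returns the right $\Cc$-coaction $\gamma_{T,A}$ of $T$, while collapsing the $\Cc$-slot by $\ev_T=\eC$ returns the left $\Ee$-coaction $\gamma_{P,A}$ of $P$.

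For the direction $(\Leftarrow)$, given $\sigma$ I would set
$$
\gamma_V \;:=\; (T\ot_A\sigma\ot_AP)\circ(\gamma_{T,A}\ot_A\gamma_{P,A}):\;T\ot_AP\longrightarrow T\ot_A\Cc\ot_A\Ee\ot_AP\longrightarrow T\ot_A\Ee\ot_A\Cc\ot_AP,
$$
which is an $S$-$Z$ bimodule map, since $\gamma_{T,A}$ is left $S$- and right $A$-linear, $\gamma_{P,A}$ is left $A$- and right $Z$-linear (Lemma~\ref{lemma.lin}), and $\sigma$ is an $A$-bimodule map. The compatibility \eqref{eq.torsorV} then follows at once from the two counit conditions \eqref{eq.varepsilonE} for $\sigma$ together with the counit properties of $\gamma_{T,A}$ and $\gamma_{P,A}$ (i.e.\ \eqref{eq.coev}, \eqref{eq.ev} applied to $T$ and to $P$); the same ingredients deliver the counit axioms \eqref{eq.coev}, \eqref{eq.ev} for $\gamma_V$ itself.

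For $(\Rightarrow)$, given a shepherd $\gamma_V$ satisfying \eqref{eq.torsorV}, I would recover $\sigma$ by sandwiching $\gamma_V$ with evaluations. Writing $\Cc\ot_A\Ee=\hT\ot_SV\ot_Z\hP$, apply $\gamma_V$ to the inner copy of $V$ and contract the outer $\hT\ot_ST$ and $P\ot_Z\hP$ with $\ev_T$ and $\hev_P$:
$$
\sigma \;=\; \big(\Ee\ot_A\Cc\ot_A\hev_P\big)\circ\big(\ev_T\ot_A(\cdots)\big)\circ\big(\hT\ot_S\gamma_V\ot_Z\hP\big):\;\hT\ot_SV\ot_Z\hP\longrightarrow P\ot_Z\hP\ot_A\hT\ot_ST.
$$
Using \eqref{eq.torsorV} and the formal-dual diagrams \eqref{diag.A}, \eqref{diag.B} one checks that $\sigma$ is an $A$-bimodule map inverse to the construction of the previous paragraph, so the two assignments are mutually inverse bijections; in particular no tameness of $T$ or $P$ is required.

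It then remains to match the remaining axioms, and this is where the real work lies. The counit conditions \eqref{eq.varepsilonE} for $\sigma$ correspond, as above, to \eqref{eq.torsorV} together with counitality of $\gamma_V$, while the two hexagons \eqref{eq.sigmaE} correspond precisely to the coassociativity \eqref{eq.ass} of $\gamma_V$. Concretely, substituting $\gamma_V=(T\ot_A\sigma\ot_AP)\circ(\gamma_{T,A}\ot_A\gamma_{P,A})$ into both composites of \eqref{eq.ass} and invoking coassociativity of $\DC$ and $\Delta_\Ee$ (Corollary~\ref{cor.coring}), the equality of the two sides reduces to the requirement that $\sigma$ be transported through $\DC$ and $\Delta_\Ee$ exactly as in \eqref{eq.sigmaE}; conversely each hexagon is extracted from \eqref{eq.ass} by contracting the outer factors with $\ev_T$ and $\hev_P$ as in the formula for $\sigma$. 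The main obstacle is this distributive-law computation: the difficulty is purely in the bookkeeping of the iterated indices $\gamma(x)=x\sut1\ot_Rx\sut2\ot_Sx\sut3$ of the two coactions and in tracking how $\sigma$ commutes past them, rather than in any conceptual point. Once this is done, $\gamma_V$ is a shepherd satisfying \eqref{eq.torsorV} if and only if $\sigma$ satisfies \eqref{eq.sigmaE} and \eqref{eq.varepsilonE}, i.e.\ defines the smash coproduct $\Ee\ot_\sigma\Cc$, which is the assertion.
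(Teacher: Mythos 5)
Your proposal is correct and follows essentially the same route as the paper: the same formula $\gamma_V=(T\ot_A\sigma\ot_AP)\circ(\gamma_{T,A}\ot_A\gamma_{P,A})$ for the forward construction, the same recovery of $\sigma$ by sandwiching $\hT\ot_S\gamma_V\ot_Z\hP$ between $\ev_T$ and $\hev_P$, and the same matching of the counit conditions \eqref{eq.varepsilonE} with \eqref{eq.coev}--\eqref{eq.ev} and of the pentagons \eqref{eq.sigmaE} with the coassociativity \eqref{eq.ass}. The only (harmless) addition is your explicit claim that the two assignments are mutually inverse, which the paper leaves implicit.
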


\begin{proof}
By Lemma~\ref{lemma.composition.dual}, $\hV=\hP\ot_A\hT$ is a formal dual of $T\ot_AP$. 
Suppose first that $\Cc\ot_\sigma\Ee$ is a smash coproduct. Denote by $\gamma_{T,A}:T\to T\ot_A\hT\ot_ST$ and $\gamma_{P,A}:P\ot_Z\hP\ot_AP$ the projections of $\gamma_T$ and $\gamma_P$ respectively, constructed as in Notation \ref{not.gamma}. Define $\gamma_V:V\to V\ot_K\hV\ot_SV$ as the following composition: 
\[
\xymatrix{
V=T\ot_AP \ar[d]_-{\gamma_{T,A}\ot_A\gamma_{P,A}} \ar[rr]^-{\gamma_V} && V\ot_K\hV\ot_SV \ar@{=}[d] \\
T\ot_A\hT\ot_ST\ot_A P\ot_Z\hP\ot_AP \ar[rr]^-{T\ot_A\sigma\ot_AP}  
&& T\ot_AP\ot_Z\hP\ot_A\hT\ot_ST\ot_A P 
}
\]
We need to check that $\gamma_V$ satisfies diagrams \eqref{eq.coev}--\eqref{eq.ass}. First note that $\hev_P=\varepsilon_\Ee$. Diagram \eqref{eq.coev} for $V$ then comes out as (unadorned tensor product is over $A$) 

{\small
\[
\xymatrix{
T\!\otA \! P\ar[dd]^{\cong} \ar[rr]^-{\gamma_{T,A}\otA \gamma_{P,A}}  
&& T\! \otA \! \hT\! \ot_S\! T\! \otA \! P\! \ot_Z\! \hP\! \otA \!  P \ar[rr]^-{T \otA \sigma\otA P } \ar[d]_-{T\otA \hT\ot_ST\otA \hev_P\otA  P}
&& T\! \otA \! P\! \ot_Z\! \hP\! \otA \! \hT\! \ot_S\! T\! \otA \!  P \ar[d]_{T\otA \hev_P\otA \hT\ot_ST\otA P}\\
&& T\! \otA \! \hT\! \ot_S\! T\! \otA \! A\! \otA \!  P \ar[rr]^-\cong&& T\! \otA \! \hT\! \ot_S\! T\! \otA \!  P \ar[d]_{\hev_T\ot_ST\otA P}\\
R\! \ot_R\! T\! \otA \! P \ar[rrrr]_-{\beta\ot_ST\otA P}&& && B\! \ot_S\! T\! \otA \!  P \ .
}
\]
}

\noindent
The small square in this diagram commutes because of the left diagram in \eqref{eq.varepsilonE}, the other part of the diagram commutes by diagram \eqref{eq.coev} applied to $T$ and $P$. In the same way, one proves that $V$ satisfies the condition of diagram \eqref{eq.ev}. Diagram \eqref{eq.ass} for $V$ looks as follows (unadorned tensor product is over $A$)

{\small
\[
\xymatrix{
T\! \otA\! P \ar[rrr]^-{\gamma_{T,A}\otA\gamma_{P,A}} \ar[d]^-{\gamma_{T,A}\otA\gamma_{P,A}} &&& T\! \otA\! \Cc\! \otA\!  \Ee\! \otA\! P \ar[d]_{\gamma_{T,A}\otA\Cc\otA\gamma_{P,A}\ot_Z\hP\otA P}
\ar[rr]^-{T\otA\sigma\otA P} && { T\! \otA\! \Ee\! \otA\!  \Cc\! \otA \! P} \ar[d]_-{\gamma_{T,A}\otA \gamma_{P,A}\ot_Z \hP\otA \Cc\otA P} \\
T\! \otA\!  \Cc\! \otA \!  \Ee\! \otA\!  P \ar[d]^-{T\otA \sigma\otA P} \ar[rrr]^-{T\otA \hT\ot_S\gamma_{T,A}\otA \Ee \otA \gamma_{P,A}}
&&& T\! \otA\!  \Cc\! \otA\!  \Cc\! \otA\!  \Ee\! \otA\!  \Ee\! \otA\!  P \ar[d]_{T\otA \sigma^2\otA \Ee\otA P} \ar[rr]^-{T\otA \Cc\otA \sigma_2\otA P}
&& T\! \otA\!  \Cc\! \otA\!   \Ee\! \otA\!  \Ee \otA\!  \Cc\! \otA\!  P \ar[d]_{T\otA \sigma\otA \Ee\otA \Cc\otA P}\\
T\! \otA \! \Ee\! \otA\!  \Cc\! \otA\!  P \ar[rrr]_-{T\otA \Ee\otA \hT\ot_S\gamma_{T,A}\otA \gamma_{P,A}}
&&& T\! \ot_R\! \Ee\! \otA\!  \Cc\! \otA\!  \Cc\! \otA\!  \Ee\! \otA\!  P \ar[rr]_-{T\otA \Ee\otA \Cc\otA \sigma\otA P}
&& T\! \otA\!  \Ee\! \otA\!  \Cc\! \otA\!  \Ee\! \otA\!  \Cc\! \otA\!  P\ ,
}
\]
}

\noindent where  $\sigma^2=(\sigma\ot_A\Cc)\circ(\Cc\ot_A\sigma)$ and $\sigma_2=(\Ee\ot_A\sigma)\circ(\sigma\ot_A\Ee)$. The upper left square in this diagram commutes by \eqref{eq.ass} for $T$ and $P$, the upper right diagram commutes by the right pentagon in diagram \eqref{eq.sigmaE}, the lower left square commutes by the left  pentagon in \eqref{eq.sigmaE}, and the lower right square commutes trivially.
Finally, the equations \eqref{eq.torsorV} can be easily verified.

Conversely, suppose that $V=T\ot_AP$ is a herd with the shepherd $\gamma_V$. Then we define 
$\sigma:\Cc\ot_A\Ee\to \Ee\ot_A\Cc$ as follows,
\[
\xymatrix{
\Cc\!\ot_A\!\Ee \ar@{=}[d] \ar[rr]^-{\sigma} && \Ee\!\ot_A\!\Cc \ar@{=}[d] \\
\hT\!\ot_S\! T\!\ot_A\! P\! \ot_Z\! \hP \ar[dr]^{\hT\ot_S\gamma_V\ot_Z\hP} &&P\! \ot_Z\! \hP\! \ot_A\! \hT\! \ot_S\! T\\
&\hT\! \ot_S\! T\! \ot_A\! P\! \ot_Z\! \hP\! \ot_A\! \hT\! \ot_S\! T\! \ot_A\! P\! \ot_Z\! \hP \ar[ur]_{~\ev_T\ot_A\Ee\ot_A\Cc\ot_A\hev_P}
}
\]
By similar diagram chasing arguments, one proves that $\sigma$ is indeed defining a smash coproduct on $\Ee\ot_A\Cc$, provided that the equations \eqref{eq.torsorV} are satisfied.
\end{proof}

\section{Galois co-objects}\label{se:co-object}
\setcounter{equation}0
The aim of this section is to show how Galois co-objects for a commutative Hopf algebra and their composition can be interpreted in terms of bimodule herds. In this section we fix a commutative ground ring $k$, and do not deal with $k$-rings and $k$-corings, but with $k$-algebras and $k$-coalgebras. Troughout this section $H$ is a Hopf algebra (with coproduct $\Delta_H$, counit $\eps_H$ and the unit map $\eta_H: k\to H$, $x\mapsto x1_H$)  that is faithfully flat over its commutative base ring $k$, with a bijective antipode. The symbol $S$ denotes the antipode of a Hopf algebra $H$. The unadorned tensor product is over $k$. We use the Sweedler notation for coproduct, i.e.\ $\Delta_H(h) = h\sw 1\ot h\sw 2$, $\Delta^2_H(h) = h\sw 1\ot h\sw 2\ot h\sw 3$, etc.

\subsubsection*{Galois co-objects as Galois comodules}

Let $C$ be a right $H$-module coalgebra, that is, $C$ is a $k$-coalgebra, with coproduct $\Delta_C$ and counit $\eps_C$, and a right $H$-module such that, for all $c\in C$ and $h\in H$,
$$
\Delta_C(ch)=c\sw 1 h\sw 1\ot c\sw 2 h\sw 2, \qquad \eps_C(ch) = \eps_C(c)\eps_H(h).
$$
A right $(H,C)$-Hopf module $M$ is a right $k$-module that has a right $H$-module structure and a right $C$-comodule structure $\varrho^M: M\to M\otimes C$ with the following compatibility condition
$$
\varrho^M(mh)=m\sco 0 h\sw 1\ot m\sco 1 h\sw 2,
$$
for all $m\in M$ and $h\in H$, where  $\varrho^M(m) = m\sco 0\ot m\sco 1$ is the Sweedler notation for a coaction. 
The category of all $(H,C)$-Hopf modules with $H$-linear $C$-colinear maps between them is denoted by $\M^C_H(H)$. It is known that out of these data one can construct an $H$-coring $\Cc=H\ot C$, with $H$-bimodule structure
$$
g(h\ot c)g'=ghg'\sw 1\ot cg'\sw 2,
$$
for all $h,g,g'\in H$ and $c\in C$, coproduct $H\ot \Delta_C$ and counit $H\ot\eps_C$. In this way the category of $(H,C)$-Hopf modules is isomorphic to the category of right $\cC$-comodules, $\M^C_H(H)\cong \M^\Cc$. Furthermore, $C$ is a right $(H,C)$-Hopf module with the regular $H$-module and $C$-comodule structures. Hence there is a functor
$$\Gg=-\ot C:\M_k\to \M_H^C(H).$$
This functor has both a left adjoint $\Ff$ and a right adjoint $\Hh$ given by
\begin{eqnarray*}
\Ff=-\ot_Hk:\M_H^C(H)&\to& \M_k,\\
\Hh=\Hom_H^C(C,-):\M_H^C(H)&\to& \M_k.
\end{eqnarray*}
The adjointness of $(\Ff,\Gg)$ follows by \cite[Proposition 8.7.1]{Cae:blue}, the adjointness of $(\Gg,\Hh)$ is a general Hom-tensor relation.

An $H$-module coalgebra $C$ is called a \emph{Hopf-Galois co-object} if and only if the pair $(\Ff,\Gg)$ is an inverse equivalence. The uniqueness of adjoints implies that $C$ is a Galois co-object if and only if $(\Gg,\Hh)$ is a pair of inverse equivalences, which means in particular that $C$ is a right Galois comodule for the $H$-coring $\Cc=H\ot C$. By \cite[3.4 and 3.7]{V:equiv},  every Galois co-object $C$ is therefore finitely generated and projective as a right $H$-module. Furthermore, Example \ref{ex.Galois} shows that out of this Galois co-object we can construct a $B$-$H$ herd, where $B=\Endd^C_H(C)$, which describes exactly the Galois properties of $C$ as a right $\Cc$-comodule (see Theorem~\ref{thm.torsor.galois}).

\subsubsection*{The group of Galois co-objects and the composition of herds}

Recall from \cite[Theorem 8.7.4]{Cae:blue} that if $C$ is a Galois co-object for $H$, then the map
$$
\delta:C\ot H\to C\ot C,\qquad \delta(c\ot h)=c\sw 1\ot c\sw 2 h,
$$
is bijective.
Define $\hC$ as the left $H$-module, which is isomorphic to $C$ as  a $k$-module and with $H$-action given by
$$h\leftact \hc= \hc S^{-1}(h).$$
\begin{lemma}\label{le:coGalev}
Let $C$ be a Galois co-object for $H$, then 
$$
\ev:\hC\ot C\to H,\qquad \ev = (\eps_C \ot H)\circ \delta^{-1}, 
$$
 is an $H$-bilinear map.
\end{lemma}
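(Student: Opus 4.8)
The plan is to verify separately that $\ev$ is left and right $H$-linear, where I regard $\hC\ot C$ as an $H$-bimodule with left action through $\hC$, that is $h\cdot(\hc\ot c)=(h\leftact\hc)\ot c=\hc\, S^{-1}(h)\ot c$, and right action through $C$, that is $(\hc\ot c)\cdot h=\hc\ot ch$; the codomain $H$ is the regular $H$-bimodule. Since $\ev=(\eps_C\ot H)\circ\delta^{-1}$, in both cases the idea is to transport the action through the isomorphism $\delta^{-1}$ and then check that $\eps_C\ot H$ is linear for the induced action on $C\ot H$.

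Right linearity is the easy half. First I would observe that $\delta$ is right $H$-linear if $C\ot H$ carries the right action on the $H$-factor and $C\ot C$ the right action on the second factor, since $\delta(c\ot gh)=c\sw1\ot c\sw2 gh=\delta(c\ot g)h$; hence $\delta^{-1}$ is right $H$-linear too. As $(\eps_C\ot H)\big((c\ot g)h\big)=\eps_C(c)gh$ is visibly right $H$-linear, composing gives $\ev(\hc\ot ch)=\ev(\hc\ot c)h$. For the left linearity, the crux is to exhibit the left $H$-action on $C\ot H$ that $\delta$ intertwines with the first-factor action $h\cdot(x\ot y)=x\,S^{-1}(h)\ot y$ on $C\ot C$. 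I claim it is
\[
h\cdot(c\ot g)=c\,S^{-1}(h\sw2)\ot h\sw1 g .
\]
To prove $\delta\big(h\cdot(c\ot g)\big)=h\cdot\delta(c\ot g)$ I would expand using the module-coalgebra rule $\Delta_C(c\,h')=c\sw1 h'\sw1\ot c\sw2 h'\sw2$ and the coproduct rule $\Delta_H(S^{-1}(h))=S^{-1}(h\sw2)\ot S^{-1}(h\sw1)$, obtaining
\[
\delta\big(c\,S^{-1}(h\sw2)\ot h\sw1 g\big)=c\sw1 S^{-1}(h\sw3)\ot c\sw2 S^{-1}(h\sw2)h\sw1 g ,
\]
and then collapse $S^{-1}(h\sw2)h\sw1=\eps_H(h\sw1)1_H$ by the antipode identity $\sum S^{-1}(h\sw2)h\sw1=\eps_H(h)1_H$ (valid since $S$ is bijective); after recombining the surviving leg via $\sum\eps_H(h\sw1)h\sw2=h$ this yields exactly $c\sw1 S^{-1}(h)\ot c\sw2 g=h\cdot\delta(c\ot g)$. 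Granting this, $\delta^{-1}$ intertwines the two left actions, so $\ev\big((h\leftact\hc)\ot c\big)=\ev\big(\hc\,S^{-1}(h)\ot c\big)=(\eps_C\ot H)\big(h\cdot\delta^{-1}(\hc\ot c)\big)$, and it remains only to check that $\eps_C\ot H$ turns the action above into left multiplication: using that $C$ is a module coalgebra and $\eps_H\circ S^{-1}=\eps_H$,
\[
(\eps_C\ot H)\big(c\,S^{-1}(h\sw2)\ot h\sw1 g\big)=\eps_C(c)\,\eps_H(h\sw2)\,h\sw1 g=h\,\eps_C(c)\,g ,
\]
whence $\ev\big((h\leftact\hc)\ot c\big)=h\,\ev(\hc\ot c)$, as required.

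The only genuine computation is the identity $\delta\big(h\cdot(c\ot g)\big)=h\cdot\delta(c\ot g)$; the rest is bookkeeping. I expect the main obstacle to be organising the three-fold coproduct of $h$ correctly so that the counit identity $\sum S^{-1}(h\sw2)h\sw1=\eps_H(h)1_H$ is applied to the adjacent legs while the remaining leg sits in the first tensorand, after which $\sum\eps_H(h\sw1)h\sw2=h$ restores $S^{-1}(h)$.
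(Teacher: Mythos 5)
Your proof is correct and follows essentially the same route as the paper: both arguments use the bijectivity of $\delta$ to parametrise $\hC\ot C$ by $C\ot H$ and then reduce left $H$-linearity to the module-coalgebra rule, the anti-comultiplicativity of $S^{-1}$, and the antipode identity $\sum S^{-1}(h\sw 2)h\sw 1=\eps_H(h)1_H$. The only difference is organisational — you make the transported left action on $C\ot H$ explicit and verify that $\delta$ intertwines it, whereas the paper computes $\ev\bigl(h'\leftact d\sw 1\ot d\sw 2h\bigr)$ directly by inserting $\eps_C$ and $S\circ S^{-1}$ to recognise the element as lying in the image of $\delta$ — and the two computations are mutually inverse versions of the same manipulation.
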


\begin{proof}
The map $\ev$ is the {\em cotranslation map}, and the $H$-bilinearity property is a dualisation of the $H$-bicolinearity property of the {\em translation map};  see \cite[Remark~3.4~(d),~(e)]{Sch:rep}. We include the direct proof for completeness. 

Since $\delta$ is bijective, we can write an element in $C\ot C$ uniquely as a finite sum of elements  of the form $d\sw 1 \ot d\sw 2 h$, where $d\in C$, $h\in H$. By definition,  $\ev(d\sw 1 \ot d\sw 2 h)=\eps_C(d)h$. The map $\delta$ is a right $H$-module map, hence the right $H$-linearity of $\ev$ is clear. The left $H$-linearity is proven as follows:
\begin{eqnarray*}
\ev\bigg(h'\leftact d\sw 1\ot d\sw 2 h\bigg)&=& \ev\bigg(d\sw 1 S^{-1}(h')\ot d\sw 2 h\bigg) \\
&=&\ev\bigg(d\sw 1 S^{-1}(h')\sw 1\ot d\sw 2\varepsilon_C\big(S^{-1}(h')\sw 2\big) h\bigg)\\
&=&\ev\bigg(d\sw 1 S^{-1}(h')\sw 1\ot d\sw 2S^{-1}(h')\sw 2S\big(S^{-1}(h')\sw 3\big) h\bigg)\\
&=&\varepsilon\big(dS^{-1}(h')\sw 1\big)S\big(S^{-1}(h')\sw 2\big)h= \varepsilon_C(d)h'h\\
&=&h'\ev\big(d\sw 1\ot d\sw 2 h\big),
\end{eqnarray*}
where we used the antipode property in the third equality.
\end{proof}

Recall that the antipode $S$ of a commutative Hopf algebra $H$ is always involutive,  that is $S$ is bijective and $S^{-1} = S$.
Furthermore, if $H$ is a commutative Hopf algebra, then  the set of Galois co-objects forms a group with the tensor product over $H$ as the composition. Our next aim is to show that this composition can be obtained from the composition of herds as described in Section~\ref{se:composition}. For this purpose, we need to associate to a Galois co-object $C$ a bimodule herd different from the one described in example \ref{ex.Galois}.
For the rest of the section we assume that $H$ is commutative as $k$-algebra. A right $H$-module coalgebra $C$ is now understood as an $H$-bimodule with the same left and right action, and $\hC$ is an $H$-bimodule with the same left and right action $\leftact$ defined in the preamble to Lemma~\ref{le:coGalev}.

\begin{theorem}\label{th:Galcoobj}
Let $C$ be a Galois co-object for a commutative Hopf algebra $H$. With notation as above, $\hC$ is a formal dual of the $H$-bimodule $C$, with
$\ev =  (\eps_C \ot H)\circ \delta^{-1}$ and $\hev = S\circ \ev$.
Furthermore, $C$ is an $H$-$H$ herd with shepherd
$$\Delta_C^2=(\Delta_C\ot C)\circ\Delta_C=(C\ot \Delta_C)\circ \Delta_C: C\to C\ot \hC\ot C,$$
and pen $\hC$.
\end{theorem}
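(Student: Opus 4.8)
The plan is to treat the two halves of Definition~\ref{def.torsor} separately, throughout with $A=B=H$, $R=S=k$ and $\alpha=\beta=\eta_H$, and reading $\hev=S\circ\ev$ as the single $k$-linear map $C\ot C\to H$ twisted by the antipode (its source being regarded as $C\ot\hC$, that of $\ev$ as $\hC\ot C$). Everything rests on one computation. The canonical map $\delta$ is left $C$-colinear, $(\Delta_C\ot C)\circ\delta=(C\ot\delta)\circ(\Delta_C\ot H)$ (immediate from $\delta(c\ot h)=c\sw1\ot c\sw2h$), hence so is $\delta^{-1}$; applying $C\ot\eps_C\ot H$ to the colinearity identity for $\delta^{-1}$ gives the key formula
$$\delta^{-1}(c\ot d)=c\sw1\ot\ev(c\sw2\ot d).$$
Since $\delta(c\ot 1_H)=\Delta_C(c)$, feeding $\delta^{-1}(\Delta_C(c))=c\ot 1_H$ into $\ev=(\eps_C\ot H)\circ\delta^{-1}$ yields
$$\ev(c\sw1\ot c\sw2)=\eps_C(c)1_H,\qquad(\ast)$$
and applying $\eps_C\ot C$ to $\delta\circ\delta^{-1}=\mathrm{id}$ through the key formula gives
$$\textstyle\sum c\sw1\,\ev(c\sw2\ot d)=\eps_C(c)\,d.\qquad(\ast\ast)$$

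I would dispatch the shepherd axioms first, as they are formal. With $\gamma(c)=c\sw1\ot c\sw2\ot c\sw3$, diagram \eqref{eq.ass} asserts that $(\gamma\ot\hC\ot C)\circ\gamma$ and $(C\ot\hC\ot\gamma)\circ\gamma$ agree, and both equal $\Delta_C^{4}$ by coassociativity, so \eqref{eq.ass} is iterated coassociativity of $\Delta_C$. For \eqref{eq.coev} one computes $(\hev\ot C)\gamma(c)=S\big(\ev(c\sw1\ot c\sw2)\big)\ot c\sw3$; regrouping the first two legs as $\Delta_C(c\sw1)$ and invoking $(\ast)$ with $S(1_H)=1_H$ collapses this to $1_H\ot c$. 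Diagram \eqref{eq.ev} is the mirror image: $(C\ot\ev)\gamma(c)=c\sw1\ot\ev(c\sw2\ot c\sw3)$ collapses to $c\ot 1_H$ by regrouping the last two legs and using $(\ast)$.

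The substance lies in the formal-dual diagrams. Unwinding \eqref{diag.A} for the underlying elements $c,b,c'\in C$ gives $S(\ev(c\ot b))\,c'=c\,\ev(b\ot c')$. As both sides are $k$-bilinear in the pair $(b,c')$ and $\delta$ is surjective, it suffices to verify it on $b\ot c'=\delta(d\ot h)=d\sw1\ot d\sw2h$; right $H$-linearity of $\ev$ (Lemma~\ref{le:coGalev}) together with $(\ast)$ reduces the right-hand side to $\eps_C(d)\,ch$, so \eqref{diag.A} is equivalent to the antipode identity
$$\textstyle\sum S\big(\ev(c\ot d\sw1)\big)\,d\sw2=\eps_C(d)\,c.$$
Writing $\hc=\iota(c_1)$, $\hc'=\iota(c_2)$ and using $h\leftact\iota(b)=\iota(b\,S(h))$ with $S^2=\mathrm{id}$, diagram \eqref{diag.B} unwinds symmetrically to $c_1\,\ev(c\ot c_2)=c_2\,S(\ev(c_1\ot c))$. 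The decisive ingredient for both is the antipode symmetry of the cotranslation map, $S(\ev(c\ot d))=\ev(d\ot c)$, which is the dualisation of the $H$-bicolinearity of the translation map (cf.\ \cite[Remark~3.4]{Sch:rep}); granting it, the two displayed reductions become $\sum\ev(d\sw1\ot c)\,d\sw2=\eps_C(d)c$ and $c_1\,\ev(c\ot c_2)=c_2\,\ev(c\ot c_1)$, which I would close off from $(\ast\ast)$, the module-coalgebra compatibility of the $H$-action on $C$, and commutativity of $H$ for the final rearrangement.

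The main obstacle is exactly this antipode symmetry: unlike the shepherd axioms, the formal-dual diagrams are not consequences of coassociativity but force $\ev$ to be converted into $\hev$, and the argument genuinely uses that $H$ is commutative with involutive antipode $S=S^{-1}$ — already on group algebras the identity $S(\ev(c\ot b))c'=c\,\ev(b\ot c')$ fails in the noncommutative case. Once both families of diagrams are established, Definition~\ref{def.torsor} is met and $C$ is an $H$-$H$ herd with pen $\hC$ and shepherd $\gamma$; the two presentations $(\Delta_C\ot C)\Delta_C=(C\ot\Delta_C)\Delta_C$ of $\gamma$ coincide by coassociativity, so the statement is well posed.
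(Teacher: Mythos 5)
Your treatment of the shepherd axioms, the key formula $\delta^{-1}(c\ot d)=c\sw 1\ot\ev(c\sw 2\ot d)$, and the identities $(\ast)$, $(\ast\ast)$ is correct, and your reduction of \eqref{diag.A} to the element-wise identity $\sum S(\ev(c\ot d\sw 1))\,d\sw 2=\eps_C(d)\,c$ (and of \eqref{diag.B} to its analogue) is sound. The gap is in how you propose to finish. First, the ``antipode symmetry'' $S(\ev(c\ot d))=\ev(d\ot c)$ is asserted, not proved; unwinding it on spanning elements $c\ot d=e\sw 1\ot e\sw 2h$ shows it is equivalent to $\ev(e\sw 2\ot e\sw 1)=\eps_C(e)1_H$, a flipped version of $(\ast)$ that does not follow from the definition of $\ev$ and is essentially of the same depth as the formal-dual diagrams you are trying to prove — so granting it begs the question. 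Second, even granting it, the resulting identity $\sum\ev(d\sw 1\ot c)\,d\sw 2=\eps_C(d)\,c$ is \emph{not} $(\ast\ast)$: in $(\ast\ast)$ the evaluation consumes the \emph{second} Sweedler leg and the first survives, whereas here it consumes the first and the second survives. Commutativity of $H$ lets you reorder products, but interchanging the two legs of $\Delta_C(d)$ is cocommutativity of $C$, which is not assumed; so ``closing off from $(\ast\ast)$ and commutativity'' does not work.

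The repair is cheap and is exactly what the paper does: parametrise \emph{all} the tensor legs through $\delta$ at once, i.e.\ evaluate \eqref{diag.A} and \eqref{diag.B} on $\vartheta(c\ot h\ot h')=c\sw 1\ot c\sw 2h\sw 1\ot c\sw 3h\sw 2h'$ with $\vartheta=(C\ot\delta)\circ(\delta\ot H)$ bijective; both sides then collapse to $c\,\eps_H(h)h'$ using only $\ev\circ\delta=\eps_C\ot H$, right $H$-linearity, the antipode axiom $S(h\sw 1)h\sw 2=\eps_H(h)$ and commutativity. Equivalently, one further application of your own spanning trick ($c\ot d=e\sw 1\ot e\sw 2g$) proves $\sum S(\ev(c\ot d\sw 1))\,d\sw 2=\eps_C(d)\,c$ directly: the left-hand side becomes $S(\eps_C(e\sw 1)g\sw 1)\,e\sw 2\,g\sw 2=e\,S(g\sw 1)g\sw 2=\eps_H(g)e$, matching the right-hand side. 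With that substitution your argument closes, and the antipode symmetry becomes an unnecessary (and unproven) detour.
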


\begin{proof}
The maps $\ev$ and $\hev$ are $H$-bilinear by Lemma~\ref{le:coGalev} and by the fact that $S^{-1} = S$. 
Since $\delta$ is bijective, there is an isomorphism
$$
\vartheta=(C\ot\delta)\circ(\delta\ot H):C\ot H\ot H\to C\ot C\ot C.
$$
Thus to check the commutativity of diagrams  \eqref{diag.A} and \eqref{diag.B} suffices it  to evaluate them on elements $\vartheta(c\ot h\ot h') = c\sw 1\ot c\sw 2 h\sw 1\ot c\sw 3 h\sw 2h'$, where $c\in C$ and $h,h'\in H$. For \eqref{diag.A},
\begin{eqnarray*}
c\sw 1\ev(c\sw 2h\sw 1\ot c\sw 3h\sw 2 h')  \!\!\! &=\!\!\! & c\sw 1\varepsilon_C(c\sw 2)\varepsilon_H(h)h'=c\varepsilon_H(h)h'\\
& =\!\!\!  &
 \varepsilon_C(c\sw 1)S(h\sw 1)c\sw 2h\sw 2 h'=\hev(c\sw 1\ot c\sw 2h\sw 1)c\sw 3h\sw 2 h'. 
\end{eqnarray*}
where we used the antipode property in the penultimate equality.
The commutativity of  \eqref{diag.B} can be checked in a similar way: 
\begin{eqnarray*}
\ev(c\sw 1\ot c\sw 2h\sw 1)c\sw 3\leftact h\sw 2 h' \!\!\!&=\!\!\!&\varepsilon_C(c\sw 1) h\sw 1\leftact c\sw 2h\sw 2h' = cS(h\sw 1)h\sw 2h'\\
&\!\!\!=\!\!\!& c\varepsilon_H(h)h' = c\varepsilon_H(h)S^2(h') =
c\sw 1\rightact \varepsilon_C(c\sw 2)\varepsilon_H(h)S(h')\\
&\!\!\!=\!\!\!& c\sw 1\rightact \hev(c\sw 2h\sw 1\ot c\sw 3h\sw 2 h').
\end{eqnarray*}

Finally, we need to check that $\Delta^2_C$ is a shepherd. Clearly, the map satisfies the coassociativity condition. Since $\delta(c\ot 1_H)=c\sw 1\ot c\sw 2$,  $\ev(\Delta_C(c))=\varepsilon(c)1_H=\hev(\Delta_C(c))$.  Hence diagrams \eqref{eq.coev} and \eqref{eq.ev} commute as a consequence of the counit condition of $C$.
\end{proof}

\begin{theorem}
Let $C$ be a Galois co-object for a commutative Hopf-algebra $H$. Consider $C$ as an $H$-$H$ herd with a pen  $\hC$ and shephard $\Delta^2_C$ as in Theorem~\ref{th:Galcoobj}. Then the coalgebra $E$ defined by the  equaliser 
\[
\xymatrix{
E\ar[rr]^-{e} && \hC\ot C \ar@<.5ex>[rrrr]^-{(\ev\ot\hC\ot C)\circ(\hC\ot \Delta^2_C)} \ar@<-.5ex>[rrrr]_-{\eta_H\ot\hC\ot C} &&&& H\ot\hC\ot C
}
\]
 is isomorphic to $C$.
 
Symmetrically, the coalgebra $F$ defined by the equaliser of $(C\ot\hC\ot\hev)\circ(\Delta^2_C\ot\hC)$ and $(C\ot\hC\ot\eta_H)$ is also isomorphic to $C$.

Consequently, if $C$ is a flat $k$-module (and hence the herd $C$ is tame),  rings $A$ and $B$ constructed from the coherd associated to $C$ in Theorem~\ref{reconstr} are isomorphic to $H$.
\end{theorem}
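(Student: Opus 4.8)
The plan is to produce the coproduct $\Delta_C$ itself as the isomorphism $C\xrightarrow{\sim}E$, to obtain $C\xrightarrow{\sim}F$ by the mirror-image argument, and then to feed these into Theorem~\ref{reconstr} for the reconstruction clause. First I would note that the equaliser defining $E$ is exactly the equaliser \eqref{def.C} of Theorem~\ref{thm.entw}, specialised to $R=S=k$, $A=B=H$, $T=C$, pen $\hC$ and shepherd $\gamma=\Delta^2_C$; thus $E$ is the $k$-coalgebra produced there, with coproduct $\hat c\ot d\mapsto\hat c\ot\Delta^2_C(d)$ and counit $\ev\mid_E$. Writing the two equalised maps on $\hat c\ot d\in\hC\ot C$, membership in $E$ reads $\ev(\hat c\ot d\sw1)\ot d\sw2\ot d\sw3=1_H\ot\hat c\ot d$ inside $H\ot\hC\ot C$. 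Evaluating this on $\hat c\ot d=\Delta_C(c)$ and using coassociativity together with $\ev(\Delta_C(e))=\eps_C(e)1_H$ (from the proof of Theorem~\ref{th:Galcoobj}) shows $\Delta_C(c)\in E$, so $\Delta_C$ corestricts to $C\to E$, and the counit gives a retraction $p=\eps_C\ot C$ with $p\circ\Delta_C=\mathrm{id}_C$.

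The single genuine computation is $\Delta_C\circ p=\mathrm{id}_E$. For $\xi=\hat c\ot d\in E$ I would apply $\eps_H\ot\hC\ot C$ to the defining relation and use $\eps_H\circ\ev=\eps_C\ot\eps_C$ — which holds because $\eps_C$ is $H$-linear, equivalently $(\eps_C\ot\eps_H)\circ\delta^{-1}=\eps_C\ot\eps_C$. This collapses the relation to $\eps_C(c)\Delta_C(d)=\hat c\ot d$, i.e. $\Delta_C(p(\xi))=\xi$. Hence $\Delta_C\colon C\to E$ is bijective; it is a coalgebra map because $\Delta_E\circ\Delta_C$ and $(\Delta_C\ot\Delta_C)\circ\Delta_C$ both equal $\Delta^3_C$, while the counits match via $\ev(\Delta_C(c))=\eps_C(c)1_H$. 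The coalgebra $F$ is treated identically on the other side, using $\hev=S\circ\ev$ and $\eps_H\circ\hev=\eps_C\ot\eps_C$ (since $\eps_H\circ S=\eps_H$), again with isomorphism $\Delta_C$.

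For the last clause I would first record tameness: a Galois co-object $C$ is a progenerator over $H$, so $\ev,\hev$ are surjective by Lemma~\ref{lemma.formal.dual}, and flatness of $C$ over $k$ (with $\hC\cong C$) yields faithful flatness on both sides, so $(C,\Delta^2_C)$ is a tame $H$-$H$ herd. Then the coherd of Theorem~\ref{thm.cotor} is defined and Theorem~\ref{reconstr}(1) supplies ring maps $\nu_A\colon A'\to H=A$ and $\nu_B\colon B'\to H=B$. To see these are isomorphisms I would study the companion $\ol C=(E\ot\hC)\cap(\hC\ot F)$: using $E=\Delta_C(C)$ and $F=\Delta_C(C)$, one checks that $\Delta^2_C\colon\hC\to\hC\ot C\ot\hC$ lands in $\ol C$ (each membership condition follows from $\Delta^2_C=(\Delta_C\ot C)\Delta_C=(C\ot\Delta_C)\Delta_C$) and, using both conditions, that it is a two-sided inverse of the map $h$ of Theorem~\ref{thm.barT}(3). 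Invoking Theorem~\ref{reconstr}(2) then gives $A'\cong H$ and $B'\cong H$.

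The step I expect to be the main obstacle is precisely this identification of the companion. One must determine $\ol C$ on the nose and verify that $h$ is bijective: both defining conditions of $\ol C$ are genuinely needed, because $E\ot\hC$ and $\hC\ot F$ are each strictly larger than $\ol C$ (already for the trivial co-object $C=H$, where $\ol C$ is the image of $\Delta^2_C$). Consequently the reconstruction is best routed through Theorem~\ref{reconstr}(2), i.e. through the hypothesis that $h$ is an isomorphism, rather than through a literal coincidence $\ol C=\hC\ot F=E\ot\hC$.
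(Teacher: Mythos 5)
Your proof of the two coalgebra isomorphisms is correct and essentially coincides with the paper's: the paper also takes $\Delta_C$ as the map $C\to E$, gets injectivity from counitality, and gets surjectivity by applying $\eps_H$ to the first leg of the equalised relation (it phrases this as the identity $(\eps_H\ot C\ot C)\circ\omega\circ\delta=\Delta_C\circ\mu_{C,H}-\delta$ after parametrising $\hC\ot C$ by $\delta(C\ot H)$, which is the same computation as your $\eps_H\circ\ev=\eps_C\ot\eps_C$; note also that your retraction $\eps_C\ot C$ equals $\mu_{C,H}\circ\delta^{-1}$).

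Where you genuinely diverge is the reconstruction clause, and there your route is the right one. The paper disposes of this step by asserting $\ol{C}\cong E\ot\hC\cong \hC\ot F\cong C\ot C$ and appealing to Theorem~\ref{reconstr} -- in view of Remark~\ref{rem.reconstr}, to part (3), which needs the literal equality $\ol{C}=E\ot\hC=\hC\ot F$ of submodules of $\hC\ot C\ot\hC$. As you suspect, this equality fails in general: already for the trivial co-object $C=H$ (take $H=k[\ZZ/2]$) one has $E\ot\hC=\Delta_H(H)\ot H\neq H\ot\Delta_H(H)=\hC\ot F$, and the intersection is $\Delta^2_H(H)\cong H$, not $H\ot H$. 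Your identification $\ol{C}=\Delta^2_C(C)$ (apply $\eps_C$ to the first leg, using both membership conditions and flatness of $\hC$), together with $h\circ\Delta^2_C=\mathrm{id}$, shows $h:\ol{C}\to\hC$ is bijective, and Theorem~\ref{reconstr}(2) then delivers $A'\cong B'\cong H$. So your argument is not merely an alternative; it repairs the paper's final step. Two minor caveats: the coalgebra structure on $E$ via Theorem~\ref{thm.entw} needs one of its hypotheses (faithful flatness of $C$ over $k$, or splitness of $\eta_H$), so strictly speaking the flatness assumption is also used there; and tameness (surjectivity of $\ev$ and $\hev$) is granted by the statement of the theorem, so your progenerator justification, while plausible, is not load-bearing.
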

\begin{proof}
Set $\omega=(\ev\ot\hC\ot C)\circ(\hC\ot \Delta^2_C)-(\eta_H\ot\hC\ot C)$. For any $\delta(c\ot h)=c\sw 1\ot c\sw 2h\in\hC\ot C$, 
$$
\omega(c\sw 1\ot c\sw 2h)=h\sw 1\ot c\sw 1h\sw 2\ot c\sw 2h\sw 3-1_H\ot c\sw 1\ot c\sw 2h.
$$
This implies that
\begin{equation}\label{eq.eq}
\omega\circ\Delta_C=0, \qquad (\eps_H\ot C\ot C)\circ \omega \circ \delta = \Delta_C\circ\mu_{C,H} -\delta,
\end{equation}
where $\mu_{C,H} : C\ot H\to C$ is the multiplication of $H$ on $C$. By the first of equations \eqref{eq.eq} and the universal property of equalisers,  there is a map $\nu_C:C\to E$ such that  $\Delta_C=e\circ\nu_C$. Since  $\Delta_C$ is injective, so is $\nu_C$. The second of equations \eqref{eq.eq} implies that $e\circ\nu_C\circ\mu_{C,H}\circ\delta^{-1}\circ e$ is the identity map on $E$. Hence $\nu_C$ is surjective.

The statement about the coalgebra $F$ follows by symmetric arguments. The statement about the rings $A$ and $B$ follows by the fact that $\ol{C}\cong \hC\ot E\cong F\ot \hC\cong C\ot C$ as $k$-modules and then by Theorem~\ref{reconstr}.
\end{proof}

Consider two $H$-module coalgebras $C$ and $D$ over a commutative Hopf algebra$H$. Then $C\ot_HD$ is again an $H$-module coalgebra with the $H$-module structure given by
$$
(c\ot_H d)h=c\ot_H(dh)= (ch)\ot_H d, 
$$
and comultiplication
$$
\Delta_{C\ot_HD}(c\ot_Hd)=c\sw 1\ot_H d\sw 1\ot c\sw 2\ot_Hd\sw 2.
$$
Moreover, if $C$ and $D$ are Galois co-objects, then $C\ot_HD$ is again a Galois co-object (see \cite[Section 10.1]{Cae:blue}). In particular, the map
$$
\delta_{C\ot_HD}:C\!\ot_H\!D\!\ot\! H\to C\!\ot_H\! D\! \ot\!  C\! \ot_H\! D,\qquad c\ot_H d\ot h \mapsto c\sw 1\ot_H d\sw 1\ot c\sw 2\ot_Hd\sw 2h,
$$
is an isomorphism as the composite of isomorphisms
\begin{eqnarray*}
C\ot_HD\ot H&\cong& C\ot_HD\ot H\ot_HH\cong (C\ot H)\ot_{H\ot H}(D\ot H)\\
&\cong& (C\ot C)\ot_{H\ot H}(D\ot D)\cong (C\ot_HD)\ot (C\ot_HD),
\end{eqnarray*}
where $\delta_C\ot_{H\ot H}\delta_D$ is  the penultimate isomorphism.

There are two ways of constructing a formal dual $\widehat{C\ot_HD}$ of $C\ot_HD$: one as in Theorem~\ref{th:Galcoobj} (with evaluation maps denoted by $\ev$ and $\hev$) the other as in Lemma~\ref{lemma.composition.dual}. The latter construction gives a formal dual of the form $\widehat{D}\ot_H\widehat{C}$. The following lemma asserts that both constructions are mutually equivalent.

\begin{lemma}
With notation as above, the twist map
$$\tau:\widehat{C\ot_HD}\to \widehat{D}\ot_H\widehat{C},$$
is an isomorphism of $H$-modules. Furthermore, the following diagrams commute,
\[
\xymatrix{
\widehat{C\ot_HD}\ot C\ot_HD \ar[rrr]^-{\ev} \ar[d]_{\tau\ot C\ot_HD} &&& H \ar@{=}[d] \\
\widehat{D}\ot_H\widehat{C}\ot C\ot_HD \ar[rr]_-{\widehat{D}\ot_H\ev_C\ot_HD} && \widehat{D}\ot_H D \ar[r]_-{\ev_D} & H
}
\]
\[
\xymatrix{
C\ot_HD\ot \widehat{C\ot_HD} \ar[rrr]^-{\hev} \ar[d]_{C\ot_HD\ot\tau} &&& H \ar@{=}[d] \\
C\ot_HD\ot \widehat{D}\ot_H\widehat{C} \ar[rr]_-{C\ot_H\hev_D\ot_H\widehat{C}} && C\ot_H \widehat{C} \ar[r]_-{\hev_C} & H
}
\]
\end{lemma}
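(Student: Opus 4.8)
The plan is to recognise $\tau$ concretely as a flip, check that it is an isomorphism of $H$-modules, and then verify the two compatibility squares by testing them on elements in the image of $\delta_{C\ot_H D}$, which is legitimate because $\delta_{C\ot_H D}$ was just shown to be bijective.

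First I would make $\tau$ explicit. Both $\widehat{C\ot_H D}$ and $\widehat D\ot_H\widehat C$ have, as underlying $k$-modules, the modules $C\ot_H D$ and $D\ot_H C$ respectively, since the hat construction does not change the underlying $k$-module. Thus $\tau$ is the flip $\widehat{c\ot_H d}\mapsto\hd\ot_H\hc$. Because $H$ is commutative and $C,D$ are $H$-bimodules with coinciding left and right actions, this flip is a well-defined $k$-linear isomorphism. For $H$-linearity I would compute, using $h\leftact\hc=\hc S^{-1}(h)$ and $(c\ot_H d)h=c\ot_H(dh)$, that both $\tau(h\leftact\widehat{c\ot_H d})$ and $h\leftact\tau(\widehat{c\ot_H d})$ equal $\widehat{dS^{-1}(h)}\ot_H\hc$, so that $\tau$ intertwines the left $H$-actions (right $H$-linearity is symmetric).

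For the first square I would evaluate on $\delta_{C\ot_H D}(c\ot_H d\ot h)=c\sw1\ot_H d\sw1\ot c\sw2\ot_H d\sw2 h$, the first tensor factor being read in $\widehat{C\ot_H D}$. The upper path is the evaluation of the Galois co-object $C\ot_H D$, namely $(\eps_{C\ot_H D}\ot H)\circ\delta_{C\ot_H D}^{-1}$, and sends this element to $\eps_C(c)\eps_D(d)h$. Along the lower path, $\tau$ turns the hatted leg into $\widehat{d\sw1}\ot_H\widehat{c\sw1}$; the map $\widehat D\ot_H\ev_C\ot_H D$ contracts $\widehat{c\sw1}$ against $c\sw2$, giving $\ev_C(\widehat{c\sw1}\ot c\sw2)=\eps_C(c)1_H$ (the relation $\ev_C\circ\Delta_C=\eps_C$ recorded in the proof of Theorem~\ref{th:Galcoobj}); and finally $\ev_D$ contracts $\widehat{d\sw1}$ against $d\sw2 h$, giving $\eps_D(d)h$. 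The two outcomes coincide. The second square is treated the same way, now testing on $(c\sw1\ot_H d\sw1)\ot\widehat{(c\sw2\ot_H d\sw2 h)}$ and using $\hev=S\circ\ev$ together with $\hev_C=S\circ\ev_C$ and $\hev_D=S\circ\ev_D$; the upper path yields $S(\eps_C(c)\eps_D(d)h)=\eps_C(c)\eps_D(d)S(h)$, and the lower path produces the same value.

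The one genuinely delicate point in the second square, and the place where I would be most careful, is the absorption of the intermediate $H$-factor: after applying $\hev_D$ one is left with an element of $C\ot_H H\ot_H\widehat C$, and moving the middle $H$-factor into the $\widehat C$-leg through the action $\leftact$ introduces an antipode which must combine correctly with the antipodes coming from $\hev=S\circ\ev$. Tracking this, together with the simultaneous bookkeeping of the hat identification $\widehat C\cong C$, the flip performed by $\tau$, and the group-like $H$-factors, is the main source of computation; no ingredient beyond coassociativity, the module-coalgebra axioms, and the antipode identity is needed. A more conceptual route that sidesteps most of the calculation is to invoke the essential uniqueness of a formal dual with surjective evaluation maps (Lemma~\ref{lemma.formal.dual}): both $\widehat{C\ot_H D}$ and $\widehat D\ot_H\widehat C$ are formal duals of $C\ot_H D$, hence canonically isomorphic in a way compatible with $\ev$ and $\hev$, and one then checks that this canonical isomorphism is exactly the flip $\tau$.
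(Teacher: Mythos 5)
Your proposal is correct and follows essentially the same route as the paper: both use the bijectivity of $\delta_{C\ot_HD}$ to reduce the diagram checks to generators of the form $c\sw 1\ot_H d\sw 1\ot c\sw 2\ot_H d\sw 2h$ and then compute both paths to $\eps_C(c)\eps_D(d)h$. You supply somewhat more detail than the paper (which omits the $H$-linearity check for $\tau$ and dismisses the second square as ``similar''), but nothing in substance differs.
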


\begin{proof}
We only check the commutativity of the first diagram, the commutativity of the second diagram follows by similar arguments. 
By bijectivity of $\delta_{C\ot_HD}$,  an element of $\widehat{C\ot_HD}\ot C\ot_HD$ is a $k$-linear combination of $c\sw 1\ot_H d\sw 1\ot c\sw 2\ot_Hd\sw 2h$, with $c\in C$, $d\in D$ and $h\in H$.  Note that $\ev(c\sw 1\ot_H d\sw 1\ot c\sw 2\ot_Hd\sw 2h)=\varepsilon_C(c)\varepsilon_D(d)h$. On the other hand
\begin{eqnarray*}
\ev_D\circ(\widehat{D}\ot_H\ev_C\ot_HD)(\tau(c\sw 1\ot_H d\sw 1)\ot c\sw 2\ot_Hd\sw 2h)\\
&&\hspace{-7cm}=\ev_D(d\sw 1\ot_H \ev_C(c\sw 1\ot c\sw 2)d\sw 2h)\\
&&\hspace{-7cm}=\ev_D(d\sw 1\ot_H \varepsilon_C(c)d\sw 2h)
=\varepsilon_C(c)\varepsilon_D(d)h.
\end{eqnarray*}
This completes the proof.
\end{proof}

\begin{lemma}\label{le:smash}
Given $H$-Galois co-objects $C$ and $D$, write $\Delta_{C,H}^2$ for the projection of $\Delta_C^2$ to $C\ot_H\widehat{C}\ot C$ and $\Delta_{D,H}^2$ for the projection of $\Delta_D^2$ to $D\ot\widehat{D}\ot_H D$. Consider the $H$-coring $\Cc=\widehat{C}\ot C$ with comultiplication 
$$
\widehat{C}\ot \Delta^2_{C,H}:\widehat{C}\ot C\to\widehat{C}\ot C\ot_H\widehat{C}\ot C,
$$ and counit $\ev_C$, and the $H$-coring $\Dd=D\ot \widehat{D}$ with comultiplication
$$
\Delta^2_{D,H}\ot\widehat{D}:D\ot\widehat{D}\to D\ot\widehat{D}\ot_H D\ot\widehat{D},
$$
and counit $\hev_D$ (see Corollary~\ref{cor.coring}). Then the map
$\sigma:\Cc\ot_H\Dd\to \Dd\ot_H\Cc$, for all $x\in C, d\in D, h\in H$, given by 
$$
\sigma(c\sw 1\ot c\sw 2\ot_H d\sw 1\ot d\sw 2h)=d\sw 1\ot d\sw 2\ot_H c\sw 1\ot c\sw 2S(h),$$
defines a smash coproduct between $\Cc$ and $\Dd$.
\end{lemma}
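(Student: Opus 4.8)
The plan is to verify that $\sigma$ satisfies the defining conditions of a smash coproduct, i.e.\ the hexagon diagrams \eqref{eq.sigmaE} and the counit diagrams \eqref{eq.varepsilonE} with $\Ee$ replaced by $\Dd$; by the discussion preceding Theorem~\ref{th:composition} this is equivalent to $\Dd\ot_\sigma\Cc$ being an $H$-coring, which is exactly what it means for $\sigma$ to define a smash coproduct between $\Cc$ and $\Dd$. Throughout I use the coring structures from Corollary~\ref{cor.coring}, so that $\Delta_\Cc(\hc\ot c')=\hc\ot c'\sw 1\ot_H c'\sw 2\ot c'\sw 3$, $\Delta_\Dd(d\ot\hd)=d\sw 1\ot d\sw 2\ot_H d\sw 3\ot\hd$, with counits $\ev_C$ and $\hev_D$, and the identities $\ev_C(c\sw 1\ot c\sw 2 h)=\eps_C(c)h$, $\hev_D(d\sw 1\ot d\sw 2 h)=\eps_D(d)S(h)$ coming from Lemma~\ref{le:coGalev} and Theorem~\ref{th:Galcoobj}.

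First I would settle well-definedness and $H$-bilinearity. Since $C$ and $D$ are Galois co-objects, the maps $\delta_C$ and $\delta_D$ are bijective (\cite[Theorem~8.7.4]{Cae:blue}), so—as in the isomorphism $\delta_C\ot_{H\ot H}\delta_D$ exploited above—every element of $\Cc\ot_H\Dd=\hC\ot C\ot_H D\ot\hD$ is a combination of elements in the normal form $c\sw 1\ot c\sw 2\ot_H d\sw 1\ot d\sw 2 h$, and the assignments $c\ot d\ot h\mapsto \delta_C(c\ot 1)\ot_H\delta_D(d\ot h)$ and $d\ot c\ot h\mapsto\delta_D(d\ot 1)\ot_H\delta_C(c\ot h)$ give $k$-linear isomorphisms $C\ot D\ot H\cong\Cc\ot_H\Dd$ and $D\ot C\ot H\cong\Dd\ot_H\Cc$. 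Under these, $\sigma$ is identified with the manifestly well-defined map $c\ot d\ot h\mapsto d\ot c\ot S(h)$, which settles well-definedness; $H$-bilinearity is then checked by tracking the twisted actions (the right $H$-action on $\hC,\hD$ being $\hc\cdot h=\hc S(h)$), where the involutivity $S^{-1}=S$ of the commutative antipode is used.

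Next I would verify the two counit conditions \eqref{eq.varepsilonE}. Evaluating on the normal form, $(\hev_D\ot\Cc)\circ\sigma$ sends $c\sw 1\ot c\sw 2\ot_H d\sw 1\ot d\sw 2 h$ to $\eps_D(d)\,c\sw 1\ot c\sw 2 S(h)$, which equals $\Cc\ot\hev_D$ applied to the same element; and $(\Dd\ot\ev_C)\circ\sigma$ produces $\eps_C(c)\,d\sw 1\ot d\sw 2 S(S(h))=\eps_C(c)\,d\sw 1\ot d\sw 2 h$, matching $\ev_C\ot\Dd$. In the second computation it is precisely the involutivity $S^{-1}=S$ that turns the right action by $S(h)$ back into $h$.

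The main work, and the expected obstacle, is the pair of hexagon identities in \eqref{eq.sigmaE}, namely the compatibility of $\sigma$ with $\Delta_\Cc=\hC\ot\Delta^2_{C,H}$ and with $\Delta_\Dd=\Delta^2_{D,H}\ot\hD$. I would evaluate both sides on the normal-form element and expand using coassociativity of $\Delta_C,\Delta_D$, the module-coalgebra axiom $\Delta_C(ch)=c\sw 1 h\sw 1\ot c\sw 2 h\sw 2$, commutativity of $H$, and $\Delta_H(S(h))=S(h\sw 2)\ot S(h\sw 1)$. The delicate point is that $\sigma$ transfers the twist from $d\sw 2 h$ to $c\sw 2 S(h)$, so applying $\Delta_\Cc$ on the image side yields $c\sw 1\ot c\sw 2 S(h\sw 3)\ot_H c\sw 3 S(h\sw 2)\ot c\sw 4 S(h\sw 1)$; moving $S(h\sw 3)$ across $\ot_H$ picks up $S^2=\mathrm{identity}$ through the $\leftact$-action and, together with the antipode relation $S(h\sw 2)h\sw 3=\eps(h\sw 2)1_H$ and the counit, collapses this to $c\sw 1\ot c\sw 2\ot_H c\sw 3\ot c\sw 4 S(h)$, which is exactly the expression obtained from the other route $(\sigma\ot\Cc)\circ(\Cc\ot\sigma)\circ(\Delta_\Cc\ot\Dd)$. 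The analogous computation for the $\Delta_\Dd$-compatibility is symmetric. Once this index bookkeeping is organised, both pentagons of \eqref{eq.sigmaE} reduce to coassociativity, and the proof is complete.
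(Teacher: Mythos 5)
Your proposal is correct and follows essentially the same route as the paper's proof: reduce to the normal form $c\sw 1\ot c\sw 2\ot_H d\sw 1\ot d\sw 2h$ via bijectivity of $\delta_C$ and $\delta_D$, check $H$-bilinearity and the counit conditions \eqref{eq.varepsilonE} directly (using $S^{-1}=S$), and verify the pentagons \eqref{eq.sigmaE} by a Sweedler-index computation in which the factor $S(h)\sw 1$ is pushed across $\ot_H$ through the twisted $\leftact$-action and cancelled by the antipode axiom. The only differences are presentational (you phrase well-definedness via the identification with $C\ot D\ot H$ and leave the left $H$-linearity computation implicit, which the paper writes out in full).
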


\begin{proof}
Note that $\sigma$ is well-defined, since the combined isomorphism
\[
\xymatrix{
C\ot D\ot H\cong C\ot H\ot_H D\ot H \ar[rr]^-{\delta_C\ot_H\delta_D} && C\ot C\ot_H D\ot D,
}
\]
means that any element of $\cC\ot_H\cD$ is a $k$-linear combination of $c\sw 1\ot c\sw 2\ot_Hd\sw 1\ot d\sw 2h$, for $c\in C, d\in D, h\in H$. The involutivity of $S$ and the definition of $\sigma$ immediately imply that $\sigma$ is a right $H$-linear map.
For the left $H$-linearity, first note that the repeated application of the antipode and counit axioms yields, for all $c\in C, d\in D, g,h\in H$
\begin{eqnarray*}
g \leftact c\sw 1\ot c\sw 2\ot_Hd\sw 1\ot d\sw 2h &&\\
&&\hspace{-5.5cm}=c\sw 1S(g)\sw 1\ot c\sw 2S(g)\sw 2 \ot_H d\sw 1S(S(g)\sw 3)\sw 1\ot 
d\sw 2S(S(g)\sw 3)\sw 2 S(S(S(g)\sw 3)\sw 3)h.
\end{eqnarray*}
Applying $\sigma$ to the above equation, and using the properties of the antipode, including $S^{-1} =S$, we obtain
\begin{eqnarray*}
d\sw 1S(S(g)\sw 3)\sw 1\ot d\sw 2 S(S(g)\sw 3)\sw 2 \ot_H
c\sw 1S(g)\sw 1\ot c\sw 2S(g)\sw 2   S(S(g)\sw 3)\sw 3 S(h)\\
&&\hspace{-12cm}=d\sw 1 S^2(g\sw 1)\ot d\sw 2 S^2(g\sw 2)\ot_H
c\sw 1 S(g\sw 5) \ot c\sw 2 S(g\sw 4) S^2(g\sw 3) S(h)\\
&&\hspace{-12cm}=d\sw 1 g\sw 1\ot d\sw 2 g\sw 2\ot_H
c\sw 1 S(g\sw 4) \ot c\sw 2 \varepsilon_H(g\sw 3) S(h)\\ 
&&\hspace{-12cm}=d\sw 1 g\ot d\sw 2 \ot_H
c\sw 1  \ot c\sw 2 S(h). 
\end{eqnarray*}
This proves that $\sigma$ is also  left $H$-linear. 
To check the left pentagon in \eqref{eq.sigmaE}, for any
$c\sw 1\ot c\sw 2\ot_Hd\sw 1\ot d\sw 2h\in\Cc\ot_H\Dd$, compute
\begin{eqnarray*}
(\sigma\ot_H\Cc)\circ(\Cc\ot_H\sigma)(c\sw 1\ot c\sw 2\ot_Hc\sw 3\ot c\sw 4\ot_Hd\sw 1\ot d\sw 2h)\\
&&\hspace{-6cm}=d\sw 1\ot d\sw 2\ot_Hc\sw 1\ot c\sw 2\ot_Hc\sw 3\ot c\sw 4S(h).
\end{eqnarray*}
On the other hand
\begin{eqnarray*}
(\Dd\ot_H\Delta_\Cc)\circ\sigma(c\sw 1\ot c\sw 2\ot_Hd\sw 1\ot d\sw 2h)\\
&&\hspace{-6cm}=d\sw 1\ot d\sw 2\ot_Hc\sw 1\ot c\sw 2S(h)\sw 1\ot_Hc\sw 3S(h)\sw 2\ot c\sw 4S(h)\sw 3\\
&&\hspace{-6cm}=d\sw 1\ot d\sw 2\ot_Hc\sw 1\ot c\sw 2\ot_Hc\sw 3S(S(h)\sw 1)S(h)\sw 2\ot c\sw 4S(h)\sw 3\\
&&\hspace{-6cm}=d\sw 1\ot d\sw 2\ot_Hc\sw 1\ot c\sw 2\ot_Hc\sw 3\ot c\sw 4S(h).
\end{eqnarray*}
The commutativity of the right pentagon \eqref{eq.sigmaE} is easy. To check the right diagram in \eqref{eq.varepsilonE} take again $c\sw 1\ot c\sw 2\ot_Hd\sw 1\ot d\sw 2h\in\Cc\ot_H\Dd$, and compute
\begin{eqnarray*}
d\sw 1\ot d\sw 2\rightact\ev_C(c\sw 1\ot c\sw 2S(h))&=&d\sw 1\ot d\sw 2\rightact \varepsilon_C(c)S(h)\\
&=&\varepsilon_C(c)d\sw 1\ot d\sw 2 h = \ev_C(c\sw 1\ot c\sw 2)d\sw 1\ot d\sw 2h .
\end{eqnarray*}
Similarly,
$$
\hev_D(d\sw 1\ot d\sw 2)\leftact c\sw 1\ot c\sw 2S(h) =\varepsilon_D(d)c\sw 1\ot c\sw 2S(h)
=c\sw 1\ot c\sw 2\hev_D(d\sw 1\ot d\sw 2h),
$$
which expresses the commutativity of the left diagram in \eqref{eq.varepsilonE}. So we conclude that $\sigma$ is a smash coproduct map as required.
\end{proof}

\begin{theorem}
Let $H$ be a commutative Hopf algebra. Consider two Galois co-objects $C$ and $D$. 
Then the herd associated to the composed Galois co-object $C\ot_HD$ coincides with the herd obtained by composing the herd associated to $C$ and the herd associated to $D$, using the smash coproduct described in Lemma \ref{le:smash}.
\end{theorem}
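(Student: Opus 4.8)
The plan is to show that the two herd structures carried by $V=C\ot_HD$ coincide: the one obtained directly from Theorem~\ref{th:Galcoobj} applied to the Galois co-object $C\ot_HD$ (with pen $\widehat{C\ot_HD}$ and shepherd $\Delta^2_{C\ot_HD}$), and the one produced by Theorem~\ref{th:composition} from the $H$-$H$ herds $C$ and $D$ together with the smash-coproduct map $\sigma$ of Lemma~\ref{le:smash} (with pen $\hat D\ot_H\hat C$). The existence of the composed herd is guaranteed by Lemma~\ref{le:smash} and the forward implication of Theorem~\ref{th:composition}. Both structures live on the same $H$-$H$ bimodule $V$, so, since by the remark following Proposition~\ref{prop.dual} a herd does not depend on the choice of formal dual, it suffices to identify the two pens through the twist isomorphism $\tau:\widehat{C\ot_HD}\to\hat D\ot_H\hat C$ --- which, by the lemma preceding Lemma~\ref{le:smash}, intertwines the evaluation and coevaluation maps --- and then to check that the two shepherds correspond under $\tau$. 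This reduces the whole statement to one computation in Sweedler notation.

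First I would compute the direct shepherd. From $\Delta_{C\ot_HD}(c\ot_Hd)=c\sw 1\ot_Hd\sw 1\ot c\sw 2\ot_Hd\sw 2$ and coassociativity,
$$
\Delta^2_{C\ot_HD}(c\ot_Hd)=(c\sw 1\ot_Hd\sw 1)\ot(c\sw 2\ot_Hd\sw 2)\ot(c\sw 3\ot_Hd\sw 3),
$$
the middle tensorand lying in $\widehat{C\ot_HD}$. Transporting this factor along $\tau$ turns $c\sw 2\ot_Hd\sw 2$ into $d\sw 2\ot_Hc\sw 2\in\hat D\ot_H\hat C$, so the direct shepherd, read in the pen $\hat D\ot_H\hat C$, equals
$$
(c\sw 1\ot_Hd\sw 1)\ot(d\sw 2\ot_Hc\sw 2)\ot(c\sw 3\ot_Hd\sw 3).
$$

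Next I would compute the composed shepherd $\gamma_V=(V\ot_H\sigma\ot_HV)\circ(\Delta^2_{C,H}\ot_H\Delta^2_{D,H})$ from the proof of Theorem~\ref{th:composition}, where $\Delta^2_{C,H}(c)=c\sw 1\ot_Hc\sw 2\ot c\sw 3$ and $\Delta^2_{D,H}(d)=d\sw 1\ot d\sw 2\ot_Hd\sw 3$ are the projections of Notation~\ref{not.gamma} (these are the $\gamma_{C,A}$ and $\gamma_{D,A}$ of Theorem~\ref{th:composition}). The factor to which $\sigma$ is applied is $c\sw 2\ot c\sw 3\ot_Hd\sw 1\ot d\sw 2\in\Cc\ot_H\Ee$; by Lemma~\ref{le:smash} it is sent to $d\sw 1\ot d\sw 2\ot_Hc\sw 2\ot c\sw 3$, and reassembling yields
$$
\gamma_V(c\ot_Hd)=(c\sw 1\ot_Hd\sw 1)\ot(d\sw 2\ot_Hc\sw 2)\ot(c\sw 3\ot_Hd\sw 3),
$$
which is exactly the transported direct shepherd. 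As a cross-check one may verify that this $\gamma_V$ satisfies the characterising equations~\eqref{eq.torsorV}.

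The main obstacle is the careful bookkeeping in the $\sigma$-step, where the antipode could in principle intervene. The formula for $\sigma$ in Lemma~\ref{le:smash} is written on the normal form $c\sw 1\ot c\sw 2\ot_Hd\sw 1\ot d\sw 2h$ afforded by the bijectivity of $\delta_C$ and $\delta_D$, and it carries a factor $S(h)$. I must verify that the element fed to $\sigma$ in the shepherd computation has $H$-parameter $h=1_H$ --- equivalently, that its $\Cc$- and $\Ee$-components are genuine coproducts $\delta_C(-\ot 1_H)$ and $\delta_D(-\ot 1_H)$ --- so that $S(h)=1_H$ acts trivially. Confirming this requires tracking the $\ot_H$-balancings, both between $C$ and $D$ (where commutativity of $H$ prevents any antipode from surfacing) and between the twisted modules $\hat C,\hat D$ and their ambient coalgebras. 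Once this normalisation is established the antipode drops out of the present calculation and the two shepherds visibly agree, while the antipode remains, of course, indispensable for $\sigma$ to define the smash coproduct at all.
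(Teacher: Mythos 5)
Your proposal is correct and follows essentially the same route as the paper: both identify the directly constructed shepherd $\Delta^2_{C\ot_HD}$ with the composed shepherd $(C\ot_H\sigma\ot_HD)\circ(\Delta^2_{C,H}\ot_H\Delta^2_{D,H})$ via the twist $C\ot_HD\ot\tau\ot C\ot_HD$ on the middle (pen) factor. Your explicit Sweedler computation, including the check that the element fed to $\sigma$ sits in normal form with $h=1_H$ so that the antipode factor $S(h)$ is trivial, just spells out the commutativity of the square diagram that the paper asserts without further detail.
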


\begin{proof}
By Theorem \ref{th:Galcoobj},  the shepherd of $C\ot_HD$ is given by 
$$
\Delta_{C\ot_HD}^2:C\ot_HD\to C\ot_HD\ot\widehat{C\ot_HD}\ot C\ot_HD.
$$ 
On the other hand, the shepherd of the composed herd  constructed by Theorem~\ref{th:composition} is given by 
$$
\gamma_{C\ot_HD}=\left(C\!\ot_H\!\sigma\!\ot_H\!D\right)\circ\left(\Delta^2_{C,H}\ot_H\Delta_{D,H}^2\right):C\!\ot_H\!D\to C\ot_HD\ot\widehat{D}\ot_H\widehat{C}\ot C\ot_HD.
$$ 
These two herd structures can be mutually identified by the commutativity of  the following diagram
\[
\xymatrix{
C\ot_HD \ar[rr]^-{\Delta_{C\ot_HD}^2} \ar[d]_{\Delta^2_{C,H}\ot_H\Delta^2_{D,H}} && C\ot_HD\ot\widehat{C\ot_HD}\ot C\ot_HD \ar[d]^{C\ot_HD\ot\tau\ot C\ot_HD} \\
C\ot_H\widehat{C}\ot C\ot_HD\ot\widehat{D}\ot_HD \ar[rr]_-{C\ot_H\sigma\ot_HD} 
&& C\ot_HD\ot\widehat{D}\ot_H\widehat{C}\ot C\ot_HD \ .
}
\]
\end{proof}

\begin{remarks}
Dualising the results of this section, it is possible to construct bicomodule coherds out of Galois objects over a cocommutative Hopf algebra $H$. These Galois objects are known to form a group under the cotensor product. This composition would be then   related to a composition of bicomodule coherds by means of smash products.

In \cite[Chapter 10]{Fem:PhD}, the group of Galois coobjects for a commutative Hopf-algebroid is computed. The results of this section can be extended to this more general framework. 
\end{remarks}

\appendix

\section{The categorical formulation of (co)herd bi(co)modules}
\setcounter{equation}0
Let $\fR$ and $\fS$ be categories, and take a monad $\mathbf{A} = (A,m_A,\eta_A)$ on $\fR$ ($A$ is an endofunctor: $\fR \to \fR$, $m_A$ is a multiplication and $\eta_A$ is a unit) and a monad $\mathbf{B} = (B,m_B,\eta_B)$ on 
$\fS$. Let $F: \fS\to \fR$ be an $A$-$B$ bialgebra (or bimodule) functor. This means that $F$ comes equipped with
natural transformations $\varrho: FB\to F$ and $\lambda: AF\to F$ such that
$$
\xymatrix{FBB \ar[rr]^-{Fm_B}\ar[d]_{\varrho B} && FB \ar[d]^{\varrho} \\
FB \ar[rr]^-{\varrho } && F\ ,} \qquad 
\xymatrix{FB \ar[r]^\varrho & F\\
 & F \ar[u]_{=} \ar[ul]^{F\eta_B} \ ,}
 $$
 $$
\xymatrix{AAF \ar[rr]^-{m_AF}\ar[d]_{A\lambda} && AF \ar[d]^{\lambda} \\
AF \ar[rr]^-{\lambda } && F\ ,} \qquad 
\xymatrix{FA \ar[r]^\lambda & F \\
 & F \ar[u]_{=} \ar[ul]^{F\eta_A} \ ,} \qquad 
 \xymatrix{AFB \ar[rr]^-{\lambda B}\ar[d]_{A\varrho} && FB \ar[d]^{\varrho} \\
AF \ar[rr]^-{\lambda  } && F\ .}
 $$
Consider  a $B$-$A$ bialgebra functor $\hF: \fR\to\fS$ with structure maps $\hrho$, $\hlambda$. Then $F\hF$ is an $A$-$A$ bialgebra functor with structure
natural transformations $\lambda \hF$ and $F\hrho$. Similarly $\hF F$ is a $B$-$B$ bialgebra functor with structure natural transformations $\hlambda F$ and $\hF \varrho$.  With these data $\hF$   together with bialgebra natural transformations
$$
\ev : F\hF\to A, \qquad \hev: \hF F\to B,
$$
such that
$$
 \xymatrix{F\hF F \ar[rr]^-{F\hev}\ar[d]_{\ev F} && FB \ar[d]^{\varrho} \\
AF \ar[rr]^-{\lambda } && F\ ,} \qquad
 \xymatrix{\hF F \hF \ar[rr]^-{\hev \hF}\ar[d]_{\hF \ev} && B\hF \ar[d]^{\hlambda} \\
\hF A\ar[rr]^-{\hrho } && \hF \ ,} 
$$
 is called a {\em formal dual} of $F$. An $A$-$B$ bialgebra $F$ together with a formal dual $\hF$ is called a {\em herd functor} if there exists a natural transformation, the {\em shepherd},  
$$
\gamma : F\to F\hF F,
$$
such that
$$
\xymatrix{ & F  \ar[dd]^\gamma \ar[dl]_{F\eta_B}\ar[dr]^{\eta_A F}& \\
FB & & AF  \ ,  \\
& F \hF F  \ar[ul]^{F\hev}\ar[ur]_{\ev F} &} \qquad
\xymatrix{F \ar[rr]^-{\gamma}\ar[dd]_{\gamma} && F\hF F\ar[dd]^{F\hF \gamma} \\ && \\
F\hF F \ar[rr]^-{\gamma F\hF} && {F\hF F \hF F}\ .} 
$$
Dually, one defines a {\em coherd functor} as a $C$-$D$ bicoalgebra  functor $F: \fS\to \fR$ 
of two comonads $C: \fR\to \fR$ and $D: \fS\to \fS$ with a companion $D$-$C$ bialgebra
functor 
$\bar{F} : \fR\to \fS$ together with a natural transformation $\chi: F\bar{F} F\to F$ satisfying axioms dual to the ones for a herd functor. 

Take $\fR = \fS = \mathbf{Set}$, fix a set $X$ and consider the $X$-representable 
functor $A=B=F=\hF = \Map(X,-)$. Since all the functors appearing in this example are representable, by the Yoneda lemma all the natural transformations between them are determined by suitable functions (elements of $\Map(Y,Z)$ for suitable sets $Y$ and $Z$). For example, the multiplication $m_A : \Map(X, \Map(X, -)) \cong \Map (X\times X, -) \to \Map (X,-)$ is determined by a mapping $\delta : X\to X\times X$ so that $m_A = \Map(\delta, -)$. Similarly, the unit is determined by the (only possible) function $X\to \{*\}$. The only choice for $\delta$ that makes $\Map(X,-)$ a monad is the diagonal mapping
$
\delta: x\mapsto (x,x).
$
Set
$$
\lambda = \varrho = \hlambda = \hrho = \ev =\hev = \Map(\delta, -).
$$
 A shepherd  $\gamma$ is determined by a function $\chi : X\times X\times X\to X$.
 In terms of the mapping $\chi$, the triangle and square diagrams for $\gamma$ read,
  for all $x_i \in X$, $i=1,\ldots, 5$,
$$
\chi(x_1,x_1,x_2) = x_2=\chi(x_2,x_1,x_1), 
$$
$$
\chi(\chi(x_1,x_2,x_3),x_4,x_5) = \chi(x_1,x_2,\chi(x_3,x_4,x_5)).
$$
Thus $\Map(X,-)$ is a herd functor on $\mathbf{Set}$ (with the formal dual and bicoalgebra structures described above) if and only if $X$ is a herd; see \cite[page 170]{Pru:the},  \cite[page 202, footnote]{Bae:ein}. This example justifies the choice of terminology.

Next take $\fR = \M_R$ and $\fS = \M_S$, the categories of right modules of rings $R$ and $S$ respectively, and  consider monads  $-\ot_R A$, $-\ot_S B$, for an $R$-ring $A$ and an $S$-ring $B$. Take $F$ to be the tensor functor $-\ot_S T$ (for an $S$-$R$ bimodule $T$). Then $F$ is a bialgebra of the above monads if and only if $T$ is a $B$-$A$ bimodule. Furthermore, a functor $\widehat{F}= -\ot_R\hT$ is a formal dual of $F$ if and only if $\hT$ is a formal dual of $T$. Finally, $T$ is a herd $B$-$A$ bimodule if and only if $-\ot_ST$ is a herd functor (with the formal dual, monads, etc.\ as specified above).

\section*{Acknowledgements} 
We are grateful to Zoran \v Skoda for inspiring comments. 
The first author would like to thank Gabriella B\"ohm for a discussion. 
The second author thanks the Fund for Scientific Research--Flanders
(Belgium) (F.W.O.--Vlaanderen) for a Postdoctoral Fellowship.


\begin{thebibliography}{99}{} 

\bibitem{AndFul:rin} F.W.\ Anderson and K.R.\  Fuller, {\em Rings and Categories of
	Modules}, Springer, Berlin (1974).


\bibitem{Bae:ein} R.\ Baer, Zur Einf\"uhrung des Scharbegriffs, {\em J.\ Reine Angew.\ Math.} 160:199--207, 1929.

\bibitem{BohBrz:pre}  G.\ B\"ohm and T.\ Brzezi\'nski, {Pre-torsors and equivalences},
{\em J.\ Algebra} 317: 544-580, 2007. Erratum {\em J.\ Algebra} 319: 1339-40, 2007.

\bibitem{BohVer:Mor} G.\ B\"ohm and J.\ Vercruysse, {Morita theory for coring extensions and cleft bicomodules,} {\em Adv.\ Math.} 209:611--648, 2007.

\bibitem{BohVer:firm} G.\ B\"ohm and J.\ Vercruysse, {Morita theory of comodules}, preprint 
\href{http://arxiv.org/abs/0710.1017}{arXiv:0710.1017}

  \bibitem{Brz:str} T.\ Brzezi\'nski, The structure of corings. Induction 
  functors, 
  Maschke-type theorem, and Frobenius and 
          Galois-type properties. {\em Alg.\ 
       Rep.\ Theory}, 5: 389--410, 2002. 
       
 \bibitem{Brz:Gal} T.\ Brzezi\'nski, 
{Galois comodules,}
{\em J.\ Algebra} 290:503--537, 2005.
       
\bibitem{BrzWis:cor} T.\ Brzezi\'nski and R.\ Wisbauer, {\em Corings and Comodules},  
        Cambridge University Press, Cambridge (2003). Erratum: http://www-maths.swan.ac.uk/staff/tb/Corings.htm

\bibitem{Cae:blue}
S. Caenepeel,
{\em Brauer Groups, Hopf Algebras and Galois Theory},
{K-monographs in Mathematics}, {\bf 4}, Kluwer Academic Publishers, 
Dordrecht (1998).

\bibitem{CMZ}
S. Caenepeel, G. Militaru and Shenglin Zhu,
{\em Frobenius and Separable Functors for Generalized Module Categories
and Nonlinear Equations},
 Lect. Notes in Math., Springer Verlag, Berlin (2002).   


\bibitem{ElKGom:com} L.\ El Kaoutit and  J.\ G\'omez-Torrecillas,
 Comatrix corings: Galois corings, descent theory, and a structure
theorem for cosemisimple corings. {\em Math.\ Z.}, 244:887--906,  2003.

\bibitem{Fem:PhD} B.\ Femi\'c, {\em Azumaya Corings, Braided Hopf-Galois Theory and Brauer Groups}, PhD thesis, Universidad de Almer\'ia and Vrije Universiteit Brussel (2008).

\bibitem{GTV:firm}
J.~G{\'o}mez-Torrecillas and J.~Vercruysse, Comatrix corings and {G}alois
  comodules over firm rings, {\em Algebr.\ Represent.\ Theory}, {10}: 271--306,
  2007.

\bibitem{Gru:tor} C.\ Grunspan, {Quantum torsors}, {\em J.\ Pure Appl.\ Alg.}
184:229--255, 2003.

\bibitem{Hobst:PhD} D.\ Hobst, {\em Antipodes in the Theory of Noncommutative
  Torsors}, PhD thesis Ludwig-Maximilians Universit\"at M\"unchen (2004). 
 
 \bibitem{Joh:her} P.T.\ Johnstone, The `closed subgroup theorem' for localic herds and pregroupoids, {\em J.\ Pure Appl.\ Alg.}
70:97--106, 1991.

  
\bibitem{Mac:gen} K.C.H.\ Mackenzie, {\em General Theory of Lie Groupoids and Lie Algebroids}, Cambridge University Press, Cambridge (2005).
  
\bibitem{Pru:the} H.\ Pr\"ufer, Theorie der Abelschen Gruppen. I.\ Grundeigenschaften, {\em Math.\ Z.} 20:165--187, 1924.

\bibitem{Row:rin}
L.H.\ Rowen,  {\em Ring Theory, {V}ol.\ {I}}, Academic Press, Boston (1988).

\bibitem{Sch:big} P.\ Schauenburg, { Hopf bi-Galois extensions},
{\em Commun.\ Algebra} 24:3797--3825, 1996.

\bibitem{Sch:tor} P.\ Schauenburg, {Quantum torsors with fewer axioms},
  Preprint arXiv: math.QA/0302003, 2003. 
  
\bibitem{Sch:Hop} P.\ Schauenburg, {Hopf-Galois and bi-Galois extensions},
  in 
{\em Galois theory, Hopf algebras, and semiabelian categories}, Fields Inst.\
  Commun., 43, Amer.\ Math.\ Soc., Providence, RI, (2004), pp.\ 469--515. 

\bibitem{Sch:rep} H.-J.\ Schneider, {Representation theory of Hopf-Galois extensions}, {\em Israel J.\ Math.} 72:196--231, 1990. 

\bibitem{Sko:hea} Z.\  \v Skoda, Quantum heaps, cops and heapy categories,
{\em Mathematical Communications} 12:1--9, 2007; arXiv: math.QA/0701749.


\bibitem{V:equiv}
J. Vercruysse, {Equivalences between categories of modules and categories of comodules}, to appear in {\em Acta Math. Sin. (Engl. Ser.)}.

\bibitem{Wis:Gal}
R.\  Wisbauer,  {On Galois comodules,} {\em Comm.\ Algebra} 34:2683--2711, 2006.



\end{thebibliography}
\end{document}